
\documentclass[12pt,a4paper]{article}%
\usepackage[utf8]{inputenc}
\usepackage{hyperref}
\usepackage{amsmath}
\usepackage{amsfonts}
\usepackage{amssymb}
\usepackage{graphicx}
\usepackage[normalem]{ulem}
\usepackage{color}%
\setcounter{MaxMatrixCols}{30}
%TCIDATA{OutputFilter=latex2.dll}
%TCIDATA{Version=5.50.0.2960}
%TCIDATA{CSTFile=40 LaTeX article.cst}
%TCIDATA{Created=Sunday, June 28, 2020 18:26:42}
%TCIDATA{LastRevised=Tuesday, April 02, 2024 20:57:22}
%TCIDATA{<META NAME="GraphicsSave" CONTENT="32">}
%TCIDATA{<META NAME="SaveForMode" CONTENT="1">}
%TCIDATA{BibliographyScheme=Manual}
%TCIDATA{<META NAME="DocumentShell" CONTENT="Standard LaTeX\Blank - Standard LaTeX Article">}
%BeginMSIPreambleData
\providecommand{\U}[1]{\protect\rule{.1in}{.1in}}
%EndMSIPreambleData
\newtheorem{theorem}{Theorem}

\newtheorem{corollary}[theorem]{Corollary}

\newtheorem{lemma}[theorem]{Lemma}

\newtheorem{problem}[theorem]{Problem}
\newtheorem{proposition}[theorem]{Proposition}

\newtheorem{observation}[theorem]{Observation}

\newenvironment{proof}[1][Proof]{\noindent\textbf{#1.} }{\ \hfill \rule{0.5em}{0.5em}\bigskip}
\graphicspath{{D:/Dropbox/Riste-Sedlar/ModularP/Slike/}}

\textwidth=16cm
\hoffset=-1.2cm
\voffset=-2.cm
\textheight=23cm
\begin{document}

\title{Domination number of modular product graphs}
\author{Sergio Bermudo$^{a}$, Iztok Peterin$^{b,c}$, Jelena Sedlar$^{d,e}$, Riste
\v{S}krekovski$^{e,f}$\\{\small $^{a}$ \textit{Department of Economics, Quantitative Methods and
Economic History, }}\\{\small \textit{Pablo de Olavide University, Carretera de Utrera Km. 1,
ES-41013 Seville, Spain}}\\[0.1cm] {\small $^{b}$ \textit{University of Maribor, FEECS, Maribor, Slovenia
}}\\[0.1cm] {\small $^{c}$ \textit{Institute of Mathematics, Physics and
Mechanics, Ljubljana, Slovenia }}\\[0.1cm] {\small $^{d}$ \textit{University of Split, Faculty of civil
engineering, architecture and geodesy, Split, Croatia }}\\[0.1cm] {\small $^{e}$ \textit{Faculty of Information Studies, Novo Mesto,
Slovenia}}\\[0.1cm] {\small $^{f}$ \textit{University of Ljubljana, FMF, Ljubljana,
Slovenia }}}
\maketitle

\begin{abstract}
The modular product $G\diamond H$ of graphs $G$ and $H$ is a graph on vertex
set $V(G)\times V(H)$. Two vertices $(g,h)$ and $(g^{\prime},h^{\prime})$ of
$G\diamond H$ are adjacent if $g=g^{\prime}$ and $hh^{\prime}\in E(H)$, or
$gg^{\prime}\in E(G)$ and $h=h^{\prime}$, or $gg^{\prime}\in E(G)$ and
$hh^{\prime}\in E(H)$, or (for $g\neq g^{\prime}$ and $h\neq h^{\prime}$)
$gg^{\prime}\notin E(G)$ and $hh^{\prime}\notin E(H)$. A set $D\subseteq V(G)$
is a dominating set of $G$ if every vertex outside of $D$ contains a neighbor
in $D$. A set $D\subseteq V(G)$ is a total dominating set of $G$ if every
vertex of $G$ contains a neighbor in $D$. The domination number $\gamma(G)$
(resp. total domination number $\gamma_{t}(G)$) of $G$ is the minimum
cardinality of a dominating set (resp. total dominating set) of $G$. In this
work we give several upper and lower bounds for $\gamma(G\diamond H)$ in terms
of $\gamma(G),$ $\gamma(H)$, $\gamma_{t}(\overline{G})$ and $\gamma
_{t}(\overline{H})$, where $\overline{G}$ is the complement graph of $G$.
Further, we fully describe graphs where $\gamma(G\diamond H)=k$ for
$k\in\{1,2,3\}$. Several conditions on $G$ and $H$ under which $\gamma
(G\diamond H)$ is at most $4$ and $5$ are also given. A new type of
simultaneous domination $\bar{\gamma}(G)$, defined as the smallest number of
vertices that dominates $G$ and totally dominates the complement of $G,$
emerged as useful and we believe it could be of independent interest. We
conclude the paper by proposing few directions for possible further research.

\end{abstract}

\textit{Keywords:} domination number; total domination number; modular product.

\textit{AMS Subject Classification numbers:} 05C69; 05C76.

\section{Introduction}

Any product of two graphs $G$ and $H$ has a vertex set $V(G)\times V(H)$, and
the set of edges of a graph product can be defined in many different ways. One
can consider three objects for the definition of an edge in a product:
consider either a vertex or an edge or a non-edge in one factor and combine it
with the same objects in the other factor. This yields eight different
possibilities (notice that vertex by vertex must be ignored to avoid loops) to
have an edge or a non-edge in a product, which gives $2^{8}=256$ different
graph products, see \cite{HaIK, Imri1}.

Some of them are rather ``uninteresting'', but over the years four of
them---Cartesian, direct, strong and lexicographic---gain a special status and
we call them standard products. All four of them are associative, but the
lexicographic is not commutative. All in all, there are exactly ten products
that are both associative and commutative and we can join them into pairs
together with their complementary products. Six of them represent either a
standard product or its complementary product, two of them are trivial product
(no edges) and complete product (all edges) and the remaining two products are
modular product and its complementary product.

Modular product did not gain too much attention over the years from the
community. Seminal paper on modular product is due to Imrich \cite{Imri1} and
contains a discussion on algebraic properties. Later one can find a work on
$L(2,1)$-labelings of modular products by Shao and Solis-Oba \cite{shao-solis}%
. Another work on modular product brings an upper bound on broadcast
dominating number by Sen and Kola \cite{SeKo}. A recent manuscript \cite{KKPY}
by Kang et. al. contains a distance formula and a method to obtain a strong
metric dimension of a modular product.

It is convenient to warn that there exits a confusion with the name modular
product. While we follow the book \cite{HaIK}, several authors use the name
modular product for a product that is called the weak modular product in
\cite{HaIK} and the direct-co-direct product in \cite{KePe,KePe1}. In any case
we recommend caution.

Domination problems have a long history with respect to graph products,
starting with the Cartesian product and the long standing Vizing conjecture,
which claims that $\gamma(G\Box H)\geq\gamma(G)\gamma(H)$ for any two graphs
$G$ and $H$. Despite several attempts and different approaches to this
conjecture, it is still open. The last survey about this conjecture was
written by Bre\v{s}ar et. al. \cite{BrDoKl} and since then one can find some
other attempts to it, see \cite{Bres1,Bres2,BHHK,PiPS,Zerb}, by different authors.

Among other products, the direct product gained the biggest attention with
respect to the domination number, see Bre\v{s}ar et. al. \cite{BrKR} for an
upper bound and Meki\v{s} \cite{Meki} for a lower bound on the domination
number of the direct product. Nowakowski and Rall \cite{NoRa} used another
approach to several associative graph products, they studied several of them
with respect to some chromatic, independent and dominating parameters. In
particular, a subsection of \cite{NoRa} is devoted to the domination number of
modular product, which they call equivalence product.

In the present work we consider $\gamma(G\diamond H)$ from several
perspectives. In the next section we first settle the notation. This is
followed by a section that relates $\gamma(G\diamond H)$ to some other graph
parameters and yields several exact values of $\gamma(G\diamond H)$, that will
be useful for later results. In the fourth section we characterize graphs with
$\gamma(G\diamond H)=k$ for $k\in\{1,2,3\}$. We finish the work with some
conclusions and open problems.

\section{Preliminaries}

We consider only simple, undirected and finite graphs. For a graph $G$, we
denote by $\overline{G}$ the \emph{complement} of $G$, that is, the graph on
the same vertex set $V(G)$ such that, for any two different vertices $u$ and
$v$ of $G,$ $uv\in E(\overline{G})$ if and only if $uv\notin E(G)$. The
\emph{distance} between two vertices $u,v\in V(G)$ is the minimum number of
edges on any path in $G$ that starts in $u$ and ends in $v$, and is denoted by
$d_{G}(u,v)$. If such a path does not exist, then we have $d_{G}(u,v)=\infty$.
The maximum distance among any pair of vertices of $G$ is the \emph{diameter}
of $G$, and it is denoted by $\mathrm{diam}(G)$.

As usual, $N_{G}(v)$ and $N_{G}[v]$ denote the open and the closed
neighborhood, respectively, of a vertex $v$ of $G$, it means, $N_{G}(v)=\{u\in
V(G):uv\in E(G)\}$ and $N_{G}[v]=N_{G}(v)\cup\{v\}$. Vertex $v$ is called a
\emph{universal vertex} when $N_{G}[v]=V(G)$ and \emph{isolated vertex} when
$N_{G}[v]=\{v\}$. Given a set of vertices $D\subseteq V(G)$ and a vertex $v\in
D$, the \emph{open neighborhood} of $D$ is $N_{G}(D)=\bigcup_{u\in D}N_{G}%
(u)$, the \emph{closed neighborhood} of $D$ is $N_{G}[D]=N_{G}(D)\cup D$, and
the \emph{private neighbors} of $v\in D$ are the vertices in the set
$\mathrm{pr}[v,D]=N_{G}[v]\setminus N_{G}[D\backslash\{v\}]$. A vertex $u$ is
said to be \emph{dominated} by a vertex $v$ if $u\in N[v].$

A set $D\subseteq V(G)$ is a \emph{dominating set} of $G$ if $N_{G}[D]=V(G)$.
The \emph{domination number} $\gamma(G)$ of $G$ is the minimum cardinality of
a dominating set of $G$. Every dominating set of cardinality $\gamma(G)$ is
called a $\gamma(G)$\emph{-set}. If further $\{N_{G}[v]:v\in D\}$ is a
partition of $V(G)$, then $G$ is an \emph{efficiently closed dominated} graph,
or an ECD graph for short, and $D$ is an \emph{efficiently closed dominating}
set of $G$, or an ECD set for short. ECD graphs have been largely studied over
the decades, also from the perspective of $1$-perfect codes, see \cite{KPY}
and the references there in. A set $D\subseteq V(G)$ is a \emph{total
dominating set} of a graph $G$ if $N_{G}(D)=V(G)$. The \emph{total domination
number} $\gamma_{t}(G)$ of $G$ is the minimum cardinality of a total
dominating set of $G$ or infinite if such a set does not exist. Every total
dominating set of cardinality $\gamma_{t}(G)$ is called a $\gamma_{t}%
(G)$\emph{-set}. The \emph{packing number} $\rho(G)$ of a graph $G$ is the
maximum number of vertices of $G$ such that their closed neighborhoods are
pairwise disjoint.

We will use a new concept for a graph $G$ and its complement $\overline{G}$.
Let $D$ be at the same time a dominating set of $G$ and a total dominating set
of $\overline{G}$, then we say that $D$ is a \emph{simultaneously dominating
and complement total dominating set} (SDCTD set for short) of $G$. The minimum
cardinality of SDCTD set of $G$ is denoted by $\bar{\gamma}(G)$ and is called
the SDCTD number of $G$. Clearly, $\bar{\gamma}(G)$ exists if there are no
isolated vertices in $\overline{G}$, which means no universal vertices in $G$.
To the best of our knowledge, this parameter has not been studied yet.

Given two factors $G$ and $H$, a pair of vertices $(g,h)$ and $(g^{\prime
},h^{\prime})$ in $V(G)\times V(H)$ forms an edge of the

\begin{itemize}
\item \emph{Cartesian product} $G\Box H$, if $g=g^{\prime}$ and $hh\in E(H)$,
or $gg^{\prime}\in E(G)$ and $h=h^{\prime}$;

\item \emph{direct product} $G\times H$, if $gg^{\prime}\in E(G)$ and
$hh^{\prime}\in E(H)$.
\end{itemize}

\noindent Now, we define the edges of the \emph{modular product} $G\diamond H$
as
\[
E(G\diamond H)=E(G\Box H)\cup E(G\times H)\cup E(\overline{G}\times
\overline{H}).
\]
The three sets that define $E(G\diamond H)$ are pairwise disjoint. Therefore,
we call \emph{Cartesian edges} to the edges of $E(G\diamond H)$ that belong to
the Cartesian product, \emph{direct edges} to those which belong to $G\times
H$, and \emph{co-direct edges} to the ones in the direct product $\overline
{G}\times\overline{H}$ of complements of $G$ and $H$. Recall that the edge set
of the \emph{strong product} $G\boxtimes H$ consists of the Cartesian edges
and the direct edges between $G$ and $H$. Besides, let us make the following observation.

\begin{observation}
\label{Obs_adjacency}A vertex $(g_{1},h_{1})$ is dominated by $(g_{2},h_{2})$
in $G\diamond H$ if and only if either $g_{1}\in N_{G}[g_{2}]$ and $h_{1}\in
N_{H}[h_{2}]$ or $g_{1}\notin N_{G}[g_{2}]$ and $h_{1}\notin N_{H}[h_{2}]$.
\end{observation}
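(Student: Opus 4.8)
The plan is to prove the equivalence by unwinding the definition $E(G\diamond H)=E(G\Box H)\cup E(G\times H)\cup E(\overline{G}\times\overline{H})$ and splitting into cases according to whether the first coordinates $g_1,g_2$ (resp.\ the second coordinates $h_1,h_2$) coincide. Recall that $(g_1,h_1)$ being dominated by $(g_2,h_2)$ means $(g_1,h_1)\in N_{G\diamond H}[(g_2,h_2)]$, i.e.\ either the two vertices are equal or they are joined by an edge of $G\diamond H$. I would also record the elementary remark that for vertices $a\neq b$ of a graph $F$ one has $a\in N_F[b]$ iff $ab\in E(F)$, while $a\in N_F[a]$ always holds; this is what allows the closed neighbourhoods on the right-hand side to absorb the "equal coordinate" situations.

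First I would treat the diagonal case $g_1=g_2$ and $h_1=h_2$: here $(g_1,h_1)=(g_2,h_2)$ is dominated, and simultaneously $g_1\in N_G[g_2]$ and $h_1\in N_H[h_2]$, so both sides hold. Next come the two mixed cases $g_1=g_2$, $h_1\neq h_2$ (and symmetrically $g_1\neq g_2$, $h_1=h_2$): a possible edge between $(g_1,h_1)$ and $(g_2,h_2)$ can only be a Cartesian edge, hence the pair is adjacent iff $h_1h_2\in E(H)$, i.e.\ iff $h_1\in N_H[h_2]$; since $g_1\in N_G[g_2]$ holds automatically and $g_1\notin N_G[g_2]$ fails, the right-hand side also reduces exactly to $h_1\in N_H[h_2]$, so the two sides agree.

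Finally, the generic case $g_1\neq g_2$ and $h_1\neq h_2$: no Cartesian edge is possible, so $(g_1,h_1)(g_2,h_2)$ is an edge iff it is a direct edge ($g_1g_2\in E(G)$ and $h_1h_2\in E(H)$) or a co-direct edge ($g_1g_2\notin E(G)$ and $h_1h_2\notin E(H)$); translating $g_1g_2\in E(G)\Leftrightarrow g_1\in N_G[g_2]$ and $h_1h_2\in E(H)\Leftrightarrow h_1\in N_H[h_2]$ (together with their negations, valid since $g_1\neq g_2$ and $h_1\neq h_2$) turns this condition into precisely the stated disjunction. Collecting the four cases yields the claim.

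There is no real obstacle here: the statement is essentially a reformulation of the definition of $G\diamond H$. The only thing to be careful about is the bookkeeping with closed neighbourhoods versus edges on the diagonal and in the mixed cases, and noting that the "either $\ldots$ or $\ldots$" on the right-hand side is an inclusive disjunction whose two alternatives cannot hold simultaneously for a fixed pair — a fact mirrored by the pairwise disjointness of the three edge classes of $G\diamond H$ recorded just before the observation.
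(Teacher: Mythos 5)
Your case analysis is correct and complete: the four cases according to whether $g_1=g_2$ and/or $h_1=h_2$ exhaust all possibilities, and in each one you correctly translate adjacency in $G\diamond H$ (Cartesian, direct, or co-direct edge, or equality) into the closed-neighbourhood condition on the right-hand side. The paper states this as an Observation with no proof at all, treating it as an immediate reformulation of the definition, so there is no argument to compare against; your write-up simply supplies the routine verification the authors chose to omit, and it does so accurately, including the slightly delicate point that in the mixed cases the clause $g_1\notin N_G[g_2]$ (resp.\ $h_1\notin N_H[h_2]$) is automatically false so the disjunction collapses to the Cartesian-edge condition.
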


The last of the standard products is the \emph{lexicographic product} $G\circ
H$, where $(g,h)(g^{\prime},h^{\prime})$ is an edge if $gg^{\prime}\in E(G)$
or ($g=g^{\prime}$ and $hh^{\prime}\in E(H)$). Clearly, lexicographic product
is not commutative because its edge set is not defined symmetrically.
Different products of particular graphs can be isomorphic, the following
simple connection is important in this work:
\begin{equation}
\label{complete}G\boxtimes K_{t}\cong G\circ K_{t}\cong G\diamond K_{t},
\end{equation}
where $K_{t}$ is the complete graph with $t$ vertices.

%The \emph{projections} of $D\subset V(G)\times V(H)$ to factor $G$
%or $H$ are the maps $p_{G}:V(G)\times V(H)\rightarrow V(G)$ and
%$p_{H}:V(G)\times V(H)\rightarrow V(H)$, respectively, defined by
%$p_{G}(g,h)=g$ and $p_{H}(g,h)=h$, respectively. The set
%$G^{h}=\{(g,h):g\in V(G)\}$ is a $G$-\emph{fiber through} $h$ and
%$H^{g}=\{(g,h):h\in V(H)\}$ is an $H$-\emph{fiber through} $g$. A
%subgraph of $G\diamond H$ (and also of $G\Box H$) induced by $G^{h}$
%or $H^{g}$ is isomorphic to $G$ and $H$, respectively.

\section{Some bounds and exact values}

\label{sec1}

In this section we give some bounds and exact values for the domination number
of a modular product.

\paragraph{Lower bound.}

We start with a lower bound of $\gamma(G\diamond H)$ using the domination
number in one factor and the total domination number in the complement of the
other factor. This is not a big surprise due to the co-direct edges of modular
product. For this we need the following independent result, where by $[k]$ we
denote the set $\{1,\ldots,k\}$ here and throughout the paper.

\begin{proposition}
\label{nodom} Let $G$ and $H$ be two graphs. A set $D=\{(g_{1},h_{1}%
),\ldots,(g_{k},h_{k})\}$ is a dominating set in $G\diamond H$ if and only if
for every $I\subseteq\lbrack k]$ there is no vertex $(g,h)$ in $G\diamond H$
such that $N_{G}[g]\cap\{g_{1},\ldots,g_{k}\}=\{g_{i}:i\in I\}$ and
$N_{H}[h]\cap\{h_{1},\ldots,h_{k}\}=\{h_{i}:i\in\lbrack k]\setminus I\}$.
\end{proposition}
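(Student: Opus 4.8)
The plan is to prove the contrapositive in both directions, reformulating "$D$ is not a dominating set" as "there is a vertex $(g,h)$ not dominated by any element of $D$" and then translating the non-domination condition through Observation~\ref{Obs_adjacency}. First I would fix notation: write $G_D=\{g_1,\ldots,g_k\}$ and $H_D=\{h_1,\ldots,h_k\}$ for the projections of $D$ (allowing repetitions in the indexed families, but the relevant objects are the intersections with closed neighborhoods, so sets suffice). The key observation to unpack is that, by Observation~\ref{Obs_adjacency}, a vertex $(g,h)$ is \emph{not} dominated by $(g_i,h_i)$ precisely when exactly one of the two conditions ``$g\in N_G[g_i]$'' and ``$h\in N_H[h_i]$'' holds, i.e.\ when $g\in N_G[g_i]$ and $h\notin N_H[h_i]$, or $g\notin N_G[g_i]$ and $h\in N_H[h_i]$.

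Next I would carry out the main translation. Suppose $(g,h)$ is dominated by no vertex of $D$. Define $I=\{i\in[k]: g\in N_G[g_i]\}=\{i\in[k]:g_i\in N_G[g]\}$, using the symmetry of adjacency and the fact that $g\in N_G[g_i]\iff g_i\in N_G[g]$. For each $i\in I$ we have $g\in N_G[g_i]$, so non-domination forces $h\notin N_H[h_i]$; for each $i\in[k]\setminus I$ we have $g\notin N_G[g_i]$, so non-domination forces $h\in N_H[h_i]$. In terms of closed neighborhoods of $g$ and $h$ this says exactly $N_G[g]\cap G_D=\{g_i:i\in I\}$ and $N_H[h]\cap H_D=\{h_i:i\in[k]\setminus I\}$, which is the configuration the proposition forbids. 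This establishes that if $D$ is not dominating, then some such $I$ and $(g,h)$ exist.

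For the converse direction I would simply reverse this reasoning: given $I\subseteq[k]$ and a vertex $(g,h)$ with $N_G[g]\cap G_D=\{g_i:i\in I\}$ and $N_H[h]\cap H_D=\{h_i:i\in[k]\setminus I\}$, I claim $(g,h)$ is dominated by no $(g_i,h_i)\in D$. Indeed, for $i\in I$ we have $g_i\in N_G[g]$ (equivalently $g\in N_G[g_i]$) but $h_i\notin N_H[h]$, so by Observation~\ref{Obs_adjacency} the vertex $(g_i,h_i)$ does not dominate $(g,h)$; for $i\notin I$ we have $g\notin N_G[g_i]$ but $h\in N_H[h_i]$, and again Observation~\ref{Obs_adjacency} gives non-domination. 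Hence $(g,h)\notin N_{G\diamond H}[D]$ and $D$ is not a dominating set.

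I do not expect a serious obstacle here; the proposition is essentially a bookkeeping restatement of Observation~\ref{Obs_adjacency} applied to all $k$ potential dominators at once. The one mild subtlety to handle carefully is the possibility that $(g,h)$ itself lies in $D$ or that the families $(g_i)$, $(h_i)$ have repeated entries: one should check that even when $(g,h)=(g_j,h_j)$ for some $j$, the index $j$ falls into $I$ (since $g_j\in N_G[g_j]=N_G[g]$) and simultaneously the condition $N_H[h]\cap H_D=\{h_i:i\in[k]\setminus I\}$ would force $h_j\notin N_H[h_j]$, a contradiction — so that degenerate case simply cannot arise, consistent with the fact that a vertex of $D$ is trivially dominated. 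Making this remark (or observing that $I$ always contains every $j$ with $(g_j,h_j)$ ``close'' in both coordinates) keeps the argument clean. Everything else is a direct unwinding of definitions.
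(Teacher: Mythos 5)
Your proof is correct and takes essentially the same route as the paper's: both are direct unwindings of Observation~\ref{Obs_adjacency}, with your contrapositive framing (building $I=\{i\in[k]:g\in N_G[g_i]\}$ from an undominated vertex) being the mirror image of the paper's case analysis on the index $i$ that dominates $(g,h)$. Your closing remark about $(g,h)\in D$ and repeated coordinates is a harmless extra; the paper's own proof treats this with the same (implicit) level of care.
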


\begin{proof}
The set $D$ is a dominating set in $G\diamond H$ if and only if for every
$(g,h)$ of $G\diamond H$ there exists $i\in\lbrack k]$ such that either
$(g,h)=(g_{i},h_{i})$ or $(g,h)$ is adjacent to $(g_{i},h_{i}).$ According to
Observation \ref{Obs_adjacency}, this is equivalent to either $g_{i}\in
N_{G}[g]$ and $h_{i}\in N_{H}[h]$ or $g_{i}\not \in N_{G}[g]$ and
$h_{i}\not \in N_{H}[h].$ Let us consider any set $I\subseteq\lbrack k]$ and
any vertex $(g,h).$ Let $i\in\lbrack k]$ be such an index that $g_{i}\in
N_{G}[g]$ and $h_{i}\in N_{H}[h]$ or $g_{i}\not \in N_{G}[g]$ and
$h_{i}\not \in N_{H}[h].$ 

Assume first that $g_{i}\in N_{G}[g]$ and $h_{i}\in N_{H}[h].$ Notice that
either $i\in I$ or $i\in\lbrack k]\setminus I.$ If $i\in I,$ then $h_{i}\in
N_{H}[h]$ implies $N_{H}[h]\cap\{h_{1},\ldots,h_{k}\}\not =\{h_{i}:i\in\lbrack
k]\setminus I\}.$ On the other hand, if $i\in\lbrack k]\setminus I,$ then
$g_{i}\in N_{G}[g]$ implies $N_{G}[g]\cap\{g_{1},\ldots,g_{k}\}\not =%
\{g_{i}:i\in I\}.$

Assume next that $g_{i}\not \in N_{G}[g]$ and $h_{i}\not \in N_{H}[h].$ If
$i\in I,$ then $g_{i}\not \in N_{G}[g]$ implies $N_{G}[g]\cap\{g_{1}%
,\ldots,g_{k}\}\not =\{g_{i}:i\in I\}.$ And if $i\in\lbrack k]\setminus I,$
then $h_{i}\not \in N_{H}[h]$ implies $N_{H}[h]\cap\{h_{1},\ldots
,h_{k}\}\not =\{h_{i}:i\in\lbrack k]\setminus I\}.$

Either way, we have obtained that for a set $I\subseteq\lbrack k]$ there is no
vertex $(g,h)$ in $G\diamond H$ such that $N_{G}[g]\cap\{g_{1},\ldots
,g_{k}\}=\{g_{i}:i\in I\}$ and $N_{H}[h]\cap\{h_{1},\ldots,h_{k}%
\}=\{h_{i}:i\in\lbrack k]\setminus I\}$.

\smallskip

Let us now prove the other direction of the equivalence. Assume that for every
$I\subseteq\lbrack k]$ there is no vertex $(g,h)$ in $G\diamond H$ such that
$N_{G}[g]\cap\{g_{1},\ldots,g_{k}\}=\{g_{i}:i\in I\}$ and $N_{H}[h]\cap
\{h_{1},\ldots,h_{k}\}=\{h_{i}:i\in\lbrack k]\setminus I\}$. This implies that
for every vertex $(g,h)$ of $G\diamond H$ there exists $i\in\lbrack k]$ such
that either $g_{i}\in N_{G}[g]$ and $h_{i}\in N_{H}[h]$ or $g_{i}%
\not \in N_{G}[g]$ and $h_{i}\not \in N_{H}[h],$ and we are done.
\end{proof}

If we take $I=\emptyset$ in the above proposition, we get the following
interesting corollary.

\begin{corollary}
\label{Cor_sufficient}Let $G$ and $H$ be two graphs. If $D=\{(g_{1}%
,h_{1}),\ldots,(g_{k},h_{k})\}$ is a dominating set in $G\diamond H$, then
$\{g_{1},\ldots,g_{k}\}$ is a dominating set in $G$ or $\{h_{1},\ldots
,h_{k}\}$ is a total dominating set in $\overline{H}$.
\end{corollary}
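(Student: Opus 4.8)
The plan is to derive the Corollary directly from Proposition~\ref{nodom} by specializing to $I=\emptyset$. Indeed, if $D=\{(g_1,h_1),\ldots,(g_k,h_k)\}$ is a dominating set in $G\diamond H$, then Proposition~\ref{nodom} tells us that for \emph{every} subset $I\subseteq[k]$ there is no vertex $(g,h)$ with $N_G[g]\cap\{g_1,\ldots,g_k\}=\{g_i:i\in I\}$ and $N_H[h]\cap\{h_1,\ldots,h_k\}=\{h_i:i\in[k]\setminus I\}$. Taking $I=\emptyset$, this says there is no vertex $(g,h)$ with $N_G[g]\cap\{g_1,\ldots,g_k\}=\emptyset$ and $N_H[h]\cap\{h_1,\ldots,h_k\}=\{h_1,\ldots,h_k\}$.

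The second step is to unpack what this impossibility statement means. Suppose, for contradiction, that $\{g_1,\ldots,g_k\}$ is \emph{not} a dominating set of $G$; then there exists $g\in V(G)$ with $g\notin N_G[g_i]$ for all $i$, equivalently $g_i\notin N_G[g]$ for all $i$, i.e. $N_G[g]\cap\{g_1,\ldots,g_k\}=\emptyset$. Suppose also that $\{h_1,\ldots,h_k\}$ is \emph{not} a total dominating set of $\overline{H}$; then there exists $h\in V(H)$ with $h\notin N_{\overline H}(h_i)$ for all $i$, which (using that $\overline{H}$-nonadjacency of distinct vertices is $H$-adjacency, and that $h=h_i$ forces $h\notin N_{\overline H}(h_i)$ trivially) means $h_i\in N_H[h]$ for all $i$, i.e. $N_H[h]\cap\{h_1,\ldots,h_k\}=\{h_1,\ldots,h_k\}$. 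But then $(g,h)$ is exactly the forbidden vertex for $I=\emptyset$, contradicting Proposition~\ref{nodom}. Hence at least one of the two sets has the required property.

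The only mildly delicate point — and the step I would be most careful about — is the translation ``$h$ not totally dominated by $\{h_1,\ldots,h_k\}$ in $\overline H$'' $\iff$ ``$h_i\in N_H[h]$ for every $i$''. Total domination in $\overline H$ uses open neighborhoods $N_{\overline H}(h_i)$, so one must observe that $h\notin N_{\overline H}(h_i)$ holds precisely when either $h=h_i$ or $h h_i\in E(H)$, i.e. precisely when $h_i\in N_H[h]$; conversely $h_i\in N_H[h]$ gives exactly these two cases. This handles both the $h=h_i$ case and the edge case uniformly, and it is what makes the \emph{closed} neighborhood $N_H[h]$ (rather than the open one) appear on the $H$-side, matching the statement of Proposition~\ref{nodom}. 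Once this equivalence is in hand the argument is immediate, so the proof is short.
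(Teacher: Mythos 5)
Your proposal is correct and follows exactly the paper's route: the paper derives this corollary by specializing Proposition~\ref{nodom} to $I=\emptyset$, and your unpacking of the two negations (including the careful translation between $h\notin N_{\overline H}(h_i)$ and $h_i\in N_H[h]$) is precisely the intended argument, just spelled out in more detail than the paper bothers to.
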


Let us now consider a $\gamma(G\diamond H)$-set $D=\{(g_{1},h_{1}%
),\ldots,(g_{k},h_{k})\}.$ If $\gamma(G)>\gamma(G\diamond H)$, then
$\{g_{1},\ldots,g_{k}\}$ is not a dominating set in $G$, so Corollary
\ref{Cor_sufficient} gives $\gamma_{t}(\overline{H})\leq\gamma(G\diamond H)$.
The similar argument can be applied to $\gamma(H)$ and $\gamma_{t}%
(\overline{G}),$ thus the above auxiliary results yield the following theorem.

\begin{theorem}
\label{Lobound} For any two graphs $G$ and $H$ we have
\[
\max\{\min\{\gamma(G),\gamma_{t}(\overline{H})\},\min\{\gamma(H),\gamma
_{t}(\overline{G})\}\}\leq\gamma(G\diamond H).
\]

\end{theorem}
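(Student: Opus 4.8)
The plan is to derive Theorem \ref{Lobound} directly from Corollary \ref{Cor_sufficient} by a short case analysis on a fixed minimum dominating set of $G\diamond H$. Let $D=\{(g_{1},h_{1}),\ldots,(g_{k},h_{k})\}$ be a $\gamma(G\diamond H)$-set, so $k=\gamma(G\diamond H)$, and write $D_G=\{g_1,\ldots,g_k\}$ and $D_H=\{h_1,\ldots,h_k\}$ for the projections onto the two factors (as multisets collapsed to sets). The whole point is that by Corollary \ref{Cor_sufficient}, $D_G$ is a dominating set of $G$ or $D_H$ is a total dominating set of $\overline H$; and symmetrically, since the modular product is commutative (swapping the roles of the two factors is just an isomorphism), $D_H$ is a dominating set of $H$ or $D_G$ is a total dominating set of $\overline G$. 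So I would first state and invoke this symmetric version of the corollary, noting $|D_G|,|D_H|\le k$.

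Next I would run the two-by-two case distinction. From the first dichotomy: either $\gamma(G)\le|D_G|\le k$, or $\gamma_t(\overline H)\le|D_H|\le k$. From the second: either $\gamma(H)\le|D_H|\le k$, or $\gamma_t(\overline G)\le|D_G|\le k$. Combining the two first alternatives gives $\min\{\gamma(G),\gamma_t(\overline H)\}\le k$ in every case coming from the first dichotomy, and similarly $\min\{\gamma(H),\gamma_t(\overline G)\}\le k$ from the second. Hence both quantities are $\le k=\gamma(G\diamond H)$, and taking their maximum yields
\[
\max\{\min\{\gamma(G),\gamma_{t}(\overline{H})\},\min\{\gamma(H),\gamma_{t}(\overline{G})\}\}\le\gamma(G\diamond H),
\]
which is exactly the claim. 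Actually the cleanest phrasing merges this: from the first dichotomy alone we already get $\min\{\gamma(G),\gamma_t(\overline H)\}\le\gamma(G\diamond H)$, and from the commuted version we get $\min\{\gamma(H),\gamma_t(\overline G)\}\le\gamma(G\diamond H)$; the max of two things each bounded by $\gamma(G\diamond H)$ is bounded by $\gamma(G\diamond H)$.

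There is no real obstacle here — the content is entirely front-loaded into Proposition \ref{nodom} and its corollary, and the theorem is just packaging. The only points requiring a word of care are: (i) confirming $G\diamond H\cong H\diamond G$ so that Corollary \ref{Cor_sufficient} may be applied with the factors interchanged (this follows immediately from the symmetric description of $E(G\diamond H)$ given before Observation \ref{Obs_adjacency}); and (ii) the harmless subtlety that the coordinates $g_i$ (resp. $h_i$) need not be distinct, so $D_G$ and $D_H$ are sets of size at most $k$ rather than exactly $k$ — but since domination and total domination are monotone under enlarging a set (within a fixed graph, a superset of a dominating set is dominating), the inequalities $\gamma(G)\le|D_G|\le k$ etc. are unaffected. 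So the main "work" is simply to organize the four cases and observe they collapse to the two stated minima; I expect the write-up to be only a few lines.
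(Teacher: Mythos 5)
Your proposal is correct and follows essentially the same route as the paper: the authors also take a $\gamma(G\diamond H)$-set, apply Corollary \ref{Cor_sufficient} to get $\min\{\gamma(G),\gamma_t(\overline{H})\}\leq\gamma(G\diamond H)$, and then invoke the symmetric argument for $\min\{\gamma(H),\gamma_t(\overline{G})\}$. Your added remarks on commutativity of $\diamond$ and on possibly repeated coordinates are harmless clarifications of points the paper leaves implicit.
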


The mentioned lower bound is sharp for an arbitrary graph $G$ and $H=K_{n}$,
as we will see later in Corollary \ref{Kn}.

The following simple result, which we are not aware of its existence in the
literature, allows us to simplify the lower bound for $\gamma(G\diamond H)$
given in Theorem \ref{Lobound} even more.

\begin{proposition}
\label{total2} For any graph $H$, $\mathrm{diam}(H)\geq3$ if and only if
$\gamma_{t}(\overline{H})=2$.
\end{proposition}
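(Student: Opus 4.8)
The plan is to prove both implications directly by unpacking the definitions of diameter and of total domination in the complement.

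First I would prove the forward direction: assume $\mathrm{diam}(H)\geq 3$. Pick two vertices $u,v\in V(H)$ with $d_H(u,v)\geq 3$; in particular $u\neq v$, $uv\notin E(H)$, and $u,v$ have no common neighbor in $H$. I claim $D=\{u,v\}$ is a total dominating set of $\overline{H}$. Since $uv\notin E(H)$ we have $uv\in E(\overline H)$, so $u$ dominates $v$ and $v$ dominates $u$ in $\overline H$ (covering the two vertices of $D$ themselves, which total domination requires). Now take any $w\in V(H)\setminus\{u,v\}$. Because $u$ and $v$ have no common neighbor in $H$, $w$ cannot be adjacent to both $u$ and $v$ in $H$, so $w$ is non-adjacent to at least one of them in $H$, hence adjacent to at least one of them in $\overline H$. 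Thus $w$ has a neighbor in $D$ in $\overline H$. This shows $\gamma_t(\overline H)\leq 2$. Since $\mathrm{diam}(H)\geq 3$ also forces $H$ to be non-complete, $\overline H$ has an edge, so $\gamma_t(\overline H)\geq 2$, giving equality.

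Next I would prove the converse: assume $\gamma_t(\overline H)=2$, say $\{u,v\}$ is a total dominating set of $\overline H$. Total domination of the two-element set requires $u$ to have a neighbor in $\{u,v\}$ in $\overline H$, which must be $v$; hence $uv\in E(\overline H)$, i.e. $uv\notin E(H)$, so $d_H(u,v)\geq 2$. Moreover, every $w\in V(H)\setminus\{u,v\}$ has a neighbor in $\{u,v\}$ in $\overline H$, i.e. $w$ is non-adjacent in $H$ to $u$ or to $v$. Equivalently, no vertex $w$ is adjacent in $H$ to both $u$ and $v$; combined with $uv\notin E(H)$, this means $u$ and $v$ have no common closed-neighborhood overlap beyond themselves and no path of length $2$ between them, so $d_H(u,v)\geq 3$ (it could be $\infty$, which still satisfies $\geq 3$). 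Therefore $\mathrm{diam}(H)\geq 3$.

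The only subtlety, and the step to state carefully, is the bookkeeping about total domination needing to cover the vertices of $D$ itself: for a two-vertex total dominating set $\{u,v\}$ of $\overline H$, the condition "$u$ has a neighbor in the set" is exactly what pins down $uv\in E(\overline H)$, and symmetrically. Once that is noted, the rest is a routine translation between "adjacent in $\overline H$" and "non-adjacent in $H$," and between "no common $H$-neighbor of $u,v$ together with $uv\notin E(H)$" and "$d_H(u,v)\geq 3$." No deeper obstacle is expected.
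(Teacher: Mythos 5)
Your proof is correct and follows essentially the same route as the paper: in one direction, two vertices at distance at least $3$ in $H$ form a total dominating set of $\overline{H}$; in the other, a two-vertex total dominating set of $\overline{H}$ forces non-adjacency and no common neighbor in $H$, hence distance at least $3$. Your extra care about the set totally dominating its own two vertices (pinning down $uv\in E(\overline{H})$) and the remark that $\gamma_{t}(\overline{H})\geq 2$ are implicit in the paper's shorter argument.
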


\begin{proof}
On one hand, any set $\{h_{1},h_{2}\}$ with $d_{H}(h_{1},h_{2})\geq3$ is a
total dominating set in $\overline{H}$. On the other hand, if $\{h_{1}%
,h_{2}\}$ is a total dominating set in $\overline{H}$, then $h_{1}$ is not
adjacent to $h_{2}$ in $H$ and there exists no vertex $h_{3}\in V(H)$ such
that both $h_{1}$ and $h_{2}$ are adjacent to $h_{3}$ in $H$. Therefore,
$d_{H}(h_{1},h_{2})\geq3$.
\end{proof}

Plugging the result of Proposition \ref{total2} into the bound of Theorem
\ref{Lobound} immediately yields the following result.

\begin{corollary}
\label{cor1}For any two graphs $G$ and $H$, $\mathrm{diam}(H)=2$ implies
\[
\min\{\gamma(G),3\}\leq\gamma(G\diamond H).
\]

\end{corollary}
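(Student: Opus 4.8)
The plan is to read the desired inequality straight out of Theorem~\ref{Lobound} after feeding it the information supplied by Proposition~\ref{total2}. From Theorem~\ref{Lobound} we keep only the first term of the outer maximum, obtaining the weaker bound $\min\{\gamma(G),\gamma_{t}(\overline{H})\}\leq\gamma(G\diamond H)$. Hence it suffices to prove that the hypothesis $\mathrm{diam}(H)=2$ forces $\gamma_{t}(\overline{H})\geq 3$, with the usual convention that an infinite total domination number counts as being $\geq 3$; indeed, once this is known we get $\min\{\gamma(G),3\}\leq\min\{\gamma(G),\gamma_{t}(\overline{H})\}\leq\gamma(G\diamond H)$, since if $\gamma(G)\leq 3$ both minima equal $\gamma(G)$, and if $\gamma(G)>3$ the left minimum is $3$ while the middle one is $\geq 3$.

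To establish $\gamma_{t}(\overline{H})\geq 3$ I would combine two elementary facts. First, for any graph the total domination number is either infinite or at least $2$: a single vertex can never totally dominate itself because there are no loops, so $\gamma_{t}(\overline{H})\notin\{0,1\}$. Second, Proposition~\ref{total2} states that $\gamma_{t}(\overline{H})=2$ holds precisely when $\mathrm{diam}(H)\geq 3$; therefore, under the assumption $\mathrm{diam}(H)=2$ we have $\gamma_{t}(\overline{H})\neq 2$. Putting the two together rules out $0$, $1$, and $2$, leaving $\gamma_{t}(\overline{H})\geq 3$, as required.

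I do not expect any genuine obstacle here; the argument is a short deduction. The only point that warrants a word of care is the degenerate situation in which $H$ has a universal vertex: then $\overline{H}$ contains an isolated vertex and $\gamma_{t}(\overline{H})=\infty$. This is harmless, since $\infty\geq 3$ and the outer $\min$ with $\gamma(G)$ absorbs it, so the displayed bound still holds. It is also worth noting in passing that the corollary is only substantive when $\gamma(G)\geq 3$; for $\gamma(G)\in\{1,2\}$ it merely reproduces $\gamma(G)\leq\gamma(G\diamond H)$, which already follows directly from Theorem~\ref{Lobound} together with $\gamma_{t}(\overline{H})\geq 2$.
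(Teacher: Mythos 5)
Your argument is correct and is exactly the paper's approach: the paper likewise deduces $\gamma_{t}(\overline{H})\geq 3$ from Proposition~\ref{total2} (together with the trivial fact that a finite total domination number is at least $2$) and plugs this into the first term of the maximum in Theorem~\ref{Lobound}. You have merely spelled out the details, including the harmless $\gamma_{t}(\overline{H})=\infty$ case, that the paper leaves implicit in its one-line proof.
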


\begin{figure}[h]
\centering\includegraphics[scale=0.8]{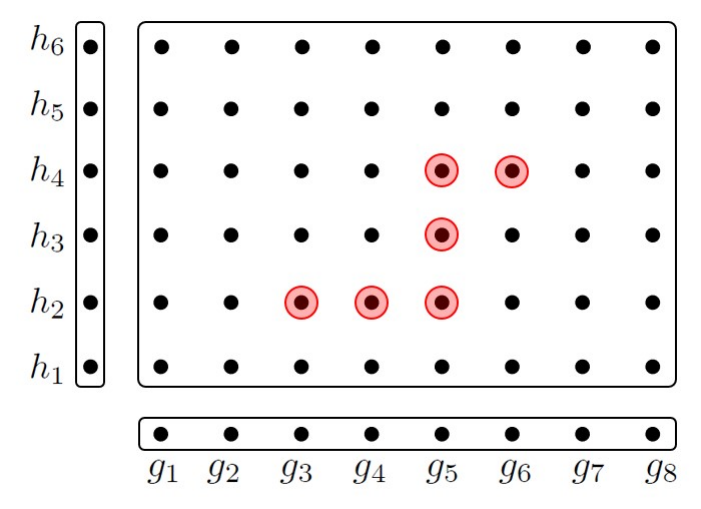}\caption{The figure shows
vertices of graphs $G,$ $H$ and $G\diamond H.$ The set $D$ of vertices in
$G\diamond H$ is highlighted and it forms a snake. The vertex $(g_{5},h_{2})$
is a $H$-corner and the vertex $(g_{5},h_{4})$ is a $G\,$-corner in $D.$}%
\label{Fig04}%
\end{figure}

\paragraph{Upper bound.}

We switch now to the upper bound, for which we need several notions. Let
$D=\{(x_{i},y_{i}):i\in\lbrack q]\}$ be an ordered set of vertices in
$G\diamond H.$ A set $D$ is a \emph{snake} in $G\diamond H$ if for every
$i\in\lbrack q-1]$ it holds that either $x_{i}=x_{i+1}$ or $y_{i}=y_{i+1}.$
Notice that for $q=1$ a set $D$ is trivially a snake. An example of a snake is
shown in Figure \ref{Fig04}. A \emph{projection} of a set $D$ onto $G$ (resp.
$H$) is the set defined by $\mathrm{proj}_{G}(D)=\{x_{i}:i\in\lbrack q]\}$
(resp. $\mathrm{proj}_{H}(D)=\{y_{i}:i\in\lbrack q]\}$). For the snake
$D=\{(g_{3},h_{2}),(g_{4},h_{2}),(g_{5},h_{2}),(g_{5},h_{3}),(g_{5}%
,h_{4}),(g_{6},h_{4})\}$ from Figure \ref{Fig04}, it holds that $\mathrm{proj}%
_{G}(D)=\{g_{3},g_{4},g_{5}\}$ and $\mathrm{proj}_{H}(D)=\{h_{2},h_{3}%
,h_{4}\}.$ Besides, for any claim on $G$ and $H$, its \emph{mirror} claim will
be the same claim when we have exchanged the roles of $G$ and $H$.

\begin{theorem}
\label{Tm79}Let $D=\{(x_{i},y_{i}):i\in\lbrack q]\}$ be a snake in $G\diamond
H$ with one of the following conditions

\begin{itemize}
\item[\emph{(i)}] $\mathrm{proj}_{G}(D)$ is a dominating set in $G$ and
$\mathrm{proj}_{H}(D)$ is a dominating set in $H$;

\item[\emph{(ii)}] $\mathrm{proj}_{G}(D)$ is a total dominating set in
$\overline{G}$ and $\mathrm{proj}_{H}(D)$ is a total dominating set in
$\overline{H}$.
\end{itemize}

\noindent Then, $D$ is a dominating set in $G\diamond H$.
\end{theorem}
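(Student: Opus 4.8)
The plan is to take an arbitrary vertex $(g,h)$ of $G\diamond H$ and exhibit an index $i\in[q]$ with $(g,h)$ dominated by $(x_i,y_i)$; by Observation~\ref{Obs_adjacency} this means either $g\in N_G[x_i]$ and $h\in N_H[y_i]$, or $g\notin N_G[x_i]$ and $h\notin N_H[y_i]$. I would treat the two conditions (i) and (ii) in parallel, since they are mirror-symmetric in the sense of passing to complements; I describe (i) and note that (ii) follows by the same argument applied to $\overline G,\overline H$ (using that $g\notin N_G[x]\iff g\in N_{\overline G}(x)$ for $g\neq x$, with the diagonal handled separately).

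Fix $(g,h)$ under hypothesis (i). Since $\mathrm{proj}_G(D)$ dominates $G$, there is some $i$ with $g\in N_G[x_i]$; since $\mathrm{proj}_H(D)$ dominates $H$, there is some $j$ with $h\in N_H[y_j]$. If $i=j$ we are done immediately, so assume the set $A=\{i: g\in N_G[x_i]\}$ and the set $B=\{j: h\in N_H[y_j]\}$ are both nonempty and disjoint. The key step is to use the snake structure to find an index that lies in both, or else produce an index dominating $(g,h)$ via the ``not–not'' alternative. Consider walking along the ordered list $(x_1,y_1),\dots,(x_q,y_q)$. Pick $a\in A$ and $b\in B$, say $a<b$ after relabeling. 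Walk from position $a$ to position $b$. At position $a$ the $G$-coordinate satisfies $g\in N_G[x_a]$; at position $b$ the $H$-coordinate satisfies $h\in N_H[y_b]$. At each consecutive step the snake keeps one coordinate fixed. So consider the first index $m$ in $[a,b]$ for which $h\in N_H[y_m]$ (such $m$ exists, as $b$ qualifies); then for $m-1$ (if $m>a$) we have $h\notin N_H[y_{m-1}]$. The snake step between $m-1$ and $m$ fixes a coordinate: it cannot fix the $H$-coordinate, since that would force $y_{m-1}=y_m$ and hence $h\in N_H[y_{m-1}]$, contradiction. Therefore it fixes the $G$-coordinate: $x_{m-1}=x_m$. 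Now track the $G$-coordinate instead: let $\ell$ be the largest index in $[a,m]$ with $g\in N_G[x_\ell]$ (it exists, as $a$ qualifies and $a\le m$ because $a<b$ forces... one must check $a\le m$; indeed $m\ge a$). If $\ell\ge m$ then $g\in N_G[x_m]$ and $h\in N_H[y_m]$ and we are done. If $\ell<m$, then $\ell+1\le m$, and $g\notin N_G[x_{\ell+1}]$; the snake step from $\ell$ to $\ell+1$ cannot fix the $G$-coordinate (else $x_\ell=x_{\ell+1}$ and $g\in N_G[x_{\ell+1}]$), so it fixes the $H$-coordinate: $y_\ell=y_{\ell+1}$. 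Iterating, between $\ell$ and $m$ the $G$-coordinate is "trying to change" while the $H$-coordinate changes only where $G$ is constant — I would set this up as: consider the index $m$ above where $x_{m-1}=x_m$; then $g\notin N_G[x_m]=N_G[x_{m-1}]$ would contradict... Let me restate the clean version below.

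Cleaner formulation of the key step: among all pairs $(i,j)$ with $i\in A$, $j\in B$, choose one minimizing $|i-j|$; WLOG $i<j$. I claim $j=i+1$ is impossible unless we are already done, and more generally the minimal gap is $0$. Suppose $i<j$ with no index in $(i,j)$ belonging to $A\cup B$ in the "wrong" way; look at the step from $i$ to $i+1$. It fixes a coordinate. If it fixes $H$: $y_i=y_{i+1}$. But $i\in A$, so $g\in N_G[x_i]$; is $i+1\in A$? If yes, $(i+1,j)$ is a closer pair, contradiction; so $g\notin N_G[x_{i+1}]$, and since $i+1<j\le \dots$ hmm — I need the $H$-side: is $i+1\in B$? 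Since $y_{i+1}=y_i$ and $i\notin B$ (as $A\cap B=\emptyset$), $h\notin N_H[y_{i+1}]$, so $i+1\notin B$. Then at index $i+1$: $g\notin N_G[x_{i+1}]$ and $h\notin N_H[y_{i+1}]$ — that is exactly the "not–not" alternative of Observation~\ref{Obs_adjacency}, so $(x_{i+1},y_{i+1})$ dominates $(g,h)$ and we are done. Symmetrically, if the step from $i$ to $i+1$ fixes $G$: $x_i=x_{i+1}$, so $i+1\in A$, contradicting minimality of the gap. Hence the minimal gap is $0$, i.e. $A\cap B\neq\emptyset$, finishing the proof.

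The main obstacle is organizing this walk argument so that the induction/minimality is airtight: one must be careful that "minimal $|i-j|$ over $i\in A, j\in B$" is the right quantity and that the step from $i$ toward $j$ genuinely produces either a closer pair or a "not–not" witness. I would write it as: let $i\in A, j\in B$ with $|i-j|$ minimum; if $A\cap B=\emptyset$ then $|i-j|\ge 1$; examine the snake step at the end of the interval nearer... and derive a contradiction exactly as above. Condition (ii) is then immediate by the mirror argument via complements, completing the proof.
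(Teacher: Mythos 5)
Your argument is correct and is essentially the paper's proof in different packaging: the paper invokes Proposition~\ref{nodom} and shows by contradiction that the index set $I=\{i: g\in N_{G}[x_{i}]\}$ of a non-dominated vertex is closed under $i\mapsto i\pm 1$ (hence equals $\emptyset$ or $[q]$), while you run the same snake-step case analysis directly as a minimal-gap argument on your sets $A$ and $B$, handling (ii) by the complement mirror. Both proofs hinge on the identical observation that a snake step fixing the $G$-coordinate propagates membership in $A$ while one fixing the $H$-coordinate propagates non-membership in $B$, so your clean formulation is sound and no new idea is involved.
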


\begin{proof}
Assume to the contrary, that $D$ is not a dominating set in $G\diamond H$.
Proposition \ref{nodom} implies that there must exist $I\subseteq\lbrack q]$
and a vertex $(g,h)$ in $G\diamond H$ such that $N[g]\cap\mathrm{proj}%
_{G}(D)=\{x_{i}:i\in I\}$ and $N[h]\cap\mathrm{proj}_{H}(D)=\{y_{i}%
:i\in\lbrack q]\setminus I\}$. Let us show that for every $i\in I,$ the index
$i+1$ must also belong to $I.$ Notice that by the definition of $D$ it holds
that either $x_{i}=x_{i+1}$ or $y_{i}=y_{i+1}.$ If $x_{i}=x_{i+1},$ then
$x_{i}\in N[g]\cap\mathrm{proj}_{G}(D)$ implies $x_{i+1}\in N[g]\cap
\mathrm{proj}_{G}(D),$ so $i+1\in I.$ If $y_{i}=y_{i+1},$ then $y_{i}%
\not \in N[h]\cap\mathrm{proj}_{H}(D)$ implies $y_{i+1}\not \in
N[h]\cap\mathrm{proj}_{H}(D),$ so again $i+1\in I.$ Hence, we have established
that $i\in I$ implies $i+1\in I.$ Similarly, $i\in I$ implies $i-1\in I.$ We
conclude that either $I=\emptyset$ or $I=[q],$ which implies either
$g\not \in N_{G}[\mathrm{proj}_{G}(D)]$ or $h\not \in N_{H}[\mathrm{proj}%
_{H}(D)],$ so either $\mathrm{proj}_{G}(D)$ is not a dominating set in $G$ or
$\mathrm{proj}_{H}(D)$ is not a dominating set in $H,$ a contradiction with
$(i)$. Further, $I=\emptyset$ or $I=[q]$ also implies that $N[h]\cap
\mathrm{proj}_{H}(D)=\{y_{i}:i\in\lbrack q]\}$ or $N[g]\cap\mathrm{proj}%
_{G}(D)=\{x_{i}:i\in\lbrack q]\},$ which means $\mathrm{proj}_{H}(D)$ is not a
$\gamma_{t}(\overline{H})$-set or $\mathrm{proj}_{G}(D)$ is not a $\gamma
_{t}(\overline{G})$-set, a contradiction with $(ii)$.
\end{proof}

The above theorem immediately yields the following upper bound on
$\gamma(G\diamond H).$

\begin{theorem}
\label{Cor_basicUpper}For any two graphs $G$ and $H$
\[
\gamma(G\diamond H)\leq\min\{\gamma(G)+\gamma(H)-1,\gamma_{t}(\overline
{G})+\gamma_{t}(\overline{H})-1\}.
\]

\end{theorem}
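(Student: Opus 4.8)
The plan is to construct an explicit snake in $G\diamond H$ whose projections satisfy one of the two hypotheses of Theorem~\ref{Tm79}, and to do this with as few vertices as possible. First I would prove the bound $\gamma(G\diamond H)\le\gamma(G)+\gamma(H)-1$. Fix a $\gamma(G)$-set $A=\{g_1,\ldots,g_p\}$ and a $\gamma(H)$-set $B=\{h_1,\ldots,h_q\}$, where $p=\gamma(G)$ and $q=\gamma(H)$. The idea is to walk first along a ``row'' and then along a ``column'': take
\[
D=\{(g_1,h_1),(g_2,h_1),\ldots,(g_p,h_1),(g_p,h_2),(g_p,h_3),\ldots,(g_p,h_q)\}.
\]
Consecutive vertices agree in the second coordinate (on the first $p$ of them) or in the first coordinate (on the last $q$ of them), so $D$ is a snake; the vertex $(g_p,h_1)$ is shared between the two parts, so $|D|=p+q-1$. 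By construction $\mathrm{proj}_G(D)=A$ is a dominating set in $G$ and $\mathrm{proj}_H(D)=B$ is a dominating set in $H$, so condition (i) of Theorem~\ref{Tm79} applies and $D$ is a dominating set in $G\diamond H$.

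The second bound $\gamma(G\diamond H)\le\gamma_t(\overline G)+\gamma_t(\overline H)-1$ is proved in exactly the same way, now using a $\gamma_t(\overline G)$-set $A'$ and a $\gamma_t(\overline H)$-set $B'$ and building the analogous ``L-shaped'' snake; its projections are $A'$ and $B'$, so condition (ii) of Theorem~\ref{Tm79} applies. One small point to keep in mind: $\gamma_t(\overline G)$ (resp. $\gamma_t(\overline H)$) is finite exactly when $G$ (resp. $H$) has no universal vertex; if either factor has a universal vertex the corresponding term is $\infty$ and the minimum in the statement is then simply the other term, so there is nothing to check in that degenerate case. Taking the smaller of the two dominating sets produced gives the claimed minimum.

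There is essentially no obstacle here — the whole content is packaged in Theorem~\ref{Tm79}, and the only thing to verify carefully is that the concatenated set really is a snake and that the overlap at the corner vertex accounts for the ``$-1$''. The mild subtlety worth stating explicitly is that the two parts of the snake must be glued at a common vertex (here $(g_p,h_1)$, which lies in both the row through $h_1$ and the column through $g_p$), which is what prevents the naive count $p+q$ and yields $p+q-1$; one should also note that when $p=1$ or $q=1$ the construction degenerates to a single row or column and the formula still reads $\max\{p,q\}$ correctly.
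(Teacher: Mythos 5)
Your proof is correct and follows essentially the same route as the paper: both construct a snake of cardinality $\gamma(G)+\gamma(H)-1$ (resp.\ $\gamma_t(\overline G)+\gamma_t(\overline H)-1$) whose projections are the chosen dominating (resp.\ complement total dominating) sets and then invoke Theorem~\ref{Tm79}, with the same handling of the degenerate case where a factor has a universal vertex. The only cosmetic difference is the shape of the snake (your L-shaped row-then-column walk versus the paper's staircase), which is immaterial since Theorem~\ref{Tm79} depends only on the snake property and the projections.
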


\begin{proof}
Let us first assume that $2\leq\gamma_{t}(\overline{G})\leq\gamma
_{t}(\overline{H})<\infty$.\ Let $D_{G}=\{g_{1},\ldots,g_{k}\}$ be a
$\gamma(G)$-set (resp. $\gamma_{t}(\overline{G})$-set) and $D_{H}%
=\{h_{1},\ldots,h_{t}\}$ be $\gamma(H)$-set (resp. $\gamma_{t}(\overline{H}%
)$-set). Notice that
\[
D=\{(g_{1},h_{1}),(g_{2},h_{1}),(g_{2},h_{2}),(g_{3},h_{2}),(g_{3}%
,h_{3}),\ldots,(g_{k},h_{k}),(g_{k},h_{k+1}),\ldots,(g_{k},h_{t})\},
\]
is a snake in $G\diamond H$ of cardinality $\left\vert D\right\vert
=\gamma(G)+\gamma(H)-1$ (resp. $\left\vert D\right\vert =\gamma_{t}%
(\overline{G})+\gamma_{t}(\overline{H})-1$). Since $\mathrm{proj}_{G}%
(D)=D_{G}$ and $\mathrm{proj}_{H}(D)=D_{H},$ Theorem \ref{Tm79} applied to $D$
immediately yields the result. If $\gamma_{t}(\overline{G})=\infty$ or
$\gamma_{t}(\overline{H})=\infty,$ then the bound $\gamma(G\diamond
H)\leq\gamma_{t}(\overline{G})+\gamma_{t}(\overline{H})-1$ trivially holds,
and $\gamma(G\diamond H)\leq\gamma(G)+\gamma(H)-1$ holds by the same argument
as before.
\end{proof}

\vspace{0.2cm}

We will show in the next section that there are many graphs attaining this
upper bound, but for this we first need some further results. Nevertheless, we
can lower the above bound by one under some special conditions.

\begin{lemma}
\label{Lemma_snake}Let $D=\{(x_{i},y_{i}):i\in\lbrack q]\}$ be a snake in
$G\diamond H$ and $(g,h)$ a vertex of $G\diamond H$ which is not dominated by
$D.$ If $x_{1}\in N_{G}[g],$ then $N_{G}[g]\cap\mathrm{proj}_{G}%
(D)=\mathrm{proj}_{G}(D)$ and $N_{H}[h]\cap\mathrm{proj}_{H}(D)=\emptyset.$
The mirror claim also holds.
\end{lemma}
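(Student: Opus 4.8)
The statement is an immediate descendant of Proposition~\ref{nodom}, so the plan is to translate the hypothesis ``$(g,h)$ is not dominated by $D$'' into the combinatorial language of that proposition and then run the same propagation-along-the-snake argument used in the proof of Theorem~\ref{Tm79}. First I would invoke Proposition~\ref{nodom}: since $(g,h)$ is not dominated, there is an index set $I\subseteq[q]$ with $N_G[g]\cap\mathrm{proj}_G(D)=\{x_i:i\in I\}$ and $N_H[h]\cap\mathrm{proj}_H(D)=\{y_i:i\in[q]\setminus I\}$. (Strictly, Proposition~\ref{nodom} gives ``for every $I$ there is no such vertex''; its contrapositive, which is what was actually used in Theorem~\ref{Tm79}, gives: if $(g,h)$ is undominated then some $I$ realizes these two equalities. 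I would state it in exactly that contrapositive form.)

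\textbf{Key steps.} Next I would re-derive, as in Theorem~\ref{Tm79}, that $I$ is ``upward and downward closed'' along the snake ordering, hence $I=\emptyset$ or $I=[q]$. The argument is: for any $i$, the snake condition gives $x_i=x_{i+1}$ or $y_i=y_{i+1}$; if $x_i=x_{i+1}$ then $i\in I\Leftrightarrow x_i\in N_G[g]\Leftrightarrow x_{i+1}\in N_G[g]\Leftrightarrow i+1\in I$, while if $y_i=y_{i+1}$ then $i\in I\Leftrightarrow y_i\notin N_H[h]\Leftrightarrow y_{i+1}\notin N_H[h]\Leftrightarrow i+1\in I$; the same for $i-1$. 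So the two cases are $I=\emptyset$ and $I=[q]$. Now I bring in the extra hypothesis $x_1\in N_G[g]$: this says $1\in I$, which rules out $I=\emptyset$, forcing $I=[q]$. Finally I translate $I=[q]$ back: $N_G[g]\cap\mathrm{proj}_G(D)=\{x_i:i\in[q]\}=\mathrm{proj}_G(D)$ and $N_H[h]\cap\mathrm{proj}_H(D)=\{y_i:i\in[q]\setminus[q]\}=\emptyset$, which is exactly the claim. The mirror claim follows by swapping the roles of $G$ and $H$ (equivalently, reordering the snake in reverse and noting the snake property and the whole argument are symmetric in the two coordinates, with the negation attached to the $H$-side accounting for the asymmetry in the hypothesis).

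\textbf{Main obstacle.} There is no real obstacle here — the lemma is essentially the ``one-step refinement'' of Theorem~\ref{Tm79}, isolating the dichotomy $I\in\{\emptyset,[q]\}$ and then using a single membership fact ($x_1\in N_G[g]$) to pin down which alternative holds. The only point requiring a little care is bookkeeping the direction of the negation in the $H$-coordinate throughout the closure argument (i.e.\ remembering that $i\in I$ corresponds to $y_i\notin N_H[h]$, not $y_i\in N_H[h]$), and making sure the ``mirror claim'' is stated for the correctly mirrored hypothesis, namely: if $y_1\in N_H[h]$ then $N_H[h]\cap\mathrm{proj}_H(D)=\mathrm{proj}_H(D)$ and $N_G[g]\cap\mathrm{proj}_G(D)=\emptyset$. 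I would write the proof in about a paragraph, citing Proposition~\ref{nodom} for the first step and reusing verbatim the closure computation from the proof of Theorem~\ref{Tm79}.
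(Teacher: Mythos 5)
Your proposal is correct and is essentially the paper's argument: the paper propagates the dichotomy ``exactly one of $x_i\in N_G[g]$, $y_i\in N_H[h]$ holds'' directly along the snake via Observation~\ref{Obs_adjacency}, starting from $x_1\in N_G[g]$, which is the same closure computation you carry out through the index-set $I$ of Proposition~\ref{nodom}. Your packaging via $I\in\{\emptyset,[q]\}$ and your reading of the mirror hypothesis are both sound, so the only difference is presentational.
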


\begin{proof}
Since $(g,h)$ is not dominated by $(x_{1},y_{1}),$ Observation
\ref{Obs_adjacency} implies that either $x_{1}\in N_{G}[g]$ and $y_{1}%
\not \in N_{H}[h]$ or $x_{1}\notin N_{G}[g]$ and $y_{1}\in N_{H}[h]$. Notice
that $x_{1}\in N_{G}[g]$ implies $y_{1}\not \in N_{H}[h].$ Since $D$ is a
snake, we have either $x_{2}=x_{1}$ or $y_{2}=y_{1}.$ If $x_{2}=x_{1},$ then
$x_{2}\in N_{G}[g]$ implies $y_{2}\not \in N_{H}[h].$ If $y_{2}=y_{1},$ then
$y_{2}\not \in N_{H}[h]$ implies $x_{2}\in N_{G}[g].$ Applying the same
argument on $i\in\{3,\ldots,q\}$ yields the claim.
\end{proof}

\bigskip

A snake $D=\{(x_{i},y_{i}):i\in\lbrack q]\}$ for $q\geq2$ is a $G$%
\emph{-snake} if $x_{1}=x_{2}$ and it is an $H$\emph{-snake} if $y_{1}=y_{2}.$
A snake $D$ with $q=1$ is considered to be both a $G$-snake and an $H$-snake.
Notice that for any $i\in\lbrack q],$ a snake $D$ can be divided into two
snakes starting at $(x_{i},y_{i}),$ these are $D_{i}^{-}=\{(x_{i-j+1}%
,y_{i-j+1}):j\in\lbrack i]\}$ and $D_{i}^{+}=\{(x_{i+j-1},y_{i+j-1}%
):j\in\lbrack q+1-i]\}.$ A vertex $(x_{i},y_{i})$ of a snake is a
$G$\emph{-corner} if $D_{i}^{-}$ is a $G$-snake and $D_{i}^{+}$ an $H$-snake.
Similarly, $(x_{i},y_{i})$ is an $H$\emph{-corner} if $D_{i}^{-}$ is an
$H$-snake and $D_{i}^{+}$ a $G$-snake. For example, in the snake $D$ from
Figure \ref{Fig04}, the vertex $(g_{5},h_{4})$ is a $G\,$-corner and the
vertex $(g_{5},h_{2})$ is an $H$-corner.

\begin{theorem}
\label{Tm_corner}Let $D=\{(x_{j},y_{j}):j\in\lbrack q]\}$ be a snake in
$G\diamond H$ which is a dominating set. Let $(g,h)$ be a vertex of $G\diamond
H$ which is dominated only by $(x_{i},y_{i})$ from $D.$ If $(x_{i},y_{i})$ is
a $G$-corner, then $N_{G}[g]\cap\mathrm{proj}_{G}(D)=\{x_{1},\ldots,x_{i}\}$
and $N_{H}[h]\cap\mathrm{proj}_{H}(D)=\{y_{i},\ldots,y_{q}\}$ or $N_{G}%
[g]\cap\mathrm{proj}_{G}(D)=\{x_{i+1},\ldots,x_{q}\}$ and $N_{H}%
[h]\cap\mathrm{proj}_{H}(D)=\{y_{1},\ldots,y_{i-1}\}.$ The mirror claim also holds.
\end{theorem}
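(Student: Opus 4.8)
The plan is to combine Lemma~\ref{Lemma_snake} with the corner structure of $(x_i,y_i)$. Suppose $(g,h)$ is dominated only by $(x_i,y_i)$, and split $D$ into the two sub-snakes $D_i^-$ and $D_i^+$ hinging at $(x_i,y_i)$. Since $(g,h)$ is dominated by $(x_i,y_i)$, Observation~\ref{Obs_adjacency} gives two cases: either $x_i\in N_G[g]$ and $y_i\in N_H[h]$, or $x_i\notin N_G[g]$ and $y_i\notin N_H[h]$. I would treat these two cases separately, and in each case apply Lemma~\ref{Lemma_snake} to one of the sub-snakes after re-indexing it so that $(x_i,y_i)$ is its first vertex.

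First consider the case $x_i\in N_G[g]$ and $y_i\in N_H[h]$. Apply Lemma~\ref{Lemma_snake} to the snake $D_i^-=\{(x_{i-j+1},y_{i-j+1}):j\in[i]\}$, which has first vertex $(x_i,y_i)$; since $x_i\in N_G[g]$, the lemma yields $N_G[g]\cap\mathrm{proj}_G(D_i^-)=\mathrm{proj}_G(D_i^-)$ and $N_H[h]\cap\mathrm{proj}_H(D_i^-)=\emptyset$ — but the latter contradicts $y_i\in N_H[h]$. Hence this case is impossible once we also look at $D_i^+$: the point is that because $(x_i,y_i)$ is a $G$-corner, $D_i^-$ is a $G$-snake, so $x_{i-1}=x_i\in N_G[g]$, and then the first two vertices of $D_i^-$ already trigger the mirror-of-Lemma~\ref{Lemma_snake} dichotomy forcing $y$-values out of $N_H[h]$ along $D_i^-$, contradicting $y_i\in N_H[h]$ (note $i$ itself indexes a vertex of $D_i^-$). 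So in fact the corner hypothesis forces $x_i\notin N_G[g]$ and $y_i\notin N_H[h]$.

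Now with $x_i\notin N_G[g]$ and $y_i\notin N_H[h]$: because $(x_i,y_i)$ is a $G$-corner, $D_i^-$ is a $G$-snake ($x_{i-1}=x_i$) and $D_i^+$ is an $H$-snake ($y_{i+1}=y_i$). Propagate along $D_i^-$ first. From $x_i\notin N_G[g]$ and $(g,h)$ dominated by $(x_i,y_i)$ we have $y_i\notin N_H[h]$; stepping back, $x_{i-1}=x_i\notin N_G[g]$, and since $(g,h)$ is \emph{not} dominated by $(x_{i-1},y_{i-1})$ (it is dominated only by $(x_i,y_i)$), Observation~\ref{Obs_adjacency} forces $y_{i-1}\in N_H[h]$; then continuing down $D_i^-$ the snake condition propagates $y_j\in N_H[h]$ for all $j<i$ and hence (since none of $(x_j,y_j)$, $j<i$, dominates $(g,h)$) $x_j\notin N_G[g]$ for $j<i$. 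Symmetrically, along $D_i^+$, which is an $H$-snake, $y_{i+1}=y_i\notin N_H[h]$ forces $x_{i+1}\in N_G[g]$, and propagating up $D_i^+$ gives $x_j\in N_G[g]$ for all $j>i$ and $y_j\notin N_H[h]$ for $j>i$. Collecting: $N_G[g]\cap\mathrm{proj}_G(D)=\{x_{i+1},\dots,x_q\}$ and $N_H[h]\cap\mathrm{proj}_H(D)=\{y_1,\dots,y_{i-1}\}$. The other alternative of the statement arises from the symmetric bookkeeping when the dichotomy at $(x_i,y_i)$ is resolved the other way, i.e. swapping the roles of $D_i^-$ and $D_i^+$; since $(g,h)$ dominated only by $(x_i,y_i)$ with $(x_i,y_i)$ a $G$-corner, exactly these two configurations are possible.

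The main obstacle is purely bookkeeping: keeping straight, at the hinge vertex $(x_i,y_i)$, which of the two sub-snakes is the $G$-snake and which the $H$-snake, and making sure the equalities $x_{i-1}=x_i$ and $y_{i+1}=y_i$ coming from the $G$-corner definition are used to start the propagation on the correct coordinate. Once the base step at $(x_i,y_i)$ and its two neighbours $(x_{i-1},y_{i-1})$, $(x_{i+1},y_{i+1})$ is set up correctly, the remainder is an immediate induction along each sub-snake exactly as in the proof of Lemma~\ref{Lemma_snake}, using repeatedly that $(g,h)$ is dominated by \emph{none} of the $(x_j,y_j)$ with $j\neq i$. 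The mirror claim (for $(x_i,y_i)$ an $H$-corner) follows by exchanging the roles of $G$ and $H$ throughout.
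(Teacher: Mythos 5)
There is a genuine error in your handling of the first case. You apply Lemma~\ref{Lemma_snake} to the sub-snake $D_i^-$, whose first vertex is $(x_i,y_i)$ itself; but the hypothesis of that lemma is that $(g,h)$ is dominated by \emph{no} vertex of the snake, and $(x_i,y_i)$ does dominate $(g,h)$. So the conclusion $N_H[h]\cap\mathrm{proj}_H(D_i^-)=\emptyset$ is not available, the ``contradiction'' with $y_i\in N_H[h]$ is spurious, and the case $x_i\in N_G[g]$, $y_i\in N_H[h]$ is \emph{not} impossible. Indeed it cannot be ruled out: it is exactly the configuration that produces the first alternative of the statement, $N_G[g]\cap\mathrm{proj}_G(D)=\{x_{1},\ldots,x_{i}\}$ and $N_H[h]\cap\mathrm{proj}_H(D)=\{y_{i},\ldots,y_{q}\}$, which your argument therefore never establishes. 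The closing remark that ``the other alternative arises from the symmetric bookkeeping'' does not repair this, because you have already (incorrectly) eliminated the only configuration that yields it, and ``swapping the roles of $D_i^-$ and $D_i^+$'' is not an available move: the $G$-corner hypothesis fixes which sub-snake is the $G$-snake and which is the $H$-snake.

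The correct move, which is what the paper does, is to strip $(x_i,y_i)$ off both sub-snakes and apply Lemma~\ref{Lemma_snake} to $D_{i-1}^-$ and $D_{i+1}^+$, whose vertices genuinely fail to dominate $(g,h)$. In the case $x_i\in N_G[g]$ and $y_i\in N_H[h]$, the corner equalities $x_{i-1}=x_i$ and $y_{i+1}=y_i$ give $x_{i-1}\in N_G[g]$ and $y_{i+1}\in N_H[h]$; the lemma applied to $D_{i-1}^-$ then yields $x_j\in N_G[g]$ and $y_j\notin N_H[h]$ for $j\leq i-1$, and its mirror applied to $D_{i+1}^+$ yields $y_j\in N_H[h]$ and $x_j\notin N_G[g]$ for $j\geq i+1$, giving the first alternative. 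Your second case is handled correctly and coincides with the paper's argument for the second alternative.
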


\begin{proof}
Since $(g,h)$ is dominated by $(x_{i},y_{i})\in D$, Observation
\ref{Obs_adjacency} implies that either $x_{i}\in N_{G}[g]$ and $y_{i}\in
N_{H}[h]$ or $x_{i}\notin N_{G}[g]$ and $y_{i}\notin N_{H}[h]$. Denote by
$D_{i-1}^{-}$ (resp. $D_{i+1}^{+}$) the snake obtained from $D_{i}^{-}$ (resp.
$D_{i}^{+}$) by removing the first element $(x_{i},y_{i})$ from it.

\vspace{0.2cm}

Assume first that $x_{i}\in N_{G}[g]$ and $y_{i}\in N_{H}[h].$ Since
$(x_{i},y_{i})$ is a $G$-corner, we have that $x_{i-1}\in N_{G}[g]$ and
$y_{i+1}\in N_{H}[h]$, so we can apply Lemma \ref{Lemma_snake} to the snakes
$D_{i-1}^{-}$ and $D_{i+1}^{+}$ to get the result.

Assume next that $x_{i}\notin N_{G}[g]$ and $y_{i}\notin N_{H}[h].$ Since
$(x_{i},y_{i})$ is a $G$-corner, then $x_{i-1}\not \in N_{G}[g]$ and
$y_{i+1}\not \in N_{H}[h]$, consequently, $y_{i-1}\in N_{H}[h]$ and
$x_{i+1}\in N_{G}[g]$, so we can again apply Lemma \ref{Lemma_snake} to the
snakes $D_{i-1}^{-}$ and $D_{i+1}^{+}$ to get the result.
\end{proof}

The above theorem now easily yields the following corollary.

\begin{corollary}
Let $G$ and $H$ be two graphs, let $D_{G}=\{g_{1},\ldots,g_{k}\}$ be a
$\gamma(G)$-set \emph{(}resp. $\gamma_{t}(\overline{G})$-set\emph{)} and
$D_{H}=\{h_{1},\ldots,h_{t}\}$ a $\gamma(H)$-set \emph{(}resp. $\gamma
_{t}(\overline{H})$-set\emph{)}. Suppose that there exists $i\leq\min\{k,t\}$
such that for every $g\in V(G)$ it holds that $|N_{G}[g]\cap D_{G}%
|\notin\{i,k-i\},$ or that the mirror condition holds. Then, $\gamma(G\diamond
H)\leq\gamma(G)+\gamma(H)-2$ \emph{(}resp. $\gamma(G\diamond H)\leq\gamma
_{t}(\overline{G})+\gamma_{t}(\overline{H})-2$\emph{)}.
\end{corollary}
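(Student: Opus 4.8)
The idea is to build a snake $D$ out of $D_G$ and $D_H$ in the standard way (as in the proof of Theorem \ref{Cor_basicUpper}), but shorter by one vertex: instead of concatenating the full zig-zag through all of $D_G$ and then all of $D_H$, I will route the snake so that it ``turns the corner'' at index $i$, producing a set of size $\gamma(G)+\gamma(H)-2$ whose projections are still exactly $D_G$ and $D_H$. Concretely, after relabelling $D_G$ and $D_H$ so that the hypothesis applies at index $i$, consider the snake
\[
D=\{(g_1,h_1),(g_2,h_1),\ldots,(g_i,h_1),(g_i,h_2),\ldots,(g_i,h_{t-k+i}),(g_{i+1},h_{t-k+i}),\ldots,(g_k,h_{t-k+i})\}
\]
(with the vertical and horizontal legs chosen so that the two projections are all of $D_G$ and all of $D_H$, which forces the number of ``turns'' and hence the total count $|D|=k+t-2$ when exactly one corner is placed at the specified position). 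The vertex $(g_i,h_{\ast})$ where the snake bends is then a $G$-corner (or an $H$-corner, in the mirror case): the piece before it, $D_i^-$, is a $G$-snake and the piece after it, $D_i^+$, is an $H$-snake.

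Next, suppose for contradiction that $D$ is not a dominating set of $G\diamond H$, and let $(g,h)$ be an undominated vertex. Since $\mathrm{proj}_G(D)=D_G$ is a dominating set of $G$ (resp. a total dominating set of $\overline G$) and likewise for $H$, Theorem \ref{Tm79} tells us $D$ would actually be dominating if it were not for the corner — so more carefully, I repeat the argument of Theorem \ref{Tm79}: by Proposition \ref{nodom} there is $I\subseteq[q]$ with $N_G[g]\cap\mathrm{proj}_G(D)=\{x_j:j\in I\}$ and $N_H[h]\cap\mathrm{proj}_H(D)=\{y_j:j\in[q]\setminus I\}$. The snake-propagation argument from Theorem \ref{Tm79} shows that $I$ is ``closed under $\pm1$'' along any stretch where consecutive snake vertices agree in the relevant coordinate, but because $D$ changes type exactly once (at the corner), $I$ need not be all of $[q]$ or empty: the only remaining possibility is that $I=\{1,\ldots,i'\}$ or $I=\{i'+1,\ldots,q\}$ for the corner index $i'$. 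Applying Theorem \ref{Tm_corner} to the corner vertex $(x_{i'},y_{i'})$ then pins this down to $|N_G[g]\cap D_G|=i'$ together with $|N_H[h]\cap D_H|=q-i'$, or $|N_G[g]\cap D_G|=k-i'$; in either case, translating $i'$ back to the parameter $i$ of the statement, we get $|N_G[g]\cap D_G|\in\{i,k-i\}$, contradicting the hypothesis. (In the mirror case one gets $|N_H[h]\cap D_H|\in\{i,t-i\}$ contradicting the mirror hypothesis instead.)

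The main obstacle I anticipate is bookkeeping: making the construction of the shortened snake fully precise so that its two projections are genuinely all of $D_G$ and all of $D_H$ while its size drops to exactly $k+t-2$, and correctly matching the corner index $i'$ appearing in Theorem \ref{Tm_corner} with the index $i$ in the hypothesis (there is an off-by-one subtlety in whether a corner ``at $g_i$'' yields the value $i$ or $i-1$ or $i+1$ for $|N_G[g]\cap D_G|$, which is exactly why the hypothesis excludes the pair $\{i,k-i\}$ rather than a single value). Once the corner is placed so that $D_{i'}^-$ realizes $\{g_1,\ldots,g_i\}$ and $D_{i'}^+$ realizes $\{g_{i+1},\ldots,g_k\}$ on the $G$-side, Theorem \ref{Tm_corner} does all the real work; everything else is the same zig-zag reasoning already used for Theorem \ref{Cor_basicUpper}, applied once on each side of the corner via Lemma \ref{Lemma_snake}. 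The infinite cases ($\gamma_t(\overline G)=\infty$ or $\gamma_t(\overline H)=\infty$) are vacuous for the total-domination version and do not arise there.
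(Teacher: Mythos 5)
Your plan fails at the very first step: a snake of size $k+t-2$ whose projections are \emph{all} of $D_G$ and \emph{all} of $D_H$ does not exist. In a snake, consecutive elements agree in exactly one coordinate, so each new element increases $|\mathrm{proj}_G(D)|+|\mathrm{proj}_H(D)|$ by at most one; starting from $2$ for a single vertex, a snake with full projections must therefore have at least $k+t-1$ elements. Your explicit formula confirms the problem: it has $t+i-1$ elements and its $H$-projection is only $\{h_1,\ldots,h_{t-k+i}\}$, and no choice of ``legs'' can repair this. The fallback reasoning does not rescue the plan either. The propagation argument inside Theorem \ref{Tm79} does not ``break'' at a corner: for an undominated vertex the index set $I$ is closed under $\pm 1$ at \emph{every} consecutive pair (whichever coordinate is repeated drives the implication), so $I$ is always $\emptyset$ or $[q]$, never an interval $\{1,\ldots,i'\}$. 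And Theorem \ref{Tm_corner} cannot be applied to an undominated vertex: its hypotheses are that the snake \emph{is} a dominating set and that $(g,h)$ is dominated by \emph{exactly one} of its elements.

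The paper's proof keeps the full snake
\[
D=\{(g_{1},h_{1}),(g_{2},h_{1}),(g_{2},h_{2}),\ldots,(g_{k},h_{k}),\ldots,(g_{k},h_{t})\}
\]
of size $k+t-1$, which is dominating by Theorem \ref{Tm79}, and then \emph{deletes} the vertex $(g_i,h_i)$ (rather than building a shorter snake). If $D\setminus\{(g_i,h_i)\}$ failed to dominate, some $(g,h)$ would be dominated only by $(g_i,h_i)$, which is a $G$-corner of $D$ sitting at snake position $2i-1$; Theorem \ref{Tm_corner} then forces $N_G[g]\cap D_G$ to be $\{g_1,\ldots,g_i\}$ or $\{g_{i+1},\ldots,g_k\}$, i.e.\ $|N_G[g]\cap D_G|\in\{i,k-i\}$, contradicting the hypothesis. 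Your instinct that Theorem \ref{Tm_corner} ``does all the real work'' at the corner is right, but it must be invoked for the full dominating snake with one vertex removed, not for a shortened snake.
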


\begin{proof}
Let us consider the set
\[
D=\{(g_{1},h_{1}),(g_{2},h_{1}),(g_{2},h_{2}),(g_{3},h_{2}),(g_{3}%
,h_{3}),\ldots,(g_{k},h_{k-1}),(g_{k},h_{k}),\ldots,(g_{k},h_{t})\}.
\]
Notice that $D$ is a snake and $\left\vert D\right\vert =k+t-1.$ Since
$\mathrm{proj}_{G}(D)=D_{G}$ is a $\gamma(G)$-set (resp. $\gamma_{t}%
(\overline{G})$-set) and $\mathrm{proj}_{H}(D)=D_{H}$ is a $\gamma(H)$-set
(resp. $\gamma_{t}(\overline{H})$-set), Theorem \ref{Tm79} implies that $D$ is
a dominating set in $G\diamond H$. We claim that $D\backslash\{(g_{i}%
,h_{i})\}$ is also a dominating set in $G\diamond H.$ Assume to the contrary
that $D\backslash\{(g_{i},h_{i})\}$ is not a dominating set in $G\diamond H$.
Hence, there must exist $(g,h)$ in $G\diamond H$ which is dominated only by
$(g_{i},h_{i})$ of $D.$ Notice that $(g_{i},h_{i})$ is a $G$-corner, so
Theorem \ref{Tm_corner} implies that $\left\vert N_{G}[g]\cap\mathrm{proj}%
_{G}(D)\right\vert \in\{i,k-i\},$ a contradiction.
\end{proof}

\smallskip

To show that Theorems \ref{Lobound} and \ref{Cor_basicUpper} are sharp, we
consider any graph $H$ with a universal vertex $h$. By Corollary
\ref{Cor_sufficient} we have $\gamma(G\diamond H)\geq\gamma(G)$ and from
Theorem \ref{Cor_basicUpper} $\gamma(G\diamond H)\leq\gamma(G)$ follows.
Whence we obtain the next result.

\begin{corollary}
\label{univ} If $G$ and $H$ are two graphs such that $H$ has a universal
vertex, then $\gamma(G\diamond H)=\gamma(G)$.
\end{corollary}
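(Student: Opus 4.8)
The plan is to obtain both inequalities directly from results already established. For the lower bound $\gamma(G\diamond H)\geq\gamma(G)$, I would invoke Corollary \ref{Cor_sufficient}: let $h$ be the universal vertex of $H$ and take any $\gamma(G\diamond H)$-set $D=\{(g_1,h_1),\ldots,(g_k,h_k)\}$. Corollary \ref{Cor_sufficient} says that either $\{g_1,\ldots,g_k\}$ dominates $G$ or $\{h_1,\ldots,h_k\}$ totally dominates $\overline{H}$. The second option is impossible, since $h$ is universal in $H$ and therefore isolated in $\overline{H}$, so $\overline{H}$ has no total dominating set at all ($\gamma_t(\overline{H})=\infty$). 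Hence $\{g_1,\ldots,g_k\}$ dominates $G$, giving $k\geq\gamma(G)$.

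For the upper bound $\gamma(G\diamond H)\leq\gamma(G)$, I would appeal to Theorem \ref{Cor_basicUpper}, which gives $\gamma(G\diamond H)\leq\min\{\gamma(G)+\gamma(H)-1,\ \gamma_t(\overline{G})+\gamma_t(\overline{H})-1\}$. Because $h$ is universal in $H$, the single vertex $\{h\}$ dominates $H$, so $\gamma(H)=1$, and thus $\gamma(G)+\gamma(H)-1=\gamma(G)$. Combining the two inequalities yields $\gamma(G\diamond H)=\gamma(G)$.

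I do not anticipate a serious obstacle here, since the statement is essentially a specialization of the two preceding bounds once one observes that a universal vertex in $H$ forces $\gamma(H)=1$ and $\gamma_t(\overline{H})=\infty$. The only point requiring a moment's care is making sure the degenerate case $\gamma_t(\overline{H})=\infty$ is handled correctly in the cited results — but Theorem \ref{Cor_basicUpper} is explicitly stated to cover that case (the bound through $\gamma_t(\overline{G})+\gamma_t(\overline{H})-1$ holds trivially), and Corollary \ref{Cor_sufficient} is a purely logical disjunction, so "$\{h_1,\ldots,h_k\}$ is a total dominating set of $\overline{H}$" being vacuously false simply forces the other disjunct. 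Thus the whole argument is two one-line applications of earlier results, exactly as sketched in the paragraph preceding the corollary in the excerpt.
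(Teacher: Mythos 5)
Your proposal is correct and follows exactly the paper's own route: the paper derives the lower bound $\gamma(G\diamond H)\geq\gamma(G)$ from Corollary \ref{Cor_sufficient} (the total-domination disjunct being impossible because the universal vertex of $H$ is isolated in $\overline{H}$) and the upper bound from Theorem \ref{Cor_basicUpper} with $\gamma(H)=1$. You have merely spelled out the details that the paper leaves implicit in the paragraph preceding the corollary.
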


\noindent Since $\gamma(G)$ is not bounded, clearly also $\gamma(G\diamond H)$
is not bounded. More families of graphs with unbounded $\gamma(G\diamond H)$
will be presented at the end of this section.

The first of the exact result in the next corollary is a direct consequence of
(\ref{complete}) and the results on $\gamma(G\boxtimes K_{n})$ and
$\gamma(G\circ K_{n})$ from the literature, but both of them also follow from
Corollary \ref{univ}.

\begin{corollary}
\label{Kn} For a graph $G$ and a positive integer $n$, the following
equalities hold

\begin{itemize}
\item[\emph{(i)}] $\gamma(G\diamond K_{n})=\gamma(G)$;

\item[\emph{(ii)}] $\gamma(G\diamond K_{1,n})=\gamma(G)$, where $K_{1,n}$ is a
complete bipartite graph.
\end{itemize}
\end{corollary}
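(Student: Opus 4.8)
The plan is to deduce both statements directly from Corollary \ref{univ}, since each of the graphs $K_n$ and $K_{1,n}$ possesses a universal vertex. First I would observe that $K_n$ trivially has a universal vertex: every vertex of a complete graph is adjacent to all others, so $N_{K_n}[v] = V(K_n)$ for any $v$. Applying Corollary \ref{univ} with $H = K_n$ immediately yields $\gamma(G \diamond K_n) = \gamma(G)$, which is part (i). For completeness one could also remark that this matches the known identity via $(\ref{complete})$, namely $G \diamond K_n \cong G \boxtimes K_n \cong G \circ K_n$, together with the literature values of $\gamma(G \boxtimes K_n)$ and $\gamma(G \circ K_n)$; but the self-contained argument through Corollary \ref{univ} is shorter and requires nothing external.

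For part (ii), the key point is that the star $K_{1,n}$ has a universal vertex, namely its center $c$: the center is adjacent to all $n$ leaves, so $N_{K_{1,n}}[c] = V(K_{1,n})$. (When $n = 1$ the graph $K_{1,1} \cong K_2$ and both vertices are universal; when $n = 0$ we have a single vertex, again universal, so the edge cases cause no trouble.) Hence Corollary \ref{univ} applies with $H = K_{1,n}$ and gives $\gamma(G \diamond K_{1,n}) = \gamma(G)$.

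There is essentially no obstacle here: the only thing to check is the hypothesis of Corollary \ref{univ}, i.e. the existence of a universal vertex in the second factor, and this is immediate for both $K_n$ and $K_{1,n}$. The corollary is stated precisely so that these two families fall out as one-line consequences, and it also records the cross-reference promised earlier in the text (the sharpness discussion following Theorem \ref{Lobound}, where it was announced that the lower bound is attained by $H = K_n$).
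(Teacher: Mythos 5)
Your proposal is correct and matches the paper's own reasoning exactly: the paper states that both equalities follow from Corollary \ref{univ} because $K_n$ and $K_{1,n}$ each contain a universal vertex (any vertex of $K_n$, and the center of the star, respectively). Your additional remarks on the isomorphisms in (\ref{complete}) and the small-$n$ edge cases are consistent with the paper's brief discussion and introduce no gaps.
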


It is easy to see that $D=D_{G}\times\{h\}$ is a dominating set of $G\diamond
H$ for any $\gamma(G)$-set $D_{G}$ and a universal vertex $h$ of $H$. But, if
$h$ is not a universal vertex in $H$, as we will see in the next proposition,
$D_{G}$ needs to be an SDCTD-set of $G$ to yield a dominating set of
$G\diamond H$.

\begin{proposition}
\label{SDCTD} If $G$ and $H$ are two graphs, then $\gamma(G\diamond H)\leq
\min\{\bar{\gamma}(G),\bar{\gamma}(H)\}$.
\end{proposition}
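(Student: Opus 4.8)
The plan is to fix an SDCTD set $D_G$ of $G$ with $|D_G|=\bar{\gamma}(G)$ together with an arbitrary vertex $h\in V(H)$, and to show that $D=D_G\times\{h\}$ is a dominating set of $G\diamond H$. This yields $\gamma(G\diamond H)\le\bar{\gamma}(G)$, and running the mirror construction $\{g\}\times D_H$ for an SDCTD set $D_H$ of $H$ (using that $\diamond$ is commutative) yields $\gamma(G\diamond H)\le\bar{\gamma}(H)$, so the minimum of the two bounds holds. Should $\bar{\gamma}(G)$ fail to exist, i.e.\ should $G$ have a universal vertex, the inequality $\gamma(G\diamond H)\le\bar{\gamma}(G)$ is vacuous, and likewise for $H$; hence it suffices to treat the case where the relevant parameter is finite.

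To check that $D=D_G\times\{h\}$ dominates $G\diamond H$, I would take an arbitrary vertex $(g,h_1)$ and split into two cases via Observation \ref{Obs_adjacency}. If $h_1\in N_H[h]$, then because $D_G$ is a dominating set of $G$ there is $g'\in D_G$ with $g\in N_G[g']$, and $(g',h)\in D$ dominates $(g,h_1)$ since $g\in N_G[g']$ and $h_1\in N_H[h]$. If $h_1\notin N_H[h]$ --- in particular $h_1\neq h$ --- then because $D_G$ is a total dominating set of $\overline{G}$ there is $g'\in D_G$ adjacent to $g$ in $\overline{G}$, so $g\neq g'$ and $gg'\notin E(G)$, whence $g\notin N_G[g']$; then $(g',h)\in D$ dominates $(g,h_1)$ since $g\notin N_G[g']$ and $h_1\notin N_H[h]$. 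Either way $(g,h_1)\in N_{G\diamond H}[D]$, so $D$ is a dominating set of cardinality $\bar{\gamma}(G)$.

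I do not expect a genuine obstacle in this argument; the points requiring attention are purely bookkeeping. One must notice that $h\in N_H[h]$, so the vertices of the ``layer'' $V(G)\times\{h\}$ themselves fall under the first case, and one must apply the two defining properties of an SDCTD set in the right places: plain domination of $G$ when the $H$-coordinates are ``close'' and total domination of $\overline{G}$ when they are ``far'', matching exactly the two alternatives in Observation \ref{Obs_adjacency}. Combining the bound just obtained with its mirror gives $\gamma(G\diamond H)\le\min\{\bar{\gamma}(G),\bar{\gamma}(H)\}$, as claimed.
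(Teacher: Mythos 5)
Your proposal is correct and follows essentially the same route as the paper: both take $D=D_G\times\{h\}$ for an SDCTD set $D_G$ and split on whether the $H$-coordinate lies in $N_H[h]$, using domination of $G$ in the first case and total domination of $\overline{G}$ in the second. The only (harmless) addition is your explicit remark about the vacuous case when $\bar{\gamma}$ is undefined.
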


\begin{proof}
Let $D_{1}$ be a $\bar{\gamma}(G)$-set, let $h_{1}$ be a vertex from $H$ and
let $D=D_{1}\times\{h_{1}\}$. Let us see that any vertex $(g,h)\in V(G\diamond
H)\setminus D$ is dominated by $D$. If $h\in N_{H}[h_{1}]$, then $(g,h)$ is
dominated by $D$ since $D_{1}$ is a dominating set in $G$. If $h\notin
N_{H}[h_{1}]$, then there exists $g^{\prime}\in D_{1}$ which is not adjacent
to $g$ since $D_{1}$ is a SDCTD set in $G$, so $(g,h)$ is adjacent to
$(g^{\prime},h_{1})$ by a co-direct edge. Consequently, $\gamma(G\diamond
H)\leq|D|=|D_{1}|=\bar{\gamma}(G)$. By symmetric arguments, we can see that
also $\gamma(G\diamond H)\leq\bar{\gamma}(H)$ holds.
\end{proof}

If $G$ is a graph with $\mathrm{diam}(G)\geq3$, then $\bar{\gamma}(G)$ is
close to $\gamma(G)$. More precisely,
\[
\gamma(G)\leq\bar{\gamma}(G)\leq\gamma(G)+2=\gamma(G)+\gamma_{t}(\overline
{G}),
\]
since $D\cup\{g,g^{\prime}\}$ is a SDCTD set for a $\gamma(G)$-set $D$ and for
$g,g^{\prime}\in V(G)$ such that $d_{G}(g,g^{\prime})\geq3$. Hence,
Proposition \ref{SDCTD} yields the next result.

\begin{corollary}
\label{Cor_13}If $G$ and $H$ are two graphs such that $\mathrm{diam}(G)\geq3$,
then $\gamma(G\diamond H)\leq\gamma(G)+2$.
\end{corollary}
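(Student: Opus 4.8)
The statement to prove is Corollary~\ref{Cor_13}: if $\mathrm{diam}(G)\geq 3$, then $\gamma(G\diamond H)\leq \gamma(G)+2$.

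The plan is to combine Proposition~\ref{SDCTD} with the simple sandwich inequality $\gamma(G)\leq\bar\gamma(G)\leq\gamma(G)+2$ that is established in the paragraph immediately preceding the corollary. Concretely, Proposition~\ref{SDCTD} gives $\gamma(G\diamond H)\leq\min\{\bar\gamma(G),\bar\gamma(H)\}\leq\bar\gamma(G)$, so it suffices to bound $\bar\gamma(G)$ from above by $\gamma(G)+2$ whenever $\mathrm{diam}(G)\geq 3$.

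For that bound, the key step is to exhibit an explicit SDCTD set of $G$ of size $\gamma(G)+2$. Take any $\gamma(G)$-set $D$, and since $\mathrm{diam}(G)\geq 3$ pick two vertices $g,g'$ with $d_G(g,g')\geq 3$. I claim $D'=D\cup\{g,g'\}$ is an SDCTD set. It is clearly a dominating set of $G$, since it contains the dominating set $D$. It remains to check that $D'$ is a \emph{total} dominating set of $\overline{G}$, i.e.\ that every vertex $v\in V(G)$ has a neighbor in $\overline{G}$ inside $D'$ — equivalently, some vertex of $D'$ nonadjacent (in $G$, and distinct) to $v$. Here the distance-$3$ condition does the work: since $d_G(g,g')\geq 3$, no single vertex $v$ can be adjacent in $G$ to both $g$ and $g'$, and moreover $g$ and $g'$ are nonadjacent to each other. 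Hence for every $v$, at least one of $g,g'$ is a non-neighbor of $v$ in $G$ different from $v$ (if $v=g$, then $g'$ works; if $v=g'$, then $g$ works; if $v\notin\{g,g'\}$, then at least one of $g,g'$ is a non-neighbor). So $D'$ totally dominates $\overline{G}$, giving $\bar\gamma(G)\leq|D'|\leq\gamma(G)+2$.

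The argument is essentially bookkeeping once Proposition~\ref{SDCTD} is in hand, so there is no real obstacle; the only point requiring a moment's care is the verification that two vertices at distance at least $3$ have the property that no vertex is adjacent to both of them (which is exactly why $\gamma_t(\overline G)=2$ in that case, cf.\ Proposition~\ref{total2}), and that this forces $D\cup\{g,g'\}$ to totally dominate $\overline G$. Chaining this with Proposition~\ref{SDCTD} then completes the proof:
\[
\gamma(G\diamond H)\leq\min\{\bar\gamma(G),\bar\gamma(H)\}\leq\bar\gamma(G)\leq\gamma(G)+2.
\]
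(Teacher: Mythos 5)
Your proof is correct and follows exactly the paper's route: the paper also establishes $\bar{\gamma}(G)\leq\gamma(G)+2$ by noting that $D\cup\{g,g'\}$ is an SDCTD set for a $\gamma(G)$-set $D$ and vertices $g,g'$ with $d_G(g,g')\geq3$, and then invokes Proposition~\ref{SDCTD}. The only difference is that you spell out the verification that $D\cup\{g,g'\}$ totally dominates $\overline{G}$, which the paper leaves implicit.
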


It is easy to observe that, if $D$ is a dominating set in $G$ and it contains
two vertices $g_{1},g_{2}\in D$ such that $d_{G}(g_{1},g_{2})\geq3$, then $D$
is an SDCTD set. In particular, if $G$ is an ECD graph with $\gamma(G)\geq2$,
it contains a dominating set satisfying that condition.

\begin{corollary}
\label{Prop_case3}Let $G$ and $H$ be two graphs. If $D_{G}$ is a $\gamma
(G)$-set in $G$ and there exist $g_{1},g_{2}\in D_{G}$ such that $d_{G}%
(g_{1},g_{2})\geq3$, then $\gamma(G\diamond H)\leq\gamma(G)$.
\end{corollary}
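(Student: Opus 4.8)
The plan is to show that, under the hypothesis, the set $D_G$ is in fact an SDCTD set of $G$, and then invoke Proposition \ref{SDCTD} directly. Since $D_G$ is already a $\gamma(G)$-set, it is a dominating set of $G$, so it remains only to verify that $D_G$ is a total dominating set of $\overline{G}$, i.e.\ that $N_{\overline G}(D_G)=V(G)$. Equivalently, every vertex $v\in V(G)$ must have a neighbor in $D_G$ in the complement graph, that is, some $g\in D_G$ with $g\neq v$ and $vg\notin E(G)$.

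First I would take an arbitrary $v\in V(G)$ and use the two guaranteed vertices $g_1,g_2\in D_G$ with $d_G(g_1,g_2)\geq 3$. The key observation is that a vertex at distance at least $3$ from $g_1$ cannot be adjacent (in $G$) to both $g_1$ and $g_2$; more carefully, $v$ cannot lie in $N_G[g_1]\cap N_G[g_2]$, since that would give a path of length at most $2$ between $g_1$ and $g_2$, contradicting $d_G(g_1,g_2)\geq 3$. Hence at least one of $g_1,g_2$, say $g_i$, satisfies $v\notin N_G[g_i]$, which means $v\neq g_i$ and $vg_i\notin E(G)$, i.e.\ $vg_i\in E(\overline G)$. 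Therefore $v$ has a neighbor in $D_G$ in $\overline G$. Since $v$ was arbitrary, $D_G$ totally dominates $\overline G$, so $D_G$ is an SDCTD set of $G$ and $\bar\gamma(G)\leq|D_G|=\gamma(G)$. Combining with Proposition \ref{SDCTD} gives $\gamma(G\diamond H)\leq\bar\gamma(G)\leq\gamma(G)$.

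There is no serious obstacle here; the only point requiring a moment's care is the small case distinction ensuring $v$ avoids $N_G[g_1]\cap N_G[g_2]$, and noting that the argument also covers $v\in\{g_1,g_2\}$ (if $v=g_1$ then $v\notin N_G[g_2]$ since $d_G(g_1,g_2)\geq 3>0$, and symmetrically). One could also phrase the conclusion via the remark immediately preceding the corollary, which already states that a dominating set containing two vertices at distance at least $3$ is an SDCTD set; in that case the proof reduces to a single sentence citing that remark together with Proposition \ref{SDCTD}. I would likely present the self-contained distance argument for completeness, then close with the appeal to Proposition \ref{SDCTD}.
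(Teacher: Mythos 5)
Your proposal is correct and follows exactly the paper's route: the paper's proof is the one-line observation that $D_G$ is an SDCTD set (justified by the remark immediately preceding the corollary, which is precisely your distance argument) followed by an appeal to Proposition \ref{SDCTD}. You have merely written out in full the verification that the paper leaves as ``easy to observe.''
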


\begin{proof}
The result follows from Proposition \ref{SDCTD} since $D_{G}$ is a SDCTD set
of $G$.
\end{proof}

\begin{corollary}
\label{ECD} If $G$ is an \emph{ECD} graph wuith $\gamma(G)\geq2$ and $H$ is
any graph, then $\gamma(G\diamond H)\leq\gamma(G)$.
\end{corollary}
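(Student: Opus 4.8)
The plan is to reduce Corollary~\ref{ECD} to the previous corollary. First I would recall what it means for $G$ to be an ECD graph: there is a dominating set $D_{G}$ whose closed neighborhoods partition $V(G)$, and $|D_{G}|=\gamma(G)$ (an ECD set is automatically a minimum dominating set, since closed neighborhoods covering $V(G)$ without overlap forces $|D_{G}|\ge \gamma(G)$, while being dominating forces $|D_{G}|\le\gamma(G)$). So it suffices to show that when $\gamma(G)\ge 2$, this ECD set $D_{G}$ contains two vertices $g_{1},g_{2}$ with $d_{G}(g_{1},g_{2})\ge 3$; then Corollary~\ref{Prop_case3} applied to $D_{G}$ immediately gives $\gamma(G\diamond H)\le\gamma(G)$.

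The key step is therefore the distance claim. Take any two distinct $g_{1},g_{2}\in D_{G}$ (which exist since $|D_{G}|=\gamma(G)\ge 2$). Since $\{N_{G}[g]:g\in D_{G}\}$ is a partition, $N_{G}[g_{1}]\cap N_{G}[g_{2}]=\emptyset$. In particular $g_{1}g_{2}\notin E(G)$ (otherwise $g_{2}\in N_{G}[g_{1}]\cap N_{G}[g_{2}]$), and there is no common neighbor $w$ of $g_{1}$ and $g_{2}$ (otherwise $w\in N_{G}[g_{1}]\cap N_{G}[g_{2}]$). Hence $d_{G}(g_{1},g_{2})\ge 3$, exactly as needed. (If $g_{1}$ and $g_{2}$ lie in different components, $d_{G}(g_{1},g_{2})=\infty\ge 3$, which is also fine.)

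I do not anticipate a serious obstacle here; this is a short deduction, and the authors have in fact flagged it already in the paragraph preceding the statement (``if $G$ is an ECD graph with $\gamma(G)\ge 2$, it contains a dominating set satisfying that condition''). The only point requiring a line of care is justifying that an ECD set has cardinality exactly $\gamma(G)$, so that the hypothesis $\gamma(G)\ge 2$ genuinely supplies two distinct vertices of $D_{G}$; once that is in place, the disjointness of the closed neighborhoods does all the work and the result follows from Corollary~\ref{Prop_case3}.

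\begin{proof}
Let $D_{G}$ be an ECD set of $G$, so that $\{N_{G}[g]:g\in D_{G}\}$ is a partition of $V(G)$. Then $D_{G}$ is in particular a dominating set, and since its closed neighborhoods are pairwise disjoint we have $|D_{G}|=\gamma(G)$; thus $D_{G}$ is a $\gamma(G)$-set. As $\gamma(G)\ge 2$, we may pick two distinct vertices $g_{1},g_{2}\in D_{G}$. Because $N_{G}[g_{1}]\cap N_{G}[g_{2}]=\emptyset$, the vertices $g_{1}$ and $g_{2}$ are non-adjacent and have no common neighbor, so $d_{G}(g_{1},g_{2})\ge 3$. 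Corollary~\ref{Prop_case3} now gives $\gamma(G\diamond H)\le\gamma(G)$.
\end{proof}
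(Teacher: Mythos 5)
Your proof is correct and follows the paper's own route exactly: the pairwise disjointness of the closed neighborhoods of an ECD set yields two vertices at distance at least $3$, and Corollary~\ref{Prop_case3} finishes the job (the paper flags precisely this in the paragraph before the statement). One cosmetic slip in your preamble: the justifications for $|D_{G}|\ge\gamma(G)$ and $|D_{G}|\le\gamma(G)$ are swapped --- being dominating gives $|D_{G}|\ge\gamma(G)$, while the disjointness (any dominating set must meet each $N_{G}[g]$, $g\in D_{G}$) gives $|D_{G}|\le\gamma(G)$ --- but the proof body itself is fine.
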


We also have the following corollary.

\begin{corollary}
\label{diam5} If $G$ is a graph such that $\mathrm{diam}(G)\geq5$, then
$\gamma(G\diamond H)\leq\gamma(G)$.
\end{corollary}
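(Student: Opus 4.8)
The goal is to show that $\mathrm{diam}(G)\geq 5$ forces $\gamma(G\diamond H)\leq\gamma(G)$, and in view of Corollary~\ref{Prop_case3} it suffices to prove that $G$ admits a \emph{minimum} dominating set containing two vertices at distance at least $3$. So the whole question reduces to a statement purely about $G$: every graph of diameter at least $5$ has a $\gamma(G)$-set of diameter at least $3$. I would fix two vertices $u,v$ with $d_G(u,v)=\mathrm{diam}(G)\geq 5$ and build the desired dominating set around them.

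The plan is to start from an arbitrary $\gamma(G)$-set $D$ and modify it without increasing its size so that it ends up containing a far-apart pair. First note that $D$ must contain a vertex $a\in N_G[u]$ and a vertex $b\in N_G[v]$; since $d_G(u,v)\geq 5$ we get $d_G(a,b)\geq d_G(u,v)-2\geq 3$, so in fact \emph{any} $\gamma(G)$-set already works, because it necessarily dominates $u$ from within the ball $B_1(u)$ and dominates $v$ from within $B_1(v)$, and these two balls are at distance at least $3$. Thus I would not even need to modify $D$: pick $a,b\in D$ with $a\in N_G[u]$, $b\in N_G[v]$; then $d_G(a,b)\geq 3$, Corollary~\ref{Prop_case3} applies with $g_1=a$, $g_2=b$, and we are done.

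The only point that needs a line of care is the triangle-inequality estimate $d_G(a,b)\geq d_G(u,v)-d_G(u,a)-d_G(v,b)\geq 5-1-1=3$, together with the remark that $D$ being a dominating set guarantees $N_G[u]\cap D\neq\emptyset$ and $N_G[v]\cap D\neq\emptyset$ (these sets could coincide as vertices only if $N_G[u]\cap N_G[v]\neq\emptyset$, i.e. $d_G(u,v)\leq 2$, which is impossible here, so $a\neq b$ automatically). I expect no real obstacle: the ``diameter at least $5$'' hypothesis is exactly what is needed so that the two private dominating vertices of the endpoints are forced to be at distance at least $3$, and everything else is quoting Corollary~\ref{Prop_case3}. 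One could remark in passing that the same argument shows more: $\mathrm{diam}(G)\geq 5$ implies $\bar\gamma(G)=\gamma(G)$, which is perhaps worth stating, but it is not required for the corollary as phrased.
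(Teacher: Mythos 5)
Your argument is correct and is essentially identical to the paper's proof: both take two vertices at distance at least $5$, observe that any dominating set must meet each of their closed neighborhoods, deduce by the triangle inequality that the corresponding two dominating vertices are at distance at least $3$, and then invoke Corollary~\ref{Prop_case3}. Your side remark that $\mathrm{diam}(G)\geq 5$ in fact gives $\bar\gamma(G)=\gamma(G)$ is also valid.
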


\begin{proof}
If we have two vertices $g_{1},g_{2}$ such that $d_{G}(g_{1},g_{2})\geq5$, for
any dominating set $D_{G}$, since $D_{G}\cap N[g_{1}]\neq\emptyset$ and
$D_{G}\cap N[g_{2}]\neq\emptyset$, there exist two vertices $u_{1},u_{2}\in
D_{G}$ such that $d_{G}(u_{1},u_{2})\geq3$.
\end{proof}

Some of well known ECD graphs are the cube $Q_{3}$, the cube minus a vertex
$Q_{3}^{-}$, cycles $C_{3k}$, $k>1$, paths $P_{t}$, $t>3$, and a complete
bipartite graph minus an edge $K_{m,n}^{-}$, $m,n>1$. The next result follows
directly from Corollary \ref{ECD} because the considered graphs have no
universal vertex and as we will see in Proposition \ref{one}, $\gamma
(G\diamond H)=1$ if and only if $G$ and $H$ has an universal vertex.

\begin{corollary}
\label{examples} For any graph $G$ and integers $k,m,n>1$ and $t>3$ we have

\begin{itemize}
\item[\emph{(i)}] $\gamma(G\diamond Q_{3})=2$;

\item[\emph{(ii)}] $\gamma(G\diamond Q_{3}^{-})=2$;

\item[\emph{(iii)}] $\gamma(G\diamond K_{m,n}^{-})=2$;

\item[\emph{(iv)}] $\gamma(G\diamond C_{3k})\leq k$;

\item[\emph{(v)}] $\gamma(G\diamond P_{t})\leq\left\lceil \frac{t}%
{3}\right\rceil $.
\end{itemize}
\end{corollary}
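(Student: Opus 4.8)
The statement collects five consequences of the ECD machinery already established, so the plan is to verify in each case that the graph in question is an ECD graph with domination number at least $2$ and no universal vertex, and then invoke Corollary~\ref{ECD} (together with Proposition~\ref{one}, cited in the preceding text, to pin down that the bound is not $1$). For items (i)--(iii) I would argue the inequality $\gamma(G\diamond\cdot)\le 2$ from Corollary~\ref{ECD} and then note that equality holds because none of $Q_{3}$, $Q_{3}^{-}$, $K_{m,n}^{-}$ has a universal vertex: indeed $\gamma(G\diamond H)=1$ forces $H$ (and $G$) to have a universal vertex by the forthcoming Proposition~\ref{one}, so $\gamma\ge 2$; combined with the upper bound this gives exactly $2$. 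For items (iv)--(v), I would only claim the upper bound, since $\gamma(G)$ is not controlled from below here.

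The first real step is to confirm the claimed ECD status and domination numbers of the specific graphs. For $Q_{3}$, the two antipodal vertices form an ECD set (their closed neighborhoods partition the $8$ vertices), so $\gamma(Q_{3})=2$ and $Q_{3}$ is ECD. For $Q_{3}^{-}$ (the cube minus one vertex), a symmetric check shows that a suitable pair of vertices still has disjoint closed neighborhoods covering all $7$ vertices, giving an ECD set of size $2$. For $K_{m,n}^{-}$ with $m,n>1$ (complete bipartite minus an edge $uv$ with $u$ in one side, $v$ in the other), the set $\{u,v\}$ is ECD: $N[u]$ covers $u$ and all of the $v$-side except $v$, while $N[v]$ covers $v$ and all of the $u$-side except $u$, and these are disjoint. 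For cycles $C_{3k}$ with $k>1$, taking every third vertex yields an ECD set of size $k$, so $\gamma(C_{3k})=k$ and the graph is ECD. For paths $P_{t}$ with $t>3$ one uses the standard efficient dominating set; when $t\equiv 1\pmod 3$ this is a genuine ECD set, and for the remaining residues one still gets a dominating set of size $\lceil t/3\rceil$ containing two vertices at distance $\ge 3$, so Corollary~\ref{Prop_case3} applies even if the graph is not literally ECD. In every case $\gamma\ge 2$ and there is no universal vertex, so Corollary~\ref{ECD} (or Corollary~\ref{Prop_case3}) gives $\gamma(G\diamond\cdot)\le\gamma(\cdot)$, which is the stated value in (i)--(iii) and the stated bound in (iv)--(v).

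The only subtlety, and the step I would be most careful about, is the lower bound $\gamma(G\diamond H)\ge 2$ needed to turn the $\le 2$ estimates of (i)--(iii) into equalities. The text explicitly forward-references Proposition~\ref{one} for exactly this: $\gamma(G\diamond H)=1$ holds precisely when both $G$ and $H$ have a universal vertex. Since $Q_{3}$, $Q_{3}^{-}$ and $K_{m,n}^{-}$ (for $m,n>1$) have no universal vertex, $\gamma(G\diamond H)\ge 2$ regardless of $G$, and equality follows. I would simply cite Proposition~\ref{one} here rather than reprove it. A minor point worth a sentence is the edge case in (v): for $t$ not congruent to $1$ modulo $3$, $P_{t}$ need not be ECD, so one should phrase the argument via Corollary~\ref{Prop_case3} (two dominating vertices at distance $\ge 3$, which exist whenever $t>3$) rather than via Corollary~\ref{ECD}; the conclusion $\gamma(G\diamond P_{t})\le\lceil t/3\rceil$ is unaffected.
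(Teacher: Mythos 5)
Your proposal is correct and follows essentially the same route as the paper: the text preceding the corollary simply notes that $Q_{3}$, $Q_{3}^{-}$, $K_{m,n}^{-}$, $C_{3k}$ and $P_{t}$ are ECD graphs without universal vertices, applies Corollary~\ref{ECD} for the upper bounds, and invokes Proposition~\ref{one} for the lower bound $\gamma(G\diamond H)\geq 2$ in (i)--(iii), exactly as you do. Your only deviation --- routing the path case through Corollary~\ref{Prop_case3} for $t\not\equiv 1\pmod 3$ --- is a harmless extra precaution, since in fact every path $P_{t}$ admits an efficient closed dominating set of size $\left\lceil t/3\right\rceil$, so Corollary~\ref{ECD} applies directly as the paper asserts.
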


\begin{figure}[h]
\centering\includegraphics[scale=0.4]{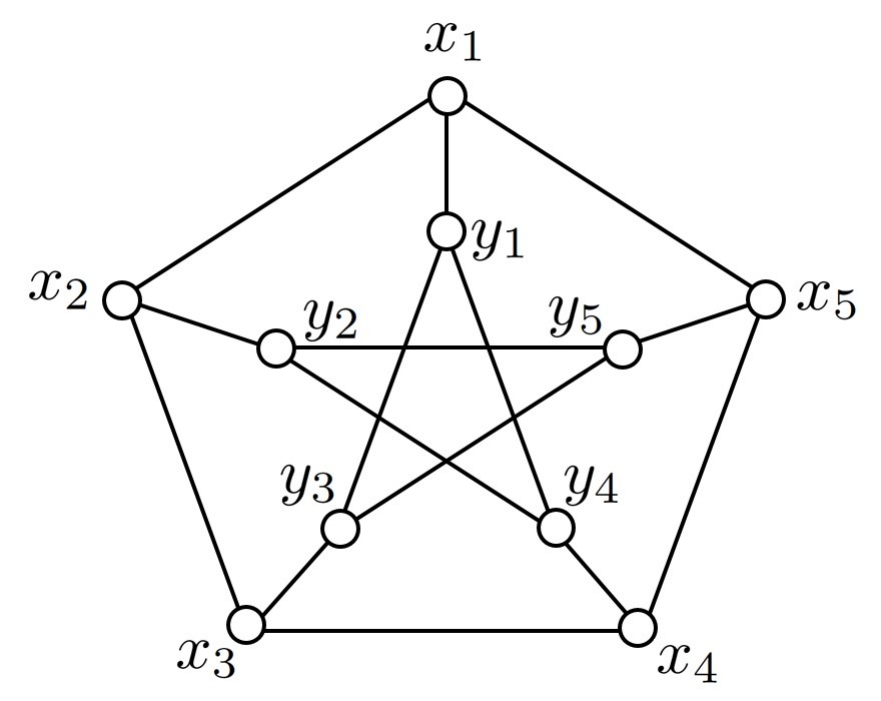}\caption{The Petersen graph
$P.$}%
\label{Fig_Petersen}%
\end{figure}

We see that the upper bound of Proposition \ref{SDCTD} is sharp for many
cases, however, there are graphs $G$, with diameter 2 and without universal
vertex, such that $\gamma(G\diamond G)<\bar{\gamma}(G)$. For instance, if we
consider the Petersen graph $P$ labeled as in Figure \ref{Fig_Petersen}, we
know that $\gamma(P)=3$ and that an open neighborhood of every vertex, say
$N_{P}(x_{1})$, represents a $\gamma(P)$-set and there are no other
$\gamma(P)$-sets. However, $N_{P}(x_{1})$ is not a SDCTD-set of $P$. It is
easy to check that, for instance, $\{x_{2},y_{1},y_{5},x_{4}\}$ forms a
SDCTD-set of $P$ and $\bar{\gamma}(P)=4$. By Proposition \ref{SDCTD} we have
\[
\gamma(P\diamond P)\leq4.
\]
On the other hand, one can observe that $\{(x_{1},y_{5}),(x_{4},y_{1}),$
$(y_{5},x_{3})\}$ is a $\gamma(P\diamond P)$-set, this means, $\gamma
(P\diamond P)\leq3$. Since $\mathrm{diam}(P)=2$, by Proposition \ref{total2},
we know that $\gamma_{t}(\overline{P})\geq3$, so $3=\min\{\gamma(P),\gamma
_{t}(\overline{P})\}\leq\gamma(P\diamond P)$. Therefore, $\gamma(P\diamond
P)=3$.

Since $\bar{\gamma}(C_{k})=3$ for $k\in\{4,5\}$ and $\bar{\gamma}%
(C_{k})=\left\lceil \frac{k}{3}\right\rceil $ for $k\geq6$, as a consequence
of Proposition \ref{SDCTD}, we have the following result.

\begin{corollary}
\label{peter} For any graph $G$ we have

\begin{itemize}
\item[\emph{(i)}] $\gamma(G\diamond C_{k})\leq3$ for $k\in\{4,5\}$;

\item[\emph{(ii)}] $\gamma(G\diamond C_{6})=2$;

\item[\emph{(iii)}] $\gamma(G\diamond C_{k})\leq\left\lceil \frac{k}%
{3}\right\rceil $ for $k\geq7$;

\item[\emph{(iv)}] $\gamma(G\diamond P)\leq4$.
\end{itemize}
\end{corollary}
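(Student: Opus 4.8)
The plan is to derive all four parts of Corollary~\ref{peter} as direct applications of Proposition~\ref{SDCTD}, once we have the stated values of $\bar\gamma(C_k)$ in hand. Concretely, Proposition~\ref{SDCTD} gives $\gamma(G\diamond C_k)\le\min\{\bar\gamma(G),\bar\gamma(C_k)\}\le\bar\gamma(C_k)$, so each upper bound reduces to computing $\bar\gamma(C_k)$, i.e.\ the smallest size of a set $D\subseteq V(C_k)$ that simultaneously dominates $C_k$ and totally dominates $\overline{C_k}$. For part~(ii) we additionally need the matching lower bound $\gamma(G\diamond C_6)\ge2$, and for part~(iv) we simply instantiate $k=5$ and recall that the Petersen graph (Figure~\ref{Fig_Petersen}) is also a $5$-regular\ldots\ no---rather we use the already-established $\bar\gamma(P)=4$ from the preceding paragraph, giving $\gamma(G\diamond P)\le\bar\gamma(P)=4$.

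The key computational step is therefore the evaluation of $\bar\gamma(C_k)$. I would argue as follows. First, for any $k\ge4$, the complement $\overline{C_k}$ has no isolated vertex (equivalently $C_k$ has no universal vertex), so $\bar\gamma(C_k)$ is finite. Since an SDCTD set must in particular be a dominating set of $C_k$, we get $\bar\gamma(C_k)\ge\gamma(C_k)=\lceil k/3\rceil$. For $k\ge6$ I would show this bound is attained: take the standard $\gamma(C_k)$-set $D$ consisting of every third vertex; because $k\ge6$, this set contains two vertices at distance at least $3$ in $C_k$, hence (as observed just before Corollary~\ref{Prop_case3}) $D$ is automatically an SDCTD set, giving $\bar\gamma(C_k)=\lceil k/3\rceil$. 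For $k=4,5$ we have $\lceil k/3\rceil=2$, but a dominating set of size $2$ in $C_4$ or $C_5$ necessarily consists of two vertices at distance $\le2$, whose closed $C_k$-neighborhoods therefore cover one another's non-neighbors only if\ldots\ more carefully: if $D=\{u,v\}$ totally dominates $\overline{C_k}$ then $uv\notin E(C_k)$ and no common $C_k$-neighbor of $u,v$ exists, i.e.\ $d_{C_k}(u,v)\ge3$, which is impossible in $C_4$ (diameter $2$) and in $C_5$ (diameter $2$). Hence $\bar\gamma(C_k)\ge3$ for $k\in\{4,5\}$; exhibiting any SDCTD set of size $3$ (e.g.\ three consecutive vertices, or for $C_5$ the set $\{1,2,4\}$ which dominates $C_5$ and, since $\overline{C_5}\cong C_5$, also totally dominates it) completes $\bar\gamma(C_4)=\bar\gamma(C_5)=3$.

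Assembling these: part~(i) follows from $\bar\gamma(C_k)=3$, $k\in\{4,5\}$; part~(iii) from $\bar\gamma(C_k)=\lceil k/3\rceil$, $k\ge7$; part~(ii)'s upper bound from $\bar\gamma(C_6)=\lceil6/3\rceil=2$, and its lower bound because $C_6$ has no universal vertex, so by Proposition~\ref{one} (forthcoming, characterizing $\gamma(G\diamond H)=1$) we cannot have $\gamma(G\diamond C_6)=1$ unless $G$ has a universal vertex---wait, that is not uniform in $G$. Instead I would note $\mathrm{diam}(C_6)=3$, so by Proposition~\ref{total2} $\gamma_t(\overline{C_6})=2$, whence Theorem~\ref{Lobound} gives $\gamma(G\diamond C_6)\ge\min\{\gamma(C_6),\gamma_t(\overline{C_6})\}$; but this could be $1$ if $\gamma(C_6)$\ldots\ no, $\gamma(C_6)=2$ and $\gamma_t(\overline{C_6})=2$, so the bound reads $\gamma(G\diamond C_6)\ge2$ regardless of $G$ (taking the other branch of the max or simply this branch, both factors being $\ge2$). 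Combined with the upper bound this yields equality, $\gamma(G\diamond C_6)=2$. Part~(iv) is the instance $\bar\gamma(P)=4$ already computed, via Proposition~\ref{SDCTD}.

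The main obstacle is really just the careful case analysis for small $k$ in computing $\bar\gamma(C_k)$---in particular making sure the $k=4,5$ lower bound $\bar\gamma\ge3$ is argued correctly from the total-domination-in-complement condition (which forces distance $\ge3$, impossible at diameter $2$), and that for $k=6$ the value is genuinely $2$ (here one checks directly that two antipodal vertices of $C_6$ form an SDCTD set: they dominate $C_6$, and being at distance $3$ they have no common neighbor and are non-adjacent, so they totally dominate $\overline{C_6}$). Everything else is a bookkeeping application of Proposition~\ref{SDCTD} together with the observation preceding Corollary~\ref{Prop_case3}. One small caveat: the statement $\bar\gamma(C_k)=\lceil k/3\rceil$ for $k\ge6$ should be double-checked at $k=6$ against the $\rho/\gamma$ gap, but since a diametral pair already works there, no subtlety arises.
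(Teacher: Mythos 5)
Your overall strategy is the same as the paper's: the paper likewise derives all four parts from Proposition~\ref{SDCTD} together with the values $\bar{\gamma}(C_{k})=3$ for $k\in\{4,5\}$, $\bar{\gamma}(C_{k})=\lceil k/3\rceil$ for $k\geq6$ and $\bar{\gamma}(P)=4$; the paper merely asserts these values, whereas you try to verify them. Your verification is correct for $k\geq6$ (the every-third-vertex dominating set contains a pair at distance $3$, hence is an SDCTD set by the observation preceding Corollary~\ref{Prop_case3}) and for $C_{5}$ (the witness $\{1,2,4\}$ works, since $\overline{C_{5}}\cong C_{5}$), and your lower bound in part~(ii) is sound in substance, although the route you discarded is actually the cleanest one: by Proposition~\ref{one}, $\gamma(G\diamond C_{6})=1$ would force $\gamma(C_{6})=1$, which fails for every $G$; also note that $\min\{\gamma(C_{6}),\gamma_{t}(\overline{C_{6}})\}$ is not literally a branch of Theorem~\ref{Lobound} --- the branch you need is $\min\{\gamma(C_{6}),\gamma_{t}(\overline{G})\}\geq2$.

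The genuine gap is at $k=4$. Three consecutive vertices of $C_{4}$ do not form an SDCTD set: $\overline{C_{4}}$ is the perfect matching $2K_{2}$ with edges $13$ and $24$, so the unique $\overline{C_{4}}$-neighbour of vertex $2$ is vertex $4$, and any set omitting $4$ fails to totally dominate $\overline{C_{4}}$. Since every vertex of $2K_{2}$ has a unique neighbour, in fact $\gamma_{t}(\overline{C_{4}})=4$, hence $\bar{\gamma}(C_{4})=4$ and no SDCTD set of size $3$ exists. (Incidentally, ``three consecutive vertices'' also fails for $C_{5}$: in $\overline{C_{5}}$ the neighbours of $2$ are $4$ and $5$, neither of which lies in $\{1,2,3\}$; only your second witness $\{1,2,4\}$ is valid there.) This is not a repairable slip in your argument: Proposition~\ref{SDCTD} cannot yield $\gamma(G\diamond C_{4})\leq3$, and the inequality itself is false in general, since Theorem~\ref{Lobound} gives $\gamma(G\diamond C_{4})\geq\min\{\gamma(G),\gamma_{t}(\overline{C_{4}})\}=\min\{\gamma(G),4\}$, which equals $4$ for, e.g., $G=P_{10}$. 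So part~(i) of the corollary fails at $k=4$ as stated; this error originates in the paper's asserted value $\bar{\gamma}(C_{4})=3$, and your attempt reproduces it via an incorrect witness instead of detecting it.
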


Let us now give an example of graph classes $G$ and $H$ for which
$\gamma(G\diamond H)$ is unbounded, even when none of them has a universal
vertex. Denote by $K_{2k}^{-}$ a graph $K_{2k}-M$, $k\geq3$, where $M$ is a
perfect matching in $K_{2k}$. Theorem \ref{Lobound} and Proposition
\ref{SDCTD} imply the following result.

\begin{corollary}
The following equalities hold

\begin{itemize}
\item[\emph{(i)}] $\gamma(K_{2k}^{-}\diamond P_{6k})=2k$ for any $k\geq3$;

\item[\emph{(ii)}] $\gamma(P_{6k}\diamond\overline{P_{4k}})=2k$ for any
$k\geq2$.
\end{itemize}
\end{corollary}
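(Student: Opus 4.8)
The strategy is to compute, for each of the two families, a lower bound via Theorem~\ref{Lobound} and a matching upper bound via Proposition~\ref{SDCTD}, so that both values are pinned to $2k$.

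For part (i), consider $G=K_{2k}^{-}$ and $H=P_{6k}$. First I would record the basic parameters: since $K_{2k}^{-}=K_{2k}-M$ for a perfect matching $M$, each vertex is non-adjacent to exactly one other vertex, so $\gamma(K_{2k}^{-})=2$ (any non-edge dominates) while $\overline{K_{2k}^{-}}$ is itself the perfect matching $M$, i.e.\ a disjoint union of $k$ edges, giving $\gamma_{t}(\overline{K_{2k}^{-}})=2k$ (a total dominating set of a union of $k$ edges must contain both endpoints of each edge). For the path, $\gamma(P_{6k})=\lceil 6k/3\rceil=2k$ and $\mathrm{diam}(P_{6k})=6k-1\geq 3$, so by Proposition~\ref{total2}, $\gamma_{t}(\overline{P_{6k}})=2$. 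Now Theorem~\ref{Lobound} with $G=K_{2k}^{-}$, $H=P_{6k}$ gives
\[
\gamma(K_{2k}^{-}\diamond P_{6k})\geq \max\{\min\{2,2\},\min\{2k,2k\}\}=2k .
\]
For the upper bound I would invoke Proposition~\ref{SDCTD} together with the stated fact $\bar\gamma(C_k)=\lceil k/3\rceil$ for $k\ge 6$; but here the cleaner route is $\bar\gamma(P_{6k})\le 2k$. Indeed $P_{6k}$ is an ECD graph (it is $P_t$ with $t>3$, mentioned just before Corollary~\ref{examples}) with $\gamma(P_{6k})=2k\ge 2$, and a $\gamma$-set of a long path automatically contains two vertices at distance $\ge 3$; by the observation preceding Corollary~\ref{Prop_case3}, such a $\gamma$-set is an SDCTD set, so $\bar\gamma(P_{6k})=2k$, and Proposition~\ref{SDCTD} yields $\gamma(K_{2k}^{-}\diamond P_{6k})\le \bar\gamma(P_{6k})=2k$. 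Combining the two bounds gives equality.

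For part (ii), take $G=P_{6k}$ and $H=\overline{P_{4k}}$, $k\ge 2$. I would again compute parameters. From part (i)'s reasoning, $\gamma(P_{6k})=2k$ and $\gamma_t(\overline{P_{6k}})=2$. For $H=\overline{P_{4k}}$, the key facts are $\gamma_t(\overline{H})=\gamma_t(P_{4k})=2k$ (total domination number of a path $P_n$ is $\lfloor n/2\rfloor + \lceil n/4\rceil - \lfloor n/4\rfloor$; one should just verify $\gamma_t(P_{4k})=2k$ directly, since $P_{4k}$ decomposes into $k$ copies of $P_4$, each needing its two middle vertices), while $\gamma(\overline{P_{4k}})$ is small—$\overline{P_{4k}}$ has diameter $2$ for $4k\ge 4$ (two endpoints of an edge of $P_{4k}$ have a common neighbor in $\overline{P_{4k}}$ unless they are the two ends of a $P_3$, but for $P_{4k}$ with $k\ge 2$ one checks $\gamma(\overline{P_{4k}})\le 3$, and in fact $=2$ for $4k\ge 5$ because a vertex of $P_{4k}$ and a non-neighbor at distance $2$ dominate $\overline{P_{4k}}$... this needs a short check). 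The point for the lower bound is only that $\min\{\gamma(P_{6k}),\gamma_t(\overline{\,\overline{P_{4k}}\,})\}=\min\{2k,\gamma_t(P_{4k})\}=\min\{2k,2k\}=2k$, so Theorem~\ref{Lobound} again gives $\gamma(P_{6k}\diamond\overline{P_{4k}})\ge 2k$. For the upper bound, apply Proposition~\ref{SDCTD} via $\bar\gamma(P_{6k})=2k$ exactly as in part~(i): $\gamma(P_{6k}\diamond\overline{P_{4k}})\le\bar\gamma(P_{6k})=2k$. Hence equality.

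The main obstacle is bookkeeping the auxiliary parameter values correctly, in particular $\gamma_t(P_n)$ for $n\in\{4k,6k\}$ and $\gamma_t(\overline{K_{2k}^{-}})=2k$, and confirming that the $\gamma$-sets of $P_{6k}$ that realize $\gamma=2k$ genuinely contain a pair of vertices at distance $\ge 3$ (true as soon as the path is long enough, $k\ge 1$), so that the SDCTD argument applies and $\bar\gamma(P_{6k})=\gamma(P_{6k})=2k$ rather than $\gamma(P_{6k})+2$. None of these is deep; the subtlety is only ensuring the minima in Theorem~\ref{Lobound} do not collapse below $2k$, which is why the factor lengths $6k$ and $4k$ are chosen exactly so that $\gamma$ of the path and $\gamma_t$ of the other complement both equal $2k$.
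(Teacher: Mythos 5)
Your proposal is correct and follows exactly the paper's route: the lower bound $2k=\min\{\gamma(P_{6k}),\gamma_{t}(\overline{K_{2k}^{-}})\}$ (resp.\ $\min\{\gamma(P_{6k}),\gamma_{t}(P_{4k})\}$) from Theorem~\ref{Lobound}, matched by the upper bound $\bar{\gamma}(P_{6k})=2k$ via Proposition~\ref{SDCTD}. The extra verifications you supply (the value $\gamma_{t}(\overline{K_{2k}^{-}})=2k$ for the perfect matching, $\gamma_{t}(P_{4k})=2k$, and that a $\gamma$-set of $P_{6k}$ contains two vertices at distance at least $3$ so it is an SDCTD set) are all accurate and merely make explicit what the paper's one-line chain of inequalities leaves to the reader.
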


\begin{proof}
These two equalities are satisfied because
\[
2k=\min\{2k,2k\}=\min\{\gamma(P_{6k}),\gamma_{t}(\overline{K_{2k}^{-}}%
)\}\leq\gamma(K_{2k}^{-}\diamond P_{6k})\leq\bar{\gamma}(P_{6k})=2k
\]
and
\[
2k=\min\{2k,2k\}=\min\{\gamma(P_{6k}),\gamma_{t}(P_{4k})\}\leq\gamma
(P_{6k}\diamond\overline{P_{4k}})\leq\bar{\gamma}(P_{6k})=\gamma(P_{6k})=2k.
\]

\end{proof}

Notice that in both families of the above corollary one factor is diameter two
graph and the other has unbounded diameter. Even with two graphs of diameter
two, we can find examples where $\gamma(G\diamond H)$ is not bounded. There
exist graphs $G$ such that $\mathrm{diam}(G)=2$ and $\gamma(G)\geq\frac
{\gamma_{t}(G)}{2}\geq2k$ (see \cite{Du}). For such graphs we have
$\gamma(G\diamond\overline{P_{4k}})\geq\min\{\gamma(G),\gamma_{t}%
(P_{4k})\}\geq2k$.

\section{Graphs with small values of $\gamma(G\diamond H)$}

In this section we describe modular products with small domination number. We
start by completely characterizing pairs of graphs $G$, $H$ such that the
domination number of their modular product equals $1$ or $2.$

\begin{proposition}
\label{one} For any graphs $G$ and $H$, $\gamma(G\diamond H)=1$ if and only if
$\gamma(G)=1=\gamma(H)$.
\end{proposition}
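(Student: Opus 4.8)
The plan is to prove the equivalence $\gamma(G\diamond H)=1 \iff \gamma(G)=1=\gamma(H)$ by settling each direction separately, using Observation~\ref{Obs_adjacency} as the main tool. For the backward direction, suppose $g$ is a universal vertex of $G$ and $h$ is a universal vertex of $H$; I will show $\{(g,h)\}$ dominates $G\diamond H$. Take any vertex $(g',h')\neq(g,h)$. Since $g\in N_G[g']$ and $h\in N_H[h']$ (both being universal), Observation~\ref{Obs_adjacency} tells us $(g',h')$ is dominated by $(g,h)$, so $\gamma(G\diamond H)\le 1$, and since the graph is nonempty the value is exactly $1$.

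For the forward direction, assume $\gamma(G\diamond H)=1$, so there is a vertex $(g,h)$ that dominates every other vertex. I claim $g$ is universal in $G$. Suppose not: there is $g'$ with $g'\notin N_G[g]$. Then consider the vertex $(g',h)$ (which is distinct from $(g,h)$). We have $g'\notin N_G[g]$ but $h\in N_H[h]$, so by Observation~\ref{Obs_adjacency} the vertex $(g',h)$ is \emph{not} dominated by $(g,h)$ — neither alternative ($g'\in N_G[g]$ and $h\in N_H[h]$, or $g'\notin N_G[g]$ and $h\notin N_H[h]$) holds. This contradicts that $\{(g,h)\}$ is dominating. Hence $g$ is universal in $G$, i.e. $\gamma(G)=1$. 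By the symmetric argument applied to $(g,h')$ for a hypothetical non-neighbor $h'$ of $h$ in $H$, we get that $h$ is universal in $H$, so $\gamma(H)=1$.

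There is essentially no obstacle here; the only point requiring a little care is handling the trivial edge cases (so that $\gamma$ is well defined and the single-vertex dominating sets exist), but these are immediate since we only need a distinct vertex to test domination, and if $G$ or $H$ were trivial the statement would be vacuous or trivial. I would write this up compactly as two short paragraphs mirroring the two implications, invoking Observation~\ref{Obs_adjacency} in both directions and noting that the forward argument is just the contrapositive of the requirement that $(g,h)$ dominate the "one-coordinate-changed" neighbors.
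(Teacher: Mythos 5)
Your proof is correct and follows essentially the same route as the paper: the forward direction tests the vertices $(g',h)$ with one coordinate changed, exactly as the paper does. The only cosmetic difference is that for the backward direction you verify directly via Observation~\ref{Obs_adjacency} that $\{(g,h)\}$ dominates, whereas the paper simply invokes Corollary~\ref{univ}; both are immediate.
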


\begin{proof}
If $D=\{(g_{1},h_{1})\}$ is a $\gamma(G\diamond H)$-set, since any vertex
$(g,h_{1})$ must be dominated by $D$, $g_{1}$ must be an universal vertex in
$G$, that is, $D_{G}=\{g_{1}\}$ is a dominating set in $G$. Analogously, we
can have the same with $D_{H}=\{h_{1}\}$. Finally, by Corollary \ref{univ}, we
have that $\gamma(G)=1=\gamma(H)$ implies $\gamma(G\diamond H)=1.$
\end{proof}

\begin{theorem}
\label{two} For any graphs $G$ and $H$, $\gamma(G\diamond H)=2$ if and only if
one of the following conditions holds

\vspace{0.2cm}

\begin{itemize}
\item[\emph{(i)}] $\gamma(G)+\gamma(H)=3$;

\vspace{0.2cm}

\item[\emph{(ii)}] $G$ or $H$ has an \emph{ECD} set of size 2.
\end{itemize}
\end{theorem}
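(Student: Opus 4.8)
The plan is to prove both directions of the equivalence, working from the already-established bounds. For the ``if'' direction, suppose first that (i) holds, so without loss of generality $\gamma(G)=1$ and $\gamma(H)=2$. Then Corollary \ref{univ} does not directly apply (it is $G$ that has a universal vertex), but its mirror does: $G$ having a universal vertex gives $\gamma(G\diamond H)=\gamma(H)=2$. Now suppose (ii) holds, say $G$ has an ECD set $D_G$ of size $2$. Then $\gamma(G)=2$, and since any ECD set of size at least $2$ contains two vertices at distance at least $3$ (their closed neighborhoods being disjoint and, in a connected component, forcing distance $\ge 3$; if they lie in different components the distance is $\infty\ge 3$), Corollary \ref{Prop_case3} gives $\gamma(G\diamond H)\le\gamma(G)=2$. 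Finally $\gamma(G\diamond H)\ge 2$ because $\gamma(G\diamond H)=1$ would force $\gamma(G)=1$ by Proposition \ref{one}, contradicting $\gamma(G)=2$. So $\gamma(G\diamond H)=2$ in both cases.

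For the ``only if'' direction, assume $\gamma(G\diamond H)=2$ and let $D=\{(g_1,h_1),(g_2,h_2)\}$ be a $\gamma(G\diamond H)$-set. By Proposition \ref{one}, not both of $\gamma(G),\gamma(H)$ equal $1$, so $\gamma(G)+\gamma(H)\ge 3$. If $\gamma(G)+\gamma(H)=3$ we are in case (i) and done, so assume $\gamma(G),\gamma(H)\ge 2$. The goal is then to show (ii) holds. Apply Corollary \ref{Cor_sufficient} to $D$: either $\{g_1,g_2\}$ dominates $G$ or $\{h_1,h_2\}$ totally dominates $\overline{H}$. In the first case $\{g_1,g_2\}$ is a $\gamma(G)$-set (since $\gamma(G)\ge 2$); I claim it is in fact an ECD set, i.e.\ $N_G[g_1]\cap N_G[g_2]=\emptyset$. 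Suppose not, so there is a vertex $g$ with $g\in N_G[g_1]\cap N_G[g_2]$. Now I want to produce a vertex of $G\diamond H$ not dominated by $D$, using Proposition \ref{nodom} with $I=\{1,2\}$: I need a vertex $(g,h)$ with $g$ adjacent-or-equal to both $g_1,g_2$ (have it) and $h$ adjacent-or-equal to neither $h_1$ nor $h_2$. Such an $h$ exists unless $\{h_1,h_2\}$ dominates $H$; and if $\{h_1,h_2\}$ does dominate $H$ then by the mirror of the same argument $\{h_1,h_2\}$ is a $\gamma(H)$-set and I run the symmetric argument to conclude $\{h_1,h_2\}$ is an ECD set of $H$, giving (ii). Otherwise the vertex $(g,h)$ exists and contradicts Proposition \ref{nodom}, so $\{g_1,g_2\}$ is an ECD set of $G$ and (ii) holds. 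The second case of Corollary \ref{Cor_sufficient} (where $\{h_1,h_2\}$ totally dominates $\overline H$, i.e.\ $\mathrm{diam}(H)\ge 3$ or $h_1,h_2$ lie in a way that no common $H$-neighbor exists) must likewise be pushed to yield (i) or (ii); here I expect to also exploit Proposition \ref{total2}.

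The main obstacle is the bookkeeping in the ``only if'' direction: one must carefully split on which clause of Corollary \ref{Cor_sufficient} holds for the particular $\gamma(G\diamond H)$-set $D$, and in the borderline subcases (e.g.\ $\{g_1,g_2\}$ dominates $G$ but $\{h_1,h_2\}$ does \emph{not} dominate $H$, versus both dominate) show that failure of the ECD property on \emph{both} sides lets Proposition \ref{nodom} manufacture an undominated vertex $(g,h)$. The key repeated device is: if neither $\{g_1,g_2\}$ nor $\{h_1,h_2\}$ is an ECD set of its graph, then pick $g\in N_G[g_1]\cap N_G[g_2]$ and $h\notin N_H[h_1]\cup N_H[h_2]$ (the latter exists precisely because $\{h_1,h_2\}$ is not dominating, which is forced once we know it is not an ECD $\gamma(H)$-set and $\gamma(H)\ge 2$ — here a small case analysis on whether $h_1,h_2$ are adjacent or equal is needed), and then $(g,h)$ violates Proposition \ref{nodom} with $I=[2]$. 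Symmetrically with $I=\emptyset$ one handles the co-direct side. Assembling these subcases, every possibility with $\gamma(G),\gamma(H)\ge 2$ forces an ECD set of size $2$ in $G$ or in $H$, completing the characterization. \hfill\rule{0.5em}{0.5em}
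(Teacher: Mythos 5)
Your ``if'' direction is fine and essentially matches the paper's (universal vertex via Corollary \ref{univ} for (i); an ECD set of size $2$ consists of two vertices with disjoint closed neighborhoods, hence at distance at least $3$, so Corollary \ref{Prop_case3} applies for (ii); the lower bound $\gamma(G\diamond H)\geq 2$ from Proposition \ref{one}). The ``only if'' direction, however, has a genuine gap exactly at the sub-case you flag as ``bookkeeping''. Your only device for manufacturing an undominated vertex is to take $g\in N_G[g_1]\cap N_G[g_2]$ together with $h\notin N_H[h_1]\cup N_H[h_2]$, and such an $h$ exists only when $\{h_1,h_2\}$ does \emph{not} dominate $H$. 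In the sub-case where $\{g_1,g_2\}$ dominates $G$ \emph{and} $\{h_1,h_2\}$ dominates $H$, you say you will ``run the symmetric argument'' to show $\{h_1,h_2\}$ is ECD; but the symmetric argument needs a vertex $g\notin N_G[g_1]\cup N_G[g_2]$, which does not exist precisely because $\{g_1,g_2\}$ dominates $G$. The argument is circular there. Moreover, the parenthetical claim that ``$\{h_1,h_2\}$ is not dominating is forced once we know it is not an ECD $\gamma(H)$-set and $\gamma(H)\geq 2$'' is false: the two internal vertices of $P_4$ form a dominating $\gamma$-set that is not ECD. Finally, the second branch of Corollary \ref{Cor_sufficient} (where $\{h_1,h_2\}$ totally dominates $\overline H$ but $\{g_1,g_2\}$ does not dominate $G$) is only gestured at, not carried out.

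The missing idea is that when $\gamma(G),\gamma(H)\geq 2$ the ``both projections dominate'' situation is outright \emph{impossible} for a dominating pair $D=\{(g_1,h_1),(g_2,h_2)\}$, and showing this needs a \emph{different} undominated vertex than the one your device produces: since $\gamma(G)\geq 2$, $g_1$ is not universal, so there is $g\in N_G[g_2]\setminus N_G[g_1]$; since $\gamma(H)\geq 2$, $h_2$ is not universal, so there is $h\in N_H[h_1]\setminus N_H[h_2]$; by Observation \ref{Obs_adjacency} the vertex $(g,h)$ is dominated by neither $(g_1,h_1)$ nor $(g_2,h_2)$. With that in hand, in the branch where $\{g_1,g_2\}$ dominates $G$ one knows $\{h_1,h_2\}$ does not dominate $H$, and your device then does yield that $\{g_1,g_2\}$ is ECD. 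The paper avoids this trap by organizing the converse around the shape of the $\gamma(G\diamond H)$-set (whether the two vertices share a coordinate, whether $g_1g_2$ or $h_1h_2$ is an edge, and whether the relevant vertices are universal), deriving $N_H[h_1]\subseteq N_H[h_2]$ in the key sub-case rather than invoking Corollary \ref{Cor_sufficient}; your route through Proposition \ref{nodom} can be made to work, but only after the impossibility of the ``both dominate'' case and the second branch of Corollary \ref{Cor_sufficient} are actually proved.
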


\begin{proof}
If $D_{G}=\{g_{1},g_{2}\}$ is a $\gamma(G)$-set and $h_{1}$ is a universal
vertex in $H$, then $\gamma(G\diamond H)=2$ by Corollary \ref{univ}. If
$D_{G}=\{g_{1},g_{2}\}$ is an ECD set of $G$, then $\gamma(G\diamond H)\leq2$
by Corollary \ref{ECD}. The equality follows from Proposition \ref{one}. The
symmetric conditions on factors $G$ and $H$ conclude this direction.
\vspace{0.2cm}

Now, suppose that $\gamma(G\diamond H)=2$, and consider the following two cases.

\vspace{0.2cm}

\noindent\textbf{Case 1:} $D=\{(g_{1},h_{1}),(g_{2},h_{1})\}$\emph{ is a
}$\gamma(G\diamond H)$\emph{-set.} Since $(g,h_{1})$ must be dominated for any
$g\in V(G)$, we deduce that $D_{G}=\{g_{1},g_{2}\}$ is a dominating set in
$G$. If $h_{1}$ is a universal vertex, then we have (i). If $h_{1}$ is not a
universal vertex, there exists $h\in V(H)$ which is not adjacent to $h_{1}$
and $g_{1}g_{2}\notin E(G)$ to dominate $(g_{1},h)$. Finally, if $g\in
N_{G}(g_{1})$, to dominate $(g,h)$, we need $g\notin N_{G}(g_{2})$. Therefore,
$N_{G}(g_{1})\cap N_{G}(g_{2})=\emptyset$ and, consequently, $d_{G}%
(g_{1},g_{2})=3$ and $D_{G}$ is an ECD set of $G$. If $D=\{(g_{1}%
,h_{1}),(g_{1},h_{2})\}$ is a $\gamma(G\diamond H)$-set, then we proceed symmetrically.

\vspace{0.2cm}

\noindent\textbf{Case 2:} $D=\{(g_{1},h_{1}),(g_{2},h_{2})\}$\emph{ is a
}$\gamma(G\diamond H)$\emph{-set where }$g_{1}\neq g_{2}$\emph{ and }%
$h_{1}\neq h_{2}$\emph{.} To dominate the vertex $(g_{1},h_{2})$ we need
$g_{1}g_{2}\in E(G)$ or $h_{1}h_{2}\in E(H)$. By symmetry, we suppose for
instance that $h_{1}h_{2}\in E(H)$.

\vspace{0.2cm}

\noindent\textbf{Case 2.1:} $g_{1}$\emph{ }(\emph{or }$g_{2}$)\emph{ is not a
universal vertex.} If we take $g\notin N_{G}[g_{1}]$, then $(g,h_{1})$ must be
dominated by $(g_{2},h_{2})$ by a Cartesian or a direct edge. Consequently,
$g\in N_{G}[g_{2}]$ and $\{g_{1},g_{2}\}$ is a dominating set in $G$.
Moreover, for every $h\in N_{H}(h_{1})$, since $(g,h)$ must be dominated by
$(g_{2},h_{2})$ by a Cartesian or a direct edge, we have that $h\in
N_{H}[h_{2}]$. Hence, $N_{H}[h_{1}]\subseteq N_{H}[h_{2}]$. If $h_{2}$ is a
universal vertex in $H$, then we have (i). If $N_{H}[h_{2}]\neq V(H)$, since
$V(G)\times(V(H)\setminus N_{H}[h_{2}])$ is dominated by $D$, then
$N_{G}[g_{1}]\cap N_{G}[g_{2}]=\emptyset$, thus $d_{G}(g_{1},g_{2})=3$,
$\{g_{1},g_{2}\}$ is an ECD set and we have (ii).

\vspace{0.2cm}

\noindent\textbf{Case 2.2:} $g_{1}$\emph{ and }$g_{2}$\emph{ are universal
vertices in }$G$\emph{.} Since any vertex $(g,h)$ must be dominated by $D$,
$h$ must be adjacent to $h_{1}$ or to $h_{2}$. If there exists a universal
vertex in $H$, we have a contradiction with Proposition \ref{one}. Therefore,
$\{h_{1},h_{2}\}$ is a minimum dominating set in $H$ and we have (i).

\vspace{0.2cm}

Hence, in each of the two cases we have established that one of the conditions
(i) and (ii) holds, so we are finished.
\end{proof}

Next, we give a characterization of graphs $G$ and $H$ such that the
domination number of their modular product is at least $3.$

\begin{corollary}
\label{biggertwo} For any graphs $G$ and $H$, $\gamma(G\diamond H)\geq3$ if
and only if the following conditions hold

\begin{itemize}
\vspace{0.2cm}

\item[\emph{(i)}] $\gamma(G)+\gamma(H)\geq4$;

\vspace{0.2cm}

\item[\emph{(ii)}] neither $G$ nor $H$ has an ECD set of size 2.
\end{itemize}
\end{corollary}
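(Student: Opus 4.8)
The plan is to derive this corollary purely as the logical negation of the two preceding characterizations: Proposition \ref{one} handles the case $\gamma(G\diamond H)=1$, and Theorem \ref{two} handles $\gamma(G\diamond H)=2$, so $\gamma(G\diamond H)\geq 3$ holds precisely when neither of those situations occurs. No new graph-theoretic argument is needed; the work is entirely in translating the disjunctions correctly.

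First I would record that, since $\gamma(G)\geq 1$ and $\gamma(H)\geq 1$ for any graphs without isolated handling needed here, Proposition \ref{one} says $\gamma(G\diamond H)=1$ is equivalent to $\gamma(G)+\gamma(H)=2$. Hence $\gamma(G\diamond H)\geq 2$ is equivalent to $\gamma(G)+\gamma(H)\geq 3$. Next, Theorem \ref{two} says $\gamma(G\diamond H)=2$ is equivalent to the disjunction of (a) $\gamma(G)+\gamma(H)=3$ and (b) $G$ or $H$ has an ECD set of size $2$. Therefore $\gamma(G\diamond H)\geq 3$ is equivalent to the conjunction of $\gamma(G)+\gamma(H)\geq 3$ with the negation of (a)$\vee$(b), i.e. with $\gamma(G)+\gamma(H)\neq 3$ and with ``neither $G$ nor $H$ has an ECD set of size $2$''.

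Finally I would observe that $\gamma(G)+\gamma(H)\geq 3$ together with $\gamma(G)+\gamma(H)\neq 3$ is exactly $\gamma(G)+\gamma(H)\geq 4$, which is condition (i) of the corollary, while the remaining clause is precisely condition (ii). This completes the equivalence. The only point requiring care — and the sole ``obstacle'' — is making sure that when $\gamma(G)+\gamma(H)=3$ one of $G,H$ has domination number $1$ and hence a universal vertex but no ECD set of size $2$ among the two factors is forced or excluded incorrectly; but since Theorem \ref{two} already packages case (i) as a standalone sufficient condition, the negation is clean and no such subtlety actually arises. I would therefore keep the proof to a few lines invoking Proposition \ref{one} and Theorem \ref{two} directly.
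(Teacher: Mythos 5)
Your proposal is correct and matches the paper's intent exactly: the paper states this corollary without proof, treating it as the immediate logical negation of Proposition \ref{one} and Theorem \ref{two}, which is precisely the argument you carry out. The bookkeeping ($\gamma(G)+\gamma(H)\geq 3$ combined with $\gamma(G)+\gamma(H)\neq 3$ giving condition (i), and the negation of the ECD clause giving condition (ii)) is accurate.
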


Since we have characterized all graphs $G$ and $H$ such that the domination
number of their modular product is one or two, from now on, we suppose that
$G$ and $H$ satisfy the conditions given in Corollary \ref{biggertwo}. Next,
we are going to characterize graphs $G$ and $H$ such that $\gamma(G\diamond
H)=3$.

It was proved in \cite[Theorem 3.5]{NoRa} that two graphs $G$ and $H$ with
packing numbers at least two have $\gamma(G\diamond H)\leq4$. Since every
graph with packing number at least two has diameter at least three, we switch
to diameters and improve that bound by one. For this we first need the
following lemma.

\begin{lemma}
\label{red} Let $G$ and $H$ be two graphs. If $g_{1},g_{2}\in V(G)$,
$g_{1}\neq g_{2}$, $h_{1},h_{2}\in V(H)$, $h_{1}\neq h_{2}$, then
\[
N_{G\diamond H}[\{(g_{1},h_{1}),(g_{1},h_{2}),(g_{2},h_{1}),(g_{2}%
,h_{2})\}]=N_{G\diamond H}[\{(g_{1},h_{1}),(g_{1},h_{2}),(g_{2},h_{1})\}].
\]

\end{lemma}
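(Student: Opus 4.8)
The plan is to reduce the claimed set equality to a single containment. The inclusion
\[
N_{G\diamond H}[\{(g_{1},h_{1}),(g_{1},h_{2}),(g_{2},h_{1})\}]\subseteq N_{G\diamond H}[\{(g_{1},h_{1}),(g_{1},h_{2}),(g_{2},h_{1}),(g_{2},h_{2})\}]
\]
is immediate from monotonicity of the closed neighborhood, so it suffices to prove the reverse inclusion. Since $N_{G\diamond H}[\{(g_{1},h_{1}),(g_{1},h_{2}),(g_{2},h_{1}),(g_{2},h_{2})\}]$ is the union of $N_{G\diamond H}[\{(g_{1},h_{1}),(g_{1},h_{2}),(g_{2},h_{1})\}]$ with $N_{G\diamond H}[(g_{2},h_{2})]$, the task reduces to showing that every vertex of $G\diamond H$ dominated by $(g_{2},h_{2})$ is also dominated by at least one of $(g_{1},h_{1})$, $(g_{1},h_{2})$, $(g_{2},h_{1})$.

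To establish this I would fix an arbitrary vertex $(g,h)$ dominated by $(g_{2},h_{2})$ and encode the relevant adjacency data by four Boolean values: for $i\in\{1,2\}$ let $a_{i}=1$ if $g\in N_{G}[g_{i}]$ and $a_{i}=0$ otherwise, and let $b_{i}=1$ if $h\in N_{H}[h_{i}]$ and $b_{i}=0$ otherwise. By Observation \ref{Obs_adjacency}, for every pair of indices $i,j\in\{1,2\}$ the vertex $(g,h)$ is dominated by $(g_{i},h_{j})$ precisely when $a_{i}=b_{j}$. Thus the hypothesis reads $a_{2}=b_{2}$, and the goal is to show that at least one of $a_{1}=b_{1}$, $a_{1}=b_{2}$, $a_{2}=b_{1}$ holds.

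The finishing step is a one-line Boolean observation: if both $a_{1}\neq b_{1}$ and $a_{1}\neq b_{2}$, then, all values lying in $\{0,1\}$, we get $b_{1}=b_{2}$, and combining with $a_{2}=b_{2}$ gives $a_{2}=b_{1}$. Hence one of the three equalities always holds, which yields the required domination and proves the lemma. There is essentially no obstacle here; the only points worth care are to invoke Observation \ref{Obs_adjacency} in its closed-neighborhood form, so that vertices of the dominating set itself (in particular $(g_{2},h_{2})$) are subsumed by the same equality $a_{i}=b_{j}$, and to note that the hypotheses $g_{1}\neq g_{2}$ and $h_{1}\neq h_{2}$ are not actually needed for the argument --- they merely single out the ``grid'' configuration of four vertices that one wishes to shrink to three.
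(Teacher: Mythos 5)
Your proposal is correct and follows essentially the same route as the paper: both reduce the equality to showing $N_{G\diamond H}[(g_{2},h_{2})]\subseteq N_{G\diamond H}[\{(g_{1},h_{1}),(g_{1},h_{2}),(g_{2},h_{1})\}]$ and then apply Observation \ref{Obs_adjacency} with a case analysis on membership of $g$ in $N_{G}[g_{1}]$ and of $h$ in $N_{H}[h_{1}]$. Your Boolean encoding $a_{i}=b_{j}$ merely compresses the paper's explicit two-case, three-subcase analysis into a pigeonhole argument on $\{0,1\}$; the substance is identical, and your side remarks (closed-neighborhood form of the observation, non-necessity of $g_{1}\neq g_{2}$ and $h_{1}\neq h_{2}$) are accurate.
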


\begin{proof}
Let us see that any vertex $(g,h)\in N_{G\diamond H}[(g_{2},h_{2})]$ belongs
to $N_{G\diamond H}[\{(g_{1},h_{1}),(g_{1},h_{2}),$ $(g_{2},h_{1})\}]$.
Firstly, we suppose that $(g,h)\in N_{G}[g_{2}]\times N_{H}[h_{2}]$. If $g\in
N_{G}[g_{1}]$, then $(g,h)$ is equal or adjacent to $(g_{1},h_{2})$. If $h\in
N_{H}[h_{1}]$, then $(g,h)$ is equal or adjacent to $(g_{2},h_{1})$. If
$g\notin N_{G}[g_{1}]$ and $h\notin N_{H}[h_{1}]$, then $(g,h)$ is adjacent to
$(g_{1},h_{1})$.

\vspace{0.2cm}

Secondly, we suppose that $g\notin N_{G}[g_{2}]$ and $h\notin N_{H}[h_{2}]$.
If $g\notin N_{G}[g_{1}]$, then $(g,h)$ is adjacent to $(g_{1},h_{2})$. If
$h\notin N_{H}(h_{1})$, then $(g,h)$ is adjacent to $(g_{2},h_{1})$. If $g\in
N_{G}[g_{1}]$ and $h\in N_{H}[h_{1}]$, then $(g,h)$ is equal or adjacent to
$(g_{1},h_{1})$.
\end{proof}

We can now proceed by giving the following sufficient condition for the
domination number of the modular product of graphs $G$ and $H$ to be at most
$3.$

\begin{proposition}
\label{diameter3} If $G$ and $H$ are graphs with diameter at least three, then
$\gamma(G\diamond H)\leq3$.
\end{proposition}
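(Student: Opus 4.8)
The plan is to exploit the fact that $\mathrm{diam}(G)\geq 3$ and $\mathrm{diam}(H)\geq 3$ each give, via Proposition~\ref{total2}, a pair of vertices at distance at least $3$; equivalently, $\gamma_t(\overline{G})=2$ and $\gamma_t(\overline{H})=2$. Pick $g_1,g_2\in V(G)$ with $d_G(g_1,g_2)\geq 3$ and $h_1,h_2\in V(H)$ with $d_H(h_1,h_2)\geq 3$. Then $\{g_1,g_2\}$ is a total dominating set of $\overline{G}$ and $\{h_1,h_2\}$ is a total dominating set of $\overline{H}$. The natural candidate dominating set in $G\diamond H$ is $D_0=\{(g_1,h_1),(g_1,h_2),(g_2,h_1),(g_2,h_2)\}$, which is the $2\times 2$ ``box'' whose $G$-projection totally dominates $\overline{G}$ and whose $H$-projection totally dominates $\overline{H}$.

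First I would check that $D_0$ is a dominating set of $G\diamond H$. Take any $(g,h)$. If $g\notin N_G[g_1]$ and $g\notin N_G[g_2]$ is impossible since $\{g_1,g_2\}$ totally dominates $\overline G$, so $g$ is non-adjacent (in $G$) to at least one of $g_1,g_2$ — say $g\notin N_G[g_i]$; similarly $h\notin N_H[h_j]$ for some $j$. Then $(g,h)$ is dominated by $(g_i,h_j)\in D_0$ via a co-direct edge (or equality), using Observation~\ref{Obs_adjacency}. Hence $D_0$ dominates, giving $\gamma(G\diamond H)\leq 4$. To get down to $3$, apply Lemma~\ref{red}: since $g_1\neq g_2$ and $h_1\neq h_2$, we have $N_{G\diamond H}[D_0]=N_{G\diamond H}[\{(g_1,h_1),(g_1,h_2),(g_2,h_1)\}]$, so the three-vertex set $D=\{(g_1,h_1),(g_1,h_2),(g_2,h_1)\}$ is already a dominating set, and $\gamma(G\diamond H)\leq 3$.

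The only real work is the verification that $D_0$ dominates, and that verification is exactly the case analysis just sketched: the key point is that the complement-total-domination property of $\{g_1,g_2\}$ (resp.\ $\{h_1,h_2\}$) guarantees that for \emph{every} $g$ (resp.\ $h$) there is an index $i$ (resp.\ $j$) with $g\notin N_G[g_i]$ (resp.\ $h\notin N_H[h_j]$), which is precisely what the ``non-edge'' clause of Observation~\ref{Obs_adjacency} needs. I do not anticipate a genuine obstacle here; the one thing to be careful about is that $d_G(g_1,g_2)\geq 3$ really does yield that no vertex of $G$ lies in $N_G[g_1]\cap N_G[g_2]$ (a common neighbor would give distance $2$), which is what makes $\{g_1,g_2\}$ totally dominate $\overline G$ — and this is exactly the content of Proposition~\ref{total2}. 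Alternatively, one could phrase the whole argument through Theorem~\ref{Tm79}(ii) applied to the snake $\{(g_1,h_1),(g_2,h_1),(g_2,h_2)\}$ together with Lemma~\ref{red}, but the direct check via Observation~\ref{Obs_adjacency} is shortest.
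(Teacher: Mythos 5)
Your proof is correct and follows essentially the same route as the paper: choose $g_1,g_2$ at distance at least $3$ in $G$ and $h_1,h_2$ at distance at least $3$ in $H$, check that the $2\times 2$ box $\{(g_i,h_j):i,j\in[2]\}$ dominates $G\diamond H$ via co-direct edges, and then invoke Lemma~\ref{red} to drop one vertex. The extra framing through $\gamma_t(\overline{G})=\gamma_t(\overline{H})=2$ and Observation~\ref{Obs_adjacency} is just a slightly more explicit phrasing of the same argument.
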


\begin{proof}
Let $g_{1},g_{2}\in V(G)$ and $h_{1},h_{2}\in V(H)$ be four vertices such that
$d_{G}(g_{1},g_{2})\geq3$ and $d_{H}(h_{1},h_{2})\geq3$. For any vertex
$(g,h)\in V(G\diamond H)$, there exist $i,j\in\lbrack2]$ such that $g\notin
N_{G}(g_{i})$ and $h\notin N_{H}(h_{j})$, so $(g,h)$ is adjacent to
$(g_{i},h_{j})$ by a co-direct edge. Therefore, the set $D=\{(g_{1}%
,h_{1}),(g_{1},h_{2}),(g_{2},h_{1}),(g_{2},h_{2})\}$ is a dominating set in
$G\diamond H$. By Lemma \ref{red}, we conclude that $\gamma(G\diamond H)\leq3$.
\end{proof}

The above proposition together with Corollary \ref{biggertwo} yields the
following condition under which $\gamma(G\diamond H)=3.$

\begin{corollary}
If $G$ and $H$ are graphs with diameter at least three and they do not have
ECD sets with size 2, then $\gamma(G\diamond H)=3$.
\end{corollary}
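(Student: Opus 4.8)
The plan is to invoke the two immediately preceding results and combine them. The corollary asserts that if $\mathrm{diam}(G)\geq 3$, $\mathrm{diam}(H)\geq 3$, and neither $G$ nor $H$ has an ECD set of size $2$, then $\gamma(G\diamond H)=3$. This is just the conjunction of an upper bound and a lower bound, so the proof is a one-line synthesis once both ingredients are in place.

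First I would establish the upper bound $\gamma(G\diamond H)\leq 3$. This is exactly Proposition \ref{diameter3}, whose hypothesis—both factors having diameter at least three—is assumed here. So no work is needed beyond citing it.

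Next I would establish the lower bound $\gamma(G\diamond H)\geq 3$. For this I would appeal to Corollary \ref{biggertwo}, which says $\gamma(G\diamond H)\geq 3$ if and only if $\gamma(G)+\gamma(H)\geq 4$ and neither $G$ nor $H$ has an ECD set of size $2$. The second condition is assumed directly. For the first condition, note that $\mathrm{diam}(G)\geq 3$ forces $\gamma(G)\geq 2$ (a single vertex cannot dominate two vertices at distance $\geq 3$), and likewise $\gamma(H)\geq 2$, so $\gamma(G)+\gamma(H)\geq 4$. Hence Corollary \ref{biggertwo} applies and gives $\gamma(G\diamond H)\geq 3$.

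Combining the two bounds yields $\gamma(G\diamond H)=3$. There is essentially no obstacle here: the only mild point worth spelling out is the observation that diameter at least three implies domination number at least two, which is elementary. I would write the proof as follows.

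\begin{proof}
By Proposition \ref{diameter3}, the assumption that both $G$ and $H$ have diameter at least three gives $\gamma(G\diamond H)\leq 3$. For the reverse inequality, observe that $\mathrm{diam}(G)\geq 3$ implies $\gamma(G)\geq 2$, since a single vertex dominates no pair of vertices at distance at least three in $G$; similarly $\gamma(H)\geq 2$, hence $\gamma(G)+\gamma(H)\geq 4$. Together with the hypothesis that neither $G$ nor $H$ has an ECD set of size $2$, Corollary \ref{biggertwo} yields $\gamma(G\diamond H)\geq 3$. Therefore $\gamma(G\diamond H)=3$.
\end{proof}
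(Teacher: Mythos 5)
Your proof is correct and follows exactly the route the paper intends: the upper bound comes from Proposition \ref{diameter3} and the lower bound from Corollary \ref{biggertwo}, with the only detail to check being that $\mathrm{diam}(G)\geq 3$ forces $\gamma(G)\geq 2$ (no universal vertex), which you handle correctly. The paper states this corollary as an immediate consequence of those two results without further argument, so your write-up simply makes the same synthesis explicit.
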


Another condition on graphs $G$ and $H$ under which $\gamma(G\diamond H)=3$ is
given in the following proposition, and it stems from Corollary
\ref{biggertwo} and Lemma \ref{red}.

\begin{proposition}
\label{2} If $G$ and $H$ are two graphs such that $\gamma(G)=2=\gamma(H)$ and
they do not have ECD sets with size 2, then $\gamma(G\diamond H)=3$.
\end{proposition}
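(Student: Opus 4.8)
The plan is to combine the lower bound from Corollary \ref{biggertwo} with an explicit construction of a dominating set of size $3$ in $G\diamond H$. Since $\gamma(G)=2=\gamma(H)$ gives $\gamma(G)+\gamma(H)=4\geq4$, and by hypothesis neither $G$ nor $H$ has an ECD set of size $2$, Corollary \ref{biggertwo} immediately yields $\gamma(G\diamond H)\geq3$. So the entire work lies in producing a dominating set of $G\diamond H$ of cardinality $3$.

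First I would fix a $\gamma(G)$-set $\{g_1,g_2\}$ and a $\gamma(H)$-set $\{h_1,h_2\}$; necessarily $g_1\neq g_2$ and $h_1\neq h_2$. Consider the four-vertex set $\{(g_1,h_1),(g_1,h_2),(g_2,h_1),(g_2,h_2)\}$. By Lemma \ref{red}, its closed neighborhood in $G\diamond H$ coincides with that of the three-vertex set $D=\{(g_1,h_1),(g_1,h_2),(g_2,h_1)\}$. Hence it suffices to show the four-vertex set is dominating. For an arbitrary $(g,h)\in V(G\diamond H)$: since $\{g_1,g_2\}$ dominates $G$, we have $g\in N_G[g_i]$ for some $i\in[2]$, and since $\{h_1,h_2\}$ dominates $H$, we have $h\in N_H[h_j]$ for some $j\in[2]$. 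Then $g\in N_G[g_i]$ and $h\in N_H[h_j]$, so by Observation \ref{Obs_adjacency} the vertex $(g,h)$ is equal or adjacent to $(g_i,h_j)$, which lies in the four-vertex set. Thus the four-vertex set dominates $G\diamond H$, and by Lemma \ref{red} so does $D$, giving $\gamma(G\diamond H)\leq3$. Combined with the lower bound, $\gamma(G\diamond H)=3$.

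There is essentially no obstacle here: the domination of the $2\times2$ grid of vertices is forced purely by the fact that $\{g_1,g_2\}$ and $\{h_1,h_2\}$ are dominating sets in the respective factors (using only Cartesian and direct edges, in fact), and Lemma \ref{red} does the reduction from four to three vertices for free. The only point to be careful about is verifying that $g_1\neq g_2$ and $h_1\neq h_2$ — but this is automatic since $\gamma(G)=\gamma(H)=2$ forces any minimum dominating set to have two distinct elements. If one wanted an even more streamlined argument, one could observe directly that the proof of Proposition \ref{diameter3} already contains the reduction via Lemma \ref{red}, and the present statement is the analogue where the four-vertex "product" set is dominating for a different reason (two dominating sets rather than two pairs of far-apart vertices).
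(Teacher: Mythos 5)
Your proposal is correct and follows essentially the same route as the paper: the paper also takes the four-vertex set $\{(g_1,h_1),(g_1,h_2),(g_2,h_1),(g_2,h_2)\}$, observes it is dominating, and invokes Lemma \ref{red} together with Corollary \ref{biggertwo} to conclude. The only difference is that you spell out via Observation \ref{Obs_adjacency} the domination step that the paper dismisses as ``clear,'' which is a harmless (indeed welcome) addition.
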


\begin{proof}
If $\{g_{1},g_{2}\}$ is a $\gamma(G)$-set and $\{h_{1},h_{2}\}$ is a
$\gamma(H)$-set, it is clear that
\[
D=\{(g_{1},h_{1}),(g_{1},h_{2}),(g_{2},h_{1}),(g_{2},h_{2})\}
\]
is a dominating set in $G\diamond H$. By Corollary \ref{biggertwo} and Lemma
\ref{red}, we get the result.
\end{proof}

%$$
%pr[v,D]=\{u\in V\setminus D: u\in N(v) \mbox{ and } u\notin N(w)
%\mbox{ for any } w\in D\setminus\{v\}\}.
%$$

%\begin{lemma}\label{dom3dom2}
%Let $D_G=\{g_1,g_2,g_3\}$ be a $\gamma(G)$-set and let
%$D_H=\{h_1,h_2\}$ be a $\gamma(H)$-set. If there
%exists $i\in[3]$ such that $pr[g_i,D_G]=\emptyset$ and
%$N[g_j]\cap N[g_k]\subseteq N[g_i]$ for $\{i,j,k\}=[3]$, then
%$\gamma(G\diamond H)\leq3$.
%\end{lemma}
%\begin{proof}
%It is enough to check that $D=\{(g_j,h_1),(g_k,h_1),(g_i,h_2)\}$ is
%a dominating set in $G\diamond H$.
%\end{proof}

Next we characterize all pairs of graphs $G$ and $H$ with $\gamma(G\diamond
H)=3$. For this we need the following notation. Let $G$ be a graph and
$D=\{g_{1},\ldots,g_{q}\}\subseteq V(G).$ For a set $I\subseteq\lbrack q]$
with $q$ being fixed, let us denote $I^{c}=[q]\backslash I,$ and we define
\[
A_{G}(I,D)=\{g\in G:N_{G}[g]\cap D=\{g_{i}:i\in I\}\}.
\]
We further define $a_{G}(I,D)=1$ if $A_{G}(I,D)\not =\emptyset,$ and
$a_{G}(I,D)=0$ otherwise.

\begin{theorem}
\label{dom3} Let $G$ and $H$ be two graphs such that $\gamma(G\diamond
H)\geq3$. Then, $\gamma(G\diamond H)=3$ if and only if at least one of the
following conditions holds

\vspace{0.1cm}

\begin{itemize}
\item[\emph{(i)}] $\gamma(G)+\gamma(H)=4$;

\vspace{0.2cm}

\item[\emph{(ii)}] $\bar{\gamma}(G)=3$ or $\bar{\gamma}(H)=3$;

\vspace{0.2cm}

\item[\emph{(iii)}] $\mathrm{diam}(G)\geq3$ and $\mathrm{diam}(H)\geq3$;

\vspace{0.2cm}

\item[\emph{(iv)}] there exist two sets $D_{G}=\{g_{1},g_{2},g_{3}\}\subseteq
V(G)$ and $D_{H}=\{h_{1},h_{2},h_{3}\}\subseteq V(H)$ such that $a_{G}%
(I,D_{G})+a_{H}(I^{c},D_{H})\leq1$ for every $I\subseteq\lbrack3].$
\end{itemize}
\end{theorem}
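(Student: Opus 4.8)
The plan is to prove both directions using Proposition \ref{nodom} as the central tool, since it translates "$D$ is a dominating set in $G\diamond H$" into the combinatorial condition that for every $I\subseteq[k]$ there is no vertex $(g,h)$ with $N_G[g]\cap\{g_1,\dots,g_k\}=\{g_i:i\in I\}$ and $N_H[h]\cap\{h_1,\dots,h_k\}=\{h_i:i\in[k]\setminus I\}$. In the notation introduced just before the theorem, for a candidate $\gamma(G\diamond H)$-set $D=\{(g_i,h_i):i\in[k]\}$ with projections $D_G$, $D_H$, this says exactly that $a_G(I,D_G)+a_H(I^c,D_H)\le 1$ for every $I\subseteq[k]$. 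So the theorem is really a restatement of Proposition \ref{nodom} in the case $k=3$, sharpened by the observation that several special structural hypotheses on the factors automatically produce such a pair $(D_G,D_H)$.

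For the \emph{sufficiency} direction, I would handle the four conditions separately. Condition (i): if $\gamma(G)+\gamma(H)=4$ then, together with $\gamma(G\diamond H)\ge 3$, Theorem \ref{Cor_basicUpper} gives $\gamma(G\diamond H)\le \gamma(G)+\gamma(H)-1=3$, hence equality. Condition (ii): if $\bar\gamma(G)=3$ (or $\bar\gamma(H)=3$), Proposition \ref{SDCTD} gives $\gamma(G\diamond H)\le 3$, hence equality. Condition (iii): this is precisely Proposition \ref{diameter3} combined with the hypothesis $\gamma(G\diamond H)\ge 3$. Condition (iv): if such $D_G=\{g_1,g_2,g_3\}$ and $D_H=\{h_1,h_2,h_3\}$ exist, then $D=\{(g_1,h_1),(g_2,h_2),(g_3,h_3)\}$ is a dominating set in $G\diamond H$ of size $3$ by Proposition \ref{nodom}, since $a_G(I,D_G)+a_H(I^c,D_H)\le 1$ is exactly the negation of "there is a vertex $(g,h)$ with $N_G[g]\cap D_G=\{g_i:i\in I\}$ and $N_H[h]\cap D_H=\{h_i:i\in I^c\}$"; combined with $\gamma(G\diamond H)\ge 3$ we get equality. (One should note a minor subtlety: in (iv) the ordering of $D_G$ and $D_H$ matters only through the pairing; one may always re-index so that $g_i$ is paired with $h_i$.)

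For the \emph{necessity} direction, suppose $\gamma(G\diamond H)=3$ and fix a $\gamma(G\diamond H)$-set $D=\{(g_1,h_1),(g_2,h_2),(g_3,h_3)\}$. By Proposition \ref{nodom} this already gives two sets $D_G=\mathrm{proj}_G(D)$, $D_H=\mathrm{proj}_H(D)$ (each of size at most $3$) with the property in (iv) — but only if $|D_G|=|D_H|=3$. The case analysis is therefore driven by the possible collisions among the $g_i$'s and the $h_i$'s. If $|D_G|=3$ and $|D_H|=3$, we are in case (iv) directly (pairing $g_i\leftrightarrow h_i$). If $|D_G|\le 2$, then by Corollary \ref{Cor_sufficient}, applied to every $I$, one shows $D_G$ is a dominating set of $G$, so $\gamma(G)\le 2$; symmetrically if $|D_H|\le 2$ then $\gamma(H)\le 2$ or $D_H$ totally dominates $\overline H$. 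Pushing this: if $|D_G|\le 2$ and $|D_H|\le 2$ one expects to land in (i) (using $\gamma(G\diamond H)\ge 3$ forces $\gamma(G)+\gamma(H)\ge 4$, and the upper bound of Theorem \ref{Cor_basicUpper} together with the collision structure forces $\gamma(G)+\gamma(H)=4$). If exactly one projection has a collision, say $|D_G|\le 2<|D_H|=3$, one analyzes when the three distinct $g$-values still manage to dominate: here Corollary \ref{Cor_sufficient} forces either $D_G$ dominates $G$ (driving toward small $\gamma(G)$, hence (i)) or $D_H$ totally dominates $\overline H$, i.e. $\mathrm{diam}(H)\ge 3$ pieces appear and one aims for (iii) or (ii) (an SDCTD-type set of size $3$). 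Throughout, the fact that $G,H$ satisfy Corollary \ref{biggertwo} (no ECD set of size $2$, $\gamma(G)+\gamma(H)\ge 4$) rules out the degenerate subcases.

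The main obstacle will be the necessity direction's case $|D_G|=|D_H|=3$ "escaping" into one of (i)–(iii): a priori (iv) is the only thing guaranteed, and one must argue that (iv) is genuinely a separate possibility and that the other cases are exhausted only when collisions occur — equivalently, that the theorem's four conditions are jointly exhaustive. Concretely, the delicate bookkeeping is in the mixed case (one projection of size $2$, the other of size $3$): one has to track, over all $8$ choices of $I\subseteq[3]$, which of the sets $A_G(I,D_G)$ and $A_H(I^c,D_H)$ can be nonempty, use $\gamma(G\diamond H)\ge 3$ (so no $2$-element sub-multiset of $D$ dominates), and deduce a clean statement about $\gamma(G)$, $\bar\gamma(G)$ or $\mathrm{diam}(G)$. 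I expect this to be the longest part of the argument; the rest is a direct translation through Proposition \ref{nodom}.
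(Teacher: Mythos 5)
Your sufficiency direction is sound and essentially matches the paper: (ii) and (iii) are quoted from Proposition \ref{SDCTD} and Proposition \ref{diameter3}, and your treatment of (i) via Theorem \ref{Cor_basicUpper} and of (iv) via Proposition \ref{nodom} are clean shortcuts that are logically equivalent to the paper's (the paper instead invokes Corollary \ref{univ}/Proposition \ref{2} for (i) and does an explicit case analysis on $|N_G[g]\cap D_G|$ for (iv)). Likewise, your dispatch of the necessity direction's ``all coordinates distinct'' case to (iv) through Proposition \ref{nodom} is correct and is exactly the paper's Case 4.

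The genuine gap is the rest of the necessity direction: the collision cases are only announced, not proved, and the sketch points in partly wrong directions. First, your step ``if $|D_G|\le 2$, then by Corollary \ref{Cor_sufficient} \dots $D_G$ is a dominating set of $G$'' does not follow: that corollary yields a disjunction ($D_G$ dominates $G$ \emph{or} $D_H$ totally dominates $\overline{H}$), and the whole difficulty is precisely in handling the second branch. Second, your prediction that the case $|D_G|=|D_H|=2$ (the L-shaped set $\{(g_1,h_1),(g_1,h_2),(g_2,h_1)\}$) ``lands in (i)'' is false in general: the paper's Case 2 shows that when neither $\{g_1,g_2\}$ dominates $G$ nor $\{h_1,h_2\}$ dominates $H$, one must instead derive $d_H(h_1,h_2)\ge 3$ and $d_G(g_1,g_2)\ge 3$ (using that a common neighbour would produce an undominated vertex, and that an ECD set of size $2$ is excluded by Corollary \ref{biggertwo}), arriving at (iii), not (i). Third, the mixed cases ($|D_H|\in\{1,2\}$, $|D_G|=3$) require the substantive arguments of the paper's Cases 1 and 3: showing that $D_G$ is an SDCTD set of $G$ when $H$ has no universal vertex (giving $\bar\gamma(G)=3$, i.e.\ (ii)), and, when $\bar\gamma(G)\ge 4$, splitting on whether $D_G$ dominates $G$ to reach either (i) or, after several contradiction subcases, (iii). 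None of this bookkeeping is present, and you yourself flag it as the expected hard part; as written, the necessity direction is an outline rather than a proof.
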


\begin{proof}
Let us first show that each of the conditions (i)-(iv) implies $\gamma
(G\diamond H)=3.$

\vspace{0.2cm}

\noindent(i) If $\gamma(G)+\gamma(H)=4$, then the result follows either by
Corollary \ref{univ}, when $G$ or $H$ has a universal vertex, or by
Proposition \ref{2}.

\vspace{0.2cm}

\noindent(ii) If $\bar{\gamma}(G)=3$ or $\bar{\gamma}(H)=3$, then Proposition
\ref{SDCTD} implies $\gamma(G\diamond H)\leq3$.

\vspace{0.2cm}

\noindent(iii) If $\mathrm{diam}(G)\geq3$ and $\mathrm{diam}(H)\geq3$, then
Proposition \ref{diameter3} implies $\gamma(G\diamond H)\leq3.$

\vspace{0.2cm}

\noindent(iv) Assume that there exist two sets $D_{G}=\{g_{1},g_{2}%
,g_{3}\}\subseteq V(G)$ and $D_{H}=\{h_{1},h_{2},h_{3}\}\subseteq V(H)$ such
that $a_{G}(I,D_{G})+a_{H}(I^{c},D_{H})\leq1$ for every $I\subseteq\lbrack3],$
and let
\[
D=\{(g_{1},h_{1}),(g_{2},h_{2}),(g_{3},h_{3})\}.
\]
It is sufficient to show that $D$ is a dominating set in $G\diamond H.$ Let
$(g,h)$ be any vertex in $G\diamond H,$ we wish to show that $(g,h)$ is
dominated by at least one vertex of $D.$

Let us first assume $g\in N_{G}[g_{i}]$ for every $i\in\lbrack3].$ This
implies that $g\in A_{G}(I,D_{G})$ for $I=[3],$ i.e $a_{G}(I,D_{G})=1.$ Since
$a_{G}(I,D_{G})+a_{H}(I^{c},D_{H})\leq1,$ this further implies $A_{H}%
(I^{c},D_{H})=\emptyset.$ Hence, there exists $h_{i}\in D_{H}$ such that $h\in
N_{G}[h_{i}],$ so Observation \ref{Obs_adjacency} implies that $(g,h)$ is
dominated by $(g_{i},h_{i})\in D.$

Let us next assume that $g\not \in N_{G}[g_{i}]$ for precisely one
$i\in\lbrack3],$ say $g\not \in N_{G}[g_{3}].$ This means that $g\in
A_{G}(I,D_{G})$ for $I=\{1,2\},$ i.e. $a_{G}(I,D_{G})=1.$ We conclude that
$a_{H}(I^{c},D_{H})=0,$ which implies that either $h\not \in N_{G}[h_{3}]$ in
which case $(g,h)$ is dominated by $(g_{3},h_{3}),$ or $h\in N_{G}[g_{1}]\cup
N_{G}[g_{2}]$ which implies that $(g,h)$ is dominated by at least one of the
vertices $(g_{1},h_{1})$ and $(g_{2},h_{2}).$

Let us further assume that $g\in N_{G}[g_{i}]$ for precisely one $i\in
\lbrack3],$ say $g\in N_{G}[g_{3}].$ This means that $g\in A_{G}(I,D_{G})$ for
$I=\{3\},$ i.e. $a_{G}(I,D_{G})=1.$ Hence, it must hold $a_{H}(I^{c}%
,D_{H})=0,$ which implies that $h\in N_{H}[h_{3}]$ or $h\not \in N_{G}[h_{1}]$
or $h\not \in N_{G}[h_{2}],$ which means that $(g,h)$ is dominated by
$(g_{3},h_{3})$ or $(g_{1},h_{1})$ or $(g_{2},h_{2}),$ respectively.

Let us finally assume $g\not \in N_{G}[g_{i}]$ for every $i\in\lbrack3].$ This
implies that $g\in A_{G}(I,D_{G})$ for $I=\emptyset,$ i.e $a_{G}(I,D_{G})=1.$
Since $a_{G}(I,D_{G})+a_{H}(I^{c},D_{H})\leq1,$ this further implies
$A_{H}(I^{c},D_{H})=\emptyset.$ Hence, there exists $h_{i}\in D_{H}$ such that
$h\not \in N_{G}[h_{i}],$ so $(g,h)$ is dominated by $(g_{i},h_{i})\in D.$

We have established that $(g,h)$ is dominated by $D$ in all the possible
cases, so $D$ is a dominating set in $G\diamond H$ which means $\gamma
(G\diamond H)\leq3.$ As the assumption of the theorem is $\gamma(G\diamond
H)\geq3,$ we conclude that $\gamma(G\diamond H)=3$ which establishes the claim.

\medskip

Let us now prove the other direction of the equivalence, i.e. now we assume
that $\gamma(G\diamond H)=3$. By Corollary \ref{biggertwo}, it holds that
$\gamma(G)+\gamma(H)\geq4$ and neither $G$ nor $H$ have an ECD set of size
$2$. By symmetry, we have four possibilities for the minimum dominating set:

\vspace{0.2cm}

\begin{itemize}
\item[(1)] $D_{1}=\{(g_{1},h_{1}),(g_{2},h_{1}),(g_{3},h_{1})\}$;

\item[(2)] $D_{2}=\{(g_{1},h_{1}),(g_{1},h_{2}),(g_{2},h_{1})\}$;

\item[(3)] $D_{3}=\{(g_{1},h_{1}),(g_{2},h_{1}),(g_{3},h_{2})\}$;

\item[(4)] $D_{4}=\{(g_{1},h_{1}),(g_{2},h_{2}),(g_{3},h_{3})\}$.

\vspace{0.2cm}
\end{itemize}

\noindent\textbf{Case 1:} $D_{1}$\emph{ dominates }$G\diamond H$\emph{.} For
every vertex $g\in V(G)$, the vertex $(g,h_{1})$ must be dominated by $D_{1}$,
so $\{g_{1},g_{2},g_{3}\}$ must be a dominating set in G, i.e. $\gamma
(G)\leq3$. Let us now consider the graph $H,$ and let us first assume that
$\gamma(H)=1.$ Notice that Corollary \ref{biggertwo} implies $\gamma(G)=3.$ We
conclude that $\gamma(G)+\gamma(H)=4,$ so we obtained (i).

Let us next assume that $\gamma(H)>1,$ which implies that $H$ does not contain
a universal vertex. Hence, there must exist $h\not \in N_{H}[h_{1}].$ Let
$g\in V(G)$ be any vertex of $G.$ Since $D_{1}$ dominates $(g,h)$ and $h\notin
N_{H}[h_{1}],$ Observation \ref{Obs_adjacency} implies there exists
$i\in\lbrack3]$ such that $g\not \in N_{G}[g_{i}].$ This means that $g$ is
dominated by $g_{i}$ in $\overline{G},$ so $\{g_{1},g_{2},g_{3}\}$ is a total
dominating set in $\overline{G}.$ We conclude that $\{g_{1},g_{2},g_{3}\}$ is
a SDCTD set in $G,$ which implies $\bar{\gamma}(G)\leq3.$ As we assumed that
$\gamma(G\diamond H)\geq3,$ Corollary \ref{SDCTD} implies $\bar{\gamma}%
(G)\geq3,$ so $\bar{\gamma}(G)=3$ and we obtained (ii).

\vspace{0.2cm}

\noindent\textbf{Case 2:} $D_{2}$\emph{ dominates }$G\diamond H$\emph{.} We
suppose first that $\{g_{1},g_{2}\}$ is not a dominating set in $G$. This
implies that there exists $g\notin N_{G}[g_{1}]\cup N_{G}[g_{2}]$. Since
$(g,h_{1})$ is dominated by $D_{2},$ Observation \ref{Obs_adjacency} implies
$h_{1}\not \in N_{H}[h_{2}]$ which means $d_{H}(h_{1},h_{2})\geq2.$ Moreover,
if $d_{H}(h_{1},h_{2})=2,$ then there exists $h\in N_{H}[h_{1}]\cap
N_{H}[h_{2}],$ in which case Observation \ref{Obs_adjacency} implies that
$(g,h)$ is not dominated by $D_{2}$, a contradiction. We conclude that
$d_{H}(h_{1},h_{2})\geq3,$ so $\mathrm{diam}(H)\geq3.$

Let us now consider the graph $H$ and the set $\{h_{1},h_{2}\}.$ If
$\{h_{1},h_{2}\}$ is not a dominating set in $H,$ then by the same argument we
have $\mathrm{diam}(G)\geq3$ and we obtained (iii). On the other hand, if
$\{h_{1},h_{2}\}$ is a dominating set in $H,$ Corrolary \ref{biggertwo}
implies that $\{h_{1},h_{2}\}$ is not an ECD set, which means there exists
$h\in N_{H}[h_{1}]\cap N_{H}[h_{2}].$ This implies $d_{H}(h_{1},h_{2})\leq2,$
a contradiction.

We suppose next that $\{g_{1},g_{2}\}$ is a dominating set in $G$. If
$\{h_{1},h_{2}\}$ is not a dominating set in $H,$ then by the symmetric
argument to the one in the previous two paragraphs we have the claim, so we
may assume that $\{h_{1},h_{2}\}$ is a dominating set in $H.$ Hence,
$\gamma(G)=2=\gamma(H)$ and we have (i).

\vspace{0.2cm}

\noindent\textbf{Case 3:} $D_{3}$\emph{ dominates }$G\diamond H$\emph{.}
Assume first that $\bar{\gamma}(G)\leq3.$ Notice that Corollary \ref{SDCTD}
implies $\bar{\gamma}(G)=3,$ so we obtained (iii). Assume next that
$\bar{\gamma}(G)\geq4$ and we distinguish the following two cases.

\vspace{0.2cm}

\noindent\textbf{Case 3.1:} \emph{The set }$D_{G}=\{g_{1},g_{2},g_{3}\}$\emph{
is a dominating set in }$G$\emph{. }From $\bar{\gamma}(G)\geq4$ it follows
that $\{g_{1},g_{2},g_{3}\}$ is not a SDCTD set, so there exists $g\in V(G)$
such that $g\in N_{G}[g_{i}]$ for every $i\in\lbrack3].$ Let $h$ be any vertex
of $H.$ Since $(g,h)$ is dominated by $D_{3},$ Observation \ref{Obs_adjacency}
implies $h\in N_{G}[h_{i}]$ for at least one $i\in\lbrack2].$ We conclude that
$D_{H}=\{h_{1},h_{2}\}$ is a dominating set in $H,$ so $\gamma(H)\leq2.$ Given
that $\gamma(G)\leq3,$ Corollary \ref{biggertwo} implies that either
$\gamma(G)+\gamma(H)=4$ or $\gamma(G)=3$ and $\gamma(H)=2.$ If $\gamma
(G)+\gamma(H)=4,$ we obtain (i), so let us assume $\gamma(G)=3$ and
$\gamma(H)=2.$ Notice that for $I=\{3\}$ there must exist $g\in A_{G}%
(I,D_{G}),$ otherwise $\{g_{1},g_{2}\}$ would be a dominating set in $G,$
which contradicts $\gamma(G)=3.$ Similarly, there must exist $h\in
A_{H}(\{1\},D_{H}),$ otherwise $\{h_{2}\}$ would be a dominating set in $H,$
which contradicts $\gamma(H)=2.$ Now, Observation \ref{Obs_adjacency} implies
$(g,h)$ is not dominated by $D_{3},$ a contradiction.

\vspace{0.2cm}

\noindent\textbf{Case 3.2:} \emph{The set }$D_{G}=\{g_{1},g_{2},g_{3}\}$\emph{
is not a dominating set in }$G.$ Let $g\notin N_{G}[D_{G}]$. If $d_{H}%
(h_{1},h_{2})\leq2,$ there exists $h\in N_{H}[h_{1}]\cap N_{H}[h_{2}],$ so
Observation \ref{Obs_adjacency} implies $(g,h)$ is not dominated by $D_{3},$ a
contradiction. Hence, let us assume that $d_{H}(h_{1},h_{2})\geq3,$ which
implies $\mathrm{diam}(H)\geq3.$ If $D_{H}=\{h_{1},h_{2}\}$ is a dominating
set in $H,$ Corollary \ref{Prop_case3} implies $\gamma(G\diamond H)\leq
\gamma(H)=2,$ a contradiction. So, let us assume that $D_{H}=\{h_{1},h_{2}\}$
is not a dominating set in $H$ and let $h\not \in N_{H}[D_{H}].$ If there
exists $g\in N_{G}[g_{1}]\cap N_{G}[g_{2}]\cap N_{G}[g_{3}],$ Observation
\ref{Obs_adjacency} implies that $(g,h)$ is not dominated by $D_{3}$, a
contradiction. So, let us assume that $N_{G}[g_{1}]\cap N_{G}[g_{2}]\cap
N_{G}[g_{3}]=\emptyset.$ Recall that $d_{H}(h_{1},h_{2})\geq3,$ which implies
$h_{1}\not \in N_{G}[h_{2}]$ and $h_{2}\not \in N_{G}[h_{1}].$ If for
$I=\{3\}$ there exists $g\in A_{G}(I,D_{G}),$ then Observation
\ref{Obs_adjacency} implies that $(g,h_{1})$ is not dominated by $D_{3}$, a
contradiction. Similarly, if there exists $g\in N_{G}[g_{1}]\cap N_{G}%
[g_{2}],$ then $g\not \in N_{G}[g_{3}],$ so $(g,h_{2})$ is not dominated by
$D_{3},$ a contradiction. We conclude that for $I=\{3\}$ it holds that
$N_{G}[g_{1}]\cap N_{G}[g_{2}]=\emptyset$ and $d_{G}(g_{1},g_{2})\geq3,$ so we
have obtained (iii).

\vspace{0.2cm}

\noindent\textbf{Case 4:} $D_{4}$\emph{ dominates }$G\diamond H$\emph{.} Let
$D_{G}=\{g_{1},g_{2},g_{3}\}$ and $D_{H}=\{h_{1},h_{2},h_{3}\}.$ Assume first
$I=[3],$ so $I^{c}=\emptyset.$ If $a_{G}(I,D_{G})=1,$ this means that there
exists $g\in A_{G}(I,D).$ For such a vertex $g,$ it holds that $g\in
N_{G}[g_{1}]\cap N_{G}[g_{2}]\cap N_{G}[g_{3}].$ For any vertex $h\in V(H),$
it holds that $(g,h)$ is dominated by $D_{4}$ in $G\diamond H.$ Hence,
Observation \ref{Obs_adjacency} implies that there exists $h_{i}\in D_{H}$
such that $h\in N_{H}[h_{i}].$ This implies $h\not \in A_{H}(I^{c},D_{H}),$ so
$A_{H}(I^{c},D_{H})=\emptyset$ which means $a_{H}(I^{c},D_{H})=0.$ We conclude
that $a_{G}(I,D_{G})+a_{H}(I^{c},D_{H})\leq1,$ so we obtained (iv).

Assume next that $I\subseteq\lbrack3]$ such that $\left\vert I\right\vert =2,$
say $I=\{1,2\}.$ Then $I^{c}=\{3\}.$ Again, $a_{G}(I,D_{G})=1$ implies that
there exists $g\in V(G)$ such that $g\in(N_{G}[g_{1}]\cap N_{G}[g_{2}%
])\backslash N_{G}[g_{3}].$ If $a_{H}(I^{c},D_{H})=1,$ then there exists $h\in
N_{H}[h_{3}]\backslash(N_{H}[h_{1}]\cup N_{H}[h_{2}]),$ so according to
Observation \ref{Obs_adjacency} the vertex $(g,h)$ is not dominated by
$D_{4},$ a contradiction. Hence, it must hold that $a_{H}(I^{c},D_{H})=0$, so
we again obtained (iv).

The proof for $I\subseteq\lbrack3]$ with cardinality $1$ or $0$ is obtained as
the mirror condition of the previous two cases.

\vspace{0.2cm}

Hence, in each of the cases we obtained that at least one of the condition
(i)-(iv) holds, so we are done.
\end{proof}

Notice that Theorem \ref{dom3} and Corollary \ref{biggertwo} together yield a
characterization of all graphs $G$ and $H$ such that $\gamma(G\diamond H)=3.$

Let us consider the bound $\gamma(G\diamond H)\leq\gamma(G)+\gamma(H)-1$ given
in Theorem \ref{Cor_basicUpper}, and notice that all pairs of graphs described
in Proposition \ref{one} and Theorem \ref{two}.(i) are sharp for this bound.
This also holds for many pairs that satisfies Theorem \ref{dom3}, but not for
all as can be seen from $P_{9}\diamond P_{9}$ for instance.

Beside that we present another example for sharpness of the bound
$\gamma(G\diamond H)\leq\gamma(G)+\gamma(H)-1$ that is not covered by above
mentioned results. Let $G$ be the graph on the left in Figure \ref{Pictire01},
and let $H_{k}=K_{2k}-M$ where $M$ is a perfect matching and $k\geq3$ ($H_{3}$
is the right graph of Figure \ref{Pictire01}). It is clear that $G$ and
$H_{k}$ satisfy the conditions given in Corollary \ref{biggertwo}, so
$\gamma(G\diamond H_{k})\geq3$. They do not satisfy (i), (ii), and (iii) in
Theorem \ref{dom3}, so let us consider the condition (iv) in Theorem
\ref{dom3}.

Since $H_{k}=K_{2k}-M$ where $M$ is a perfect matching, it follows that any
set $D_{H}=\{h_{1},h_{2},h_{3}\}\subseteq V(H_{k})$ which consists of
precisely three vertices satisfies $N_{H_{k}}[h_{1}]\cap N_{H_{k}}[h_{2}]\cap
N_{H_{k}}[h_{3}]\neq\emptyset,$ which means $a_{H}([3],D_{H})=1.$ The
condition (iv) of Theorem \ref{dom3} would then imply $a_{G}(\emptyset
,D_{G})=0$ for $D_{G}=\{g_{1},g_{2},g_{3}\},$ i.e. that $D_{G}$ is a
dominating set in $G.$ The only set of three vertices in $G$ which is a
dominating set consists of the black vertices shown in Figure \ref{Pictire01}.
For such a set $D_{G},$ it holds that $A_{G}(I,D_{G})\not =\emptyset$ for
every $I\subseteq\lbrack3]$ with $\left\vert I\right\vert =1.$ Also, for any
set $D_{H}=\{h_{1},h_{2},h_{3}\}\subseteq V(H_{k})$ a subset $I\subseteq
\lbrack3]$ with $\left\vert I\right\vert =1$ can be chosen so that
$A_{H}(I^{c},D_{H})\not =\emptyset.$ Hence, for such a set $I\subseteq
\lbrack3]$ we have $a_{G}(I,D_{G})+a_{H}(I,D_{H})=2,$ a contradiction. We
conclude that the condition (iv) is also not satisfied for $G\diamond H_{k},$
so Theorem \ref{dom3} implies $\gamma(G\diamond H_{k})\geq4$. Finally, since
$\gamma(G\diamond H_{k})\leq\gamma(G)+\gamma(H)-1=4$, we have the equality in
Theorem \ref{Cor_basicUpper}.

\begin{figure}[h]
\centering\includegraphics[scale=0.35]{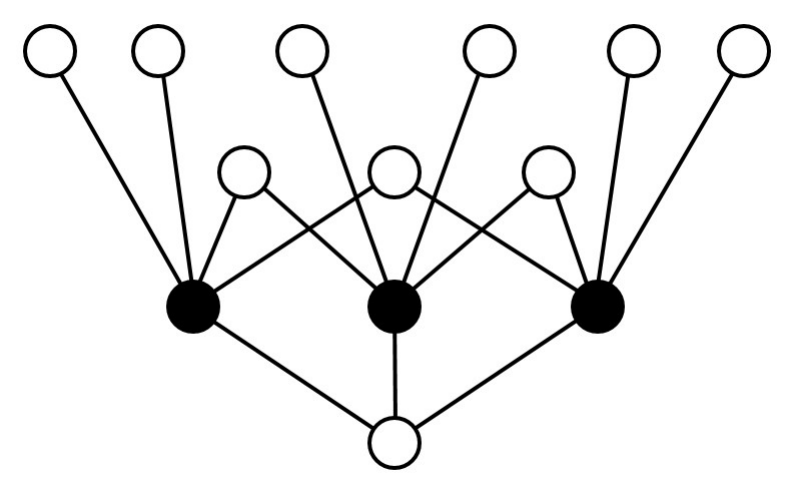}\hspace{1cm}%
\includegraphics[scale=0.35]{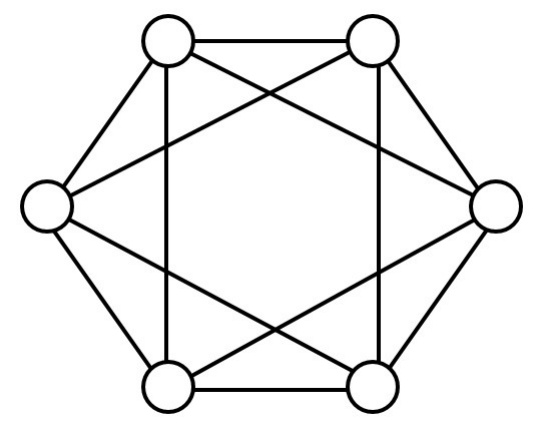}\caption{Graphs $G$ and $H$ such that
$\gamma(G\diamond H)=3+2-1=\gamma(G)+\gamma(H)-1$.}%
\label{Pictire01}%
\end{figure}

To finish this work, we show some conditions to have the domination number of
the modular product smaller than 6. We start with the condition on $G$ and $H$
under which $\gamma(G\diamond H)\leq4$.

\begin{proposition}
\label{4} Let $G$ be a graph such that $\mathrm{diam}(G)\geq3$, and let $H$ be
a graph which contains two adjacent vertices $h_{1},h_{2}$ such that
$N_{H}(h_{1})\cap N_{H}(h_{2})=\emptyset$. If $N_{H}(h_{1})\cup N_{H}%
(h_{2})\neq V(H)$, then $\gamma(G\diamond H)\leq4$.
\end{proposition}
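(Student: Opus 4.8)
The plan is to exhibit an explicit dominating set of $G\diamond H$ of cardinality at most four and to check domination by means of Observation~\ref{Obs_adjacency}. First I would fix the ingredients. Since $\mathrm{diam}(G)\geq 3$, pick $g_{1},g_{2}\in V(G)$ with $d_{G}(g_{1},g_{2})\geq 3$; then $N_{G}[g_{1}]\cap N_{G}[g_{2}]=\emptyset$, since a vertex lying in both closed neighborhoods would give $d_{G}(g_{1},g_{2})\leq 2$. Next, the hypothesis $N_{H}(h_{1})\cup N_{H}(h_{2})\neq V(H)$ lets me choose $h_{0}\in V(H)\setminus(N_{H}(h_{1})\cup N_{H}(h_{2}))$. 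Because $h_{1}h_{2}\in E(H)$ we have $h_{1}\in N_{H}(h_{2})$ and $h_{2}\in N_{H}(h_{1})$, hence $h_{0}\notin\{h_{1},h_{2}\}$, and since non‑adjacency is symmetric this gives $h_{1},h_{2}\notin N_{H}[h_{0}]$. Finally, the remaining hypothesis $N_{H}(h_{1})\cap N_{H}(h_{2})=\emptyset$ together with $h_{1}h_{2}\in E(H)$ yields $N_{H}[h_{1}]\cap N_{H}[h_{2}]=\{h_{1},h_{2}\}$, and combining this with the previous sentence we obtain the crucial identity $N_{H}[h_{0}]\cap N_{H}[h_{1}]\cap N_{H}[h_{2}]=\emptyset$.

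Then I would claim that
\[
D=\{(g_{1},h_{0}),(g_{2},h_{0}),(g_{1},h_{1}),(g_{2},h_{2})\}
\]
is a dominating set of $G\diamond H$, verifying this for an arbitrary $(g,h)$ by splitting into three cases according to $g$. If $g\in N_{G}[g_{1}]$ (so $g\notin N_{G}[g_{2}]$), then by Observation~\ref{Obs_adjacency} the vertex $(g_{1},h_{0})$ dominates $(g,h)$ when $h\in N_{H}[h_{0}]$, while $(g_{2},h_{0})$ dominates $(g,h)$ when $h\notin N_{H}[h_{0}]$; so $(g,h)$ is dominated. The case $g\in N_{G}[g_{2}]$ is symmetric. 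If $g\notin N_{G}[g_{1}]\cup N_{G}[g_{2}]$, then by Observation~\ref{Obs_adjacency} a vertex $(g_{i},h_{\ast})$ of $D$ dominates $(g,h)$ precisely when $h\notin N_{H}[h_{\ast}]$; hence $(g,h)$ fails to be dominated only if $h\in N_{H}[h_{0}]\cap N_{H}[h_{1}]\cap N_{H}[h_{2}]$, which is empty by the last identity of the previous paragraph. Thus $D$ dominates $G\diamond H$, and since $g_{1}\neq g_{2}$ and $h_{0}\notin\{h_{1},h_{2}\}$ we have $|D|\leq 4$, so $\gamma(G\diamond H)\leq 4$.

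The one point requiring care is the choice of $D$: the naive grid $\{(g_{1},h_{1}),(g_{1},h_{2}),(g_{2},h_{1}),(g_{2},h_{2})\}$ does \emph{not} work in general, since a vertex $(g,h_{1})$ (or $(g,h_{2})$) with $g\notin N_{G}[g_{1}]\cup N_{G}[g_{2}]$ can be undominated by it. Using the ``$h_{0}$‑column'' $(g_{1},h_{0}),(g_{2},h_{0})$ — which by itself already handles every $(g,h)$ with $g\in N_{G}[g_{1}]\cup N_{G}[g_{2}]$ — together with the ``diagonal'' $(g_{1},h_{1}),(g_{2},h_{2})$ is exactly what repairs this, and the whole verification then reduces to the two identities $N_{G}[g_{1}]\cap N_{G}[g_{2}]=\emptyset$ and $N_{H}[h_{0}]\cap N_{H}[h_{1}]\cap N_{H}[h_{2}]=\emptyset$.
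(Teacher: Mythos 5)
Your proof is correct, but it takes a genuinely different route from the paper. The paper's proof exhibits the five-element set $\{(g_{1},h_{1}),(g_{1},h_{2}),(g_{2},h_{1}),(g_{2},h_{2}),(g_{1},h_{3})\}$ (with $h_{3}$ playing the role of your $h_{0}$), checks domination case by case, and then invokes Lemma~\ref{red} to delete $(g_{2},h_{2})$ and bring the cardinality down to four; your observation that the bare grid $\{(g_{1},h_{1}),(g_{1},h_{2}),(g_{2},h_{1}),(g_{2},h_{2})\}$ fails on vertices $(g,h_{1})$ with $g\notin N_{G}[g_{1}]\cup N_{G}[g_{2}]$ is precisely why the paper needs the extra vertex $(g_{1},h_{3})$. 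You instead build a four-element set $\{(g_{1},h_{0}),(g_{2},h_{0}),(g_{1},h_{1}),(g_{2},h_{2})\}$ directly and verify domination from just the two identities $N_{G}[g_{1}]\cap N_{G}[g_{2}]=\emptyset$ and $N_{H}[h_{0}]\cap N_{H}[h_{1}]\cap N_{H}[h_{2}]=\emptyset$, so Lemma~\ref{red} is not needed at all. What your version buys is a self-contained and slightly more general argument: the adjacency of $h_{1},h_{2}$ and the disjointness of their open neighborhoods are used only to produce three vertices of $H$ whose closed neighborhoods have empty common intersection, so your construction in fact shows that $\mathrm{diam}(G)\geq3$ together with the existence of $h_{0},h_{1},h_{2}\in V(H)$ with $N_{H}[h_{0}]\cap N_{H}[h_{1}]\cap N_{H}[h_{2}]=\emptyset$ already gives $\gamma(G\diamond H)\leq4$. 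What the paper's version buys is reuse of Lemma~\ref{red}, which it also employs in Propositions~\ref{diameter3}, \ref{2} and \ref{Prop_Izrok}, keeping all of these proofs in a uniform style.
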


\begin{proof}
Since $\mathrm{diam}(G)\geq3$, we can take two vertices $g_{1},g_{2}\in V(G)$
such that $d_{G}(g_{1},g_{2})=3$. We take $h_{3}\in V(H)\setminus(N_{H}%
(h_{1})\cup N_{H}(h_{2}))$ and let us see that
\[
D=\{(g_{1},h_{1}),(g_{1},h_{2}),(g_{2},h_{1}),(g_{2},h_{2}),(g_{1},h_{3})\}
\]
is a dominating set in $G\diamond H$. Let $(g,h)$ be any vertex in $G\diamond
H$. If $g\notin\{g_{1},g_{2}\}$ and $h\notin\{h_{1},h_{2}\}$, since
$N_{G}(g_{1})\cap N_{G}(g_{2})=\emptyset=N_{H}(h_{1})\cap N_{H}(h_{2})$, then
$(g,h)$ is dominated by $\{(g_{1},h_{1}),(g_{1},h_{2}),(g_{2},h_{1}%
),(g_{2},h_{2})\}$.

Next we suppose that $g\in\{g_{1},g_{2}\}$. If $(g,h)=(g_{1},h)$ (or
$(g,h)=(g_{2},h)$) and $h\notin N_{H}[h_{1}]$, then $(g,h)$ is adjacent to
$(g_{2},h_{1})$ (or $(g_{1},h_{1})$). If $(g,h)=(g_{1},h)$ (or $(g,h)=(g_{2}%
,h)$) and $h\in N_{H}(h_{1})$, then $(g,h)$ is adjacent to $(g_{1},h_{1})$ (or
$(g_{2},h_{1})$). Finally, let $h\in\{h_{1},h_{2}\}$. If $g\in N_{G}[g_{1}]$
(or $g\in N_{G}[g_{2}]$), then $(g,h)$ is equal or adjacent to $(g_{1},h)$ (or
to $(g_{2},h)$). If $g\notin N_{G}[g_{1}]\cup N_{G}[g_{2}]$, then $(g,h)$ is
adjacent to $(g_{1},h_{3})$. By Lemma \ref{red}, we get the result.
\end{proof}

\vspace{0.2cm}

There are many graphs satisfying the equality in the last proposition. We
present two families of products with Petersen graph as one factor. We use the
same notation as presented after Corollary \ref{examples}.

\begin{proposition}
If $k\geq10$, then
\[
\gamma(P_{k}\diamond P)=\gamma(C_{k}\diamond P)=4.
\]

\end{proposition}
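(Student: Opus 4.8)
The plan is to get the upper bound from results already established and to exclude the value $3$ by Theorem~\ref{dom3}. For the upper bound, recall that $\bar{\gamma}(P)=4$ (computed in the discussion of the Petersen graph above), so Proposition~\ref{SDCTD} gives $\gamma(P_k\diamond P)\le\bar{\gamma}(P)=4$ and $\gamma(C_k\diamond P)\le 4$; this is also recorded in Corollary~\ref{peter}(iv). For the lower bound, write $G$ for $P_k$ or $C_k$. Since $k\ge 10$ we have $\gamma(G)=\lceil k/3\rceil\ge 4$, while $\gamma(P)=3$ and $\mathrm{diam}(P)=2$, so $\gamma_{t}(\overline{P})\ge 3$ by Proposition~\ref{total2}; hence Theorem~\ref{Lobound} yields $\gamma(G\diamond P)\ge\min\{\gamma(G),\gamma_{t}(\overline{P})\}\ge 3$.

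Now assume for contradiction that $\gamma(G\diamond P)=3$ and apply Theorem~\ref{dom3}. Conditions (i)--(iii) fail at once: $\gamma(G)+\gamma(P)\ge 7\neq 4$; both $\bar{\gamma}(P)=4$ and $\bar{\gamma}(G)\ge\gamma(G)\ge 4$ (every SDCTD set dominates $G$), so neither is $3$; and $\mathrm{diam}(P)=2<3$. Hence (iv) must hold, so there are $D_G=\{g_1,g_2,g_3\}\subseteq V(G)$ and $D_H=\{h_1,h_2,h_3\}\subseteq V(P)$ with $a_G(I,D_G)+a_H(I^c,D_H)\le 1$ for every $I\subseteq[3]$. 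The task is to contradict this by a careful choice of the sets $I$.

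Since $|D_G|=3<\gamma(G)$, the set $D_G$ is not dominating in $G$, so $a_G(\emptyset,D_G)=1$; taking $I=\emptyset$ forces $a_H([3],D_H)=0$, that is $N_P[h_1]\cap N_P[h_2]\cap N_P[h_3]=\emptyset$. As $\mathrm{diam}(P)=2$, any two distinct vertices of $P$ have a common vertex in their closed neighborhoods, so $N_P[h_i]\cap N_P[h_j]\neq\emptyset$ for each pair $\{i,j\}$; combined with the empty triple intersection this gives $A_P(\{i,j\},D_H)=N_P[h_i]\cap N_P[h_j]\neq\emptyset$, hence $a_H(\{i,j\},D_H)=1$ for all three pairs. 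Taking $I=\{l\}$ in (iv) then forces $a_G(\{l\},D_G)=0$, i.e.\ $A_G(\{l\},D_G)=\emptyset$, for every $l\in[3]$; in particular $g_l$ must lie in some $A_G(I,D_G)$ with $|I|\ge 2$, which means $g_l$ is adjacent in $G$ to another vertex of $D_G$. Since $G$ is triangle-free, $g_1,g_2,g_3$ cannot be pairwise adjacent, so the subgraph they induce, having minimum degree at least one, is a path; hence $D_G$ consists of three consecutive vertices of $G$, whose middle vertex $g_m$ lies in $N_G[g_1]\cap N_G[g_2]\cap N_G[g_3]$. Therefore $a_G([3],D_G)=1$, and taking $I=[3]$ forces $a_H(\emptyset,D_H)=0$, so $D_H$ is a dominating set of $P$ of size $\gamma(P)=3$; since the only $\gamma(P)$-sets of the Petersen graph are the open neighborhoods $N_P(x)$, we get $D_H=N_P(x)$ for some $x$, whence $x\in N_P[h_1]\cap N_P[h_2]\cap N_P[h_3]$ — contradicting the first step. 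Thus $\gamma(G\diamond P)\neq 3$, and with $\gamma(G\diamond P)\le 4$ we conclude $\gamma(G\diamond P)=4$ for both $G=P_k$ and $G=C_k$.

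The only genuinely delicate point is orchestrating the choices of $I$ in condition (iv): $I=\emptyset$ pins the triple intersection of the $N_P[h_i]$ down to $\emptyset$; because $\mathrm{diam}(P)=2$, this automatically makes all three ``pair'' sets $A_P(\{i,j\},D_H)$ nonempty; consequently $I=\{l\}$ kills every ``singleton'' set $A_G(\{l\},D_G)$, which forces $D_G$ to be three consecutive vertices; and then $I=[3]$ forces $D_H$ to dominate $P$, contradicting the first step. Everything else is routine, and the only facts used are $\gamma(P)=3$, $\mathrm{diam}(P)=2$, $\bar{\gamma}(P)=4$, and uniqueness of the minimum dominating sets of $P$ (all available above), together with $\gamma(P_k)=\gamma(C_k)=\lceil k/3\rceil$ and triangle-freeness of $P_k$ and $C_k$.
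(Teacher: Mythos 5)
Your proof is correct, and it shares the paper's overall skeleton: the upper bound $\gamma(G\diamond P)\leq 4$ from $\bar{\gamma}(P)=4$ via Proposition~\ref{SDCTD} (Corollary~\ref{peter}(iv)), followed by ruling out the value $3$ by showing that none of conditions (i)--(iv) of Theorem~\ref{dom3} can hold. The treatment of (i)--(iii) is identical. Where you genuinely diverge is in refuting condition (iv). The paper first deduces $a_{P}([3],D_{P})=0$, then argues that on a long path some vertex is dominated by exactly one of the three chosen vertices, so $a_{P_k}(I,D_{P_k})=1$ for some singleton $I$, forcing $a_{P}(I^{c},D_{P})=0$; it then relies on an ``easily verified'' case analysis inside the Petersen graph showing that such a $D_{P}$ must induce a path of length two, whose middle vertex contradicts $a_{P}([3],D_{P})=0$. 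You instead exploit $\mathrm{diam}(P)=2$: once the triple intersection $N_{P}[h_1]\cap N_{P}[h_2]\cap N_{P}[h_3]$ is empty, all three pairwise sets $A_{P}(\{i,j\},D_{H})$ are automatically nonempty, which kills every singleton set $A_{G}(\{l\},D_{G})$; triangle-freeness of $P_k$ and $C_k$ then forces $D_{G}$ to be three consecutive vertices, so $a_{G}([3],D_{G})=1$, hence $D_{H}$ must dominate $P$, contradicting the uniqueness of $\gamma(P)$-sets. In effect you transfer the structural case analysis from the Petersen factor (where the paper leaves it unverified in print) to the path/cycle factor (where it is immediate), which makes the argument more self-contained and slightly more general: beyond the stated facts about $P$, it uses only $\gamma(G)\geq 4$ and triangle-freeness of $G$.
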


\begin{proof}
By Proposition \ref{4} or Corollary \ref{peter}.(iv) we know that
$\gamma(P_{k}\diamond P)\leq4$. Corollary \ref{biggertwo} implies
$\gamma(P_{k}\diamond P)\geq3.$ We suppose to the contrary that $\gamma
(P_{k}\diamond P)=3$. Hence, $P_{k}$ and $P$ must satisfy at least one
condition of Theorem \ref{dom3}. Since $\gamma(P_{k})\geq4$ and $\gamma(P)=3,$
the condition (i) is not satisfied. Further, $\gamma(P_{k})\geq4$ implies
$\bar{\gamma}(P_{k})\geq4.$ As for the Petersen graph, every $\gamma(P)$-set
$D$ of $P$ consists of the three neighbors of a vertex $v\in V(P).$ This
implies that $v$ is not dominated by $D$ in $\overline{P},$ so $\bar{\gamma
}(P)\geq4.$ We conclude that the condition (ii) is not satisfied either. Since
$\mathrm{diam}(P)=2,$ the condition (iii) is not satisfied. We conclude that
the condition (iv) of Theorem \ref{dom3} must hold.

Let $D_{P_{k}}=\{g_{1},g_{2},g_{3}\}\subseteq V(P_{k})$ and $D_{P}%
=\{h_{1},h_{2},h_{3}\}\subseteq V(P)$ such that $a_{P_{k}}(I,D_{P_{k}}%
)+a_{P}(I^{c},D_{P})\leq1$ for every $I\subseteq\lbrack3].$ If $a_{P}%
([3],D_{P})=1,$ then $a_{P_{k}}(\emptyset,D_{P_{k}})=0,$ which implies that
$D_{P_{k}}$ is a dominating set in $P_{k},$ a contradiction with $k\geq10$.
Hence, it must hold that $a_{P}([3],D_{P})=0$. This implies that $D_{P}$ is
not a dominating set in $P,$ as every dominating set of $P$ consists of a
three neighbors of a same vertex $v\in V(P),$ so it would hold $v\in
A_{P}([3],D_{P}).$

Now, since $P_{k}$ is a path on $k\geq10$ vertices, it follows that there
exists at least one $I\subseteq\lbrack3]$ with $\left\vert I\right\vert =1,$
such that $a_{P_{k}}(I,D_{P_{k}})=1.$ This implies that $a_{P}(I^{c}%
,D_{P})=0.$ It is easily verified that every $D_{P}$ which is not a dominating
set in $G$ and satisfies $a_{P}(I^{c},D_{P})=0$ induces a path of the length
two in $P.$ Let $h$ be the internal vertex of such an induced path of the
length two in $P.$ Then $h\in N_{P}[h_{i}]$ for every $i\in\lbrack3],$ which
implies $a_{P}([3],D_{P})=1,$ a contradiction.

Hence no condition of Theorem \ref{dom3} is fulfilled, which means that
$\gamma(P_{k}\diamond P)\geq4$ and the equality follows. The same proof works
for $C_{k}\diamond P$.
\end{proof}

After we have given the example of graphs attaining the upper bound of
Proposition \ref{4}, we present a condition on graphs $G$ and $H$ under which
$\gamma(G\diamond H)\leq5$.

\begin{proposition}
\label{Prop_Izrok}Let $G$ and $H$ be two graphs such that they contains two
adjacent vertices $g_{1},g_{2}$ and $h_{1},h_{2}$, respectively, such that
$N_{G}(g_{1})\cap N_{G}(g_{2})=\emptyset$ and $N_{H}(h_{1})\cap N_{H}%
(h_{2})=\emptyset$. If $N_{G}(g_{1})\cup N_{G}(g_{2})\neq V(G)$ and
$N_{H}(h_{1})\cup N_{H}(h_{2})\neq V(H)$, then $\gamma(G\diamond H)\leq5$.
\end{proposition}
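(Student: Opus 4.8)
The plan is to produce an explicit dominating set of $G\diamond H$ of size $5$. Since $g_1g_2\in E(G)$, both $g_1$ and $g_2$ belong to $N_G(g_1)\cup N_G(g_2)$, so the hypothesis $N_G(g_1)\cup N_G(g_2)\neq V(G)$ supplies a vertex $g_3\in V(G)\setminus(N_G[g_1]\cup N_G[g_2])$; the mirror argument in $H$ supplies $h_3\in V(H)\setminus(N_H[h_1]\cup N_H[h_2])$. I would first show that the six-vertex set $D=\{(g_1,h_1),(g_1,h_2),(g_2,h_1),(g_2,h_2),(g_3,h_1),(g_1,h_3)\}$ is a dominating set of $G\diamond H$, and then delete $(g_2,h_2)$: since $g_1\neq g_2$ and $h_1\neq h_2$, Lemma \ref{red} gives $N_{G\diamond H}[D]=N_{G\diamond H}[D\setminus\{(g_2,h_2)\}]$, so $D\setminus\{(g_2,h_2)\}$ is a dominating set of size $5$ (its five vertices are pairwise distinct because $g_3\notin\{g_1,g_2\}$ and $h_3\notin\{h_1,h_2\}$), whence $\gamma(G\diamond H)\leq5$.

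To verify that $D$ dominates $G\diamond H$, I would fix an arbitrary vertex $(g,h)$ and argue via Observation \ref{Obs_adjacency}, distinguishing cases according to the position of $g$ relative to $N_G[g_1]\cup N_G[g_2]$ and of $h$ relative to $N_H[h_1]\cup N_H[h_2]$. The two ``aligned'' cases are immediate: if $g\in N_G[g_i]$ and $h\in N_H[h_j]$ for some $i,j\in[2]$ then $(g_i,h_j)\in D$ dominates $(g,h)$, and if $g\notin N_G[g_1]\cup N_G[g_2]$ and $h\notin N_H[h_1]\cup N_H[h_2]$ then $(g_1,h_1)\in D$ dominates $(g,h)$ by a co-direct edge. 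The substantive part is the two ``misaligned'' cases, and this is exactly where $N_G(g_1)\cap N_G(g_2)=\emptyset$ (and its mirror) is used. Suppose $g\in N_G[g_1]\cup N_G[g_2]$ but $h\notin N_H[h_1]\cup N_H[h_2]$. If $g\notin N_G[g_i]$ for some $i\in[2]$, then $(g_i,h_1)\in D$ dominates $(g,h)$ by a co-direct edge (using $h\notin N_H[h_1]$); otherwise $g\in N_G[g_1]\cap N_G[g_2]$, which by $N_G(g_1)\cap N_G(g_2)=\emptyset$ forces $g\in\{g_1,g_2\}$, hence $g\notin N_G[g_3]$ and $(g_3,h_1)\in D$ dominates $(g,h)$ by a co-direct edge. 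The remaining case $g\notin N_G[g_1]\cup N_G[g_2]$ and $h\in N_H[h_1]\cup N_H[h_2]$ is symmetric: if $h\notin N_H[h_j]$ for some $j\in[2]$ use $(g_1,h_j)\in D$, and if $h\in N_H[h_1]\cap N_H[h_2]$ (so $h\in\{h_1,h_2\}$ and $h\notin N_H[h_3]$) use $(g_1,h_3)\in D$.

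I expect the main (and only) difficulty to be purely organizational: keeping the misaligned cases straight and confirming in each branch both that the needed vertex of $D$ is really present and that the co-direct adjacency requirement---namely $g\notin N_G[\cdot]$ and $h\notin N_H[\cdot]$ holding at the same time---is met. No deeper obstacle is anticipated, since the closed-neighbourhood computations $g_3\notin N_G[g_1]\cup N_G[g_2]\Rightarrow g_i\notin N_G[g_3]$ for $i\in[2]$ (and the analogous statement for $h_3$) are routine.
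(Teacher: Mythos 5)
Your proposal is correct and follows essentially the same route as the paper: both construct the six-vertex set consisting of the four ``corner'' vertices $\{g_1,g_2\}\times\{h_1,h_2\}$ together with two escape vertices built from $g_3\notin N_G[g_1]\cup N_G[g_2]$ and $h_3\notin N_H[h_1]\cup N_H[h_2]$, verify domination by the same case split, and then invoke Lemma \ref{red} to delete $(g_2,h_2)$. The only difference is the immaterial choice of $(g_1,h_3)$ in place of the paper's $(g_2,h_3)$.
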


\begin{proof}
We take $g_{3}\in V(G)\setminus(N_{G}(g_{1})\cup N_{G}(g_{2}))$ and $h_{3}\in
V(H)\setminus(N_{H}(h_{1})\cup N_{H}(h_{2}))$ and let us see that
\[
D^{\prime}=\{(g_{1},h_{1}),(g_{1},h_{2}),(g_{2},h_{1}),(g_{2},h_{2}%
),(g_{2},h_{3}),(g_{3},h_{1})\}
\]
is a dominating set in $G\diamond H$. Let $(g,h)$ any vertex in $G\diamond H$.
Firstly, if $g\notin\{g_{1},g_{2}\}$ and $h\notin\{h_{1},h_{2}\}$, since
$N_{G}(g_{1})\cap N_{G}(g_{2})=\emptyset=N_{H}(h_{1})\cap N_{H}(h_{2})$, then
$(g,h)$ is dominated by $\{(g_{1},h_{1}),(g_{1},h_{2}),(g_{2},h_{1}%
),(g_{2},h_{2})\}$. If $(g,h)=(g_{1},h)$ (or $(g,h)=(g_{2},h)$) and $h\notin
N_{H}(h_{1})$, then $(g,h)$ is adjacent to $(g_{3},h_{1})$. If $h\in
N_{H}(h_{1})$, then $(g,h)$ is adjacent to $(g_{1},h_{1})$ (or $(g_{2},h_{1}%
)$). Finally, if $(g,h)=(g,h_{1})$ and $g\in N_{G}(g_{1})$ (or $g\in
N_{G}(g_{2})$), then $(g,h_{1})$ is adjacent to $(g_{1},h_{1})$ (or
$(g_{2},h_{1})$). If $g\notin N_{G}(g_{1})\cup N_{G}(g_{2})$, then $(g,h_{1})$
(or $(g,h_{2})$) is adjacent to $(g_{2},h_{3})$. Now, by Lemma \ref{red}, we
get the result.
\end{proof}

\section{Conclusions and further work}

In this work we study the domination number of the modular product $G\diamond
H$ of two graphs $G$ and $H.$ First, several upper and lower bounds for
$\gamma(G\diamond H)$ in terms of $\gamma(G),$ $\gamma(H)$, $\gamma
_{t}(\overline{G})$ and $\gamma_{t}(\overline{H})$ are established. These
results are useful for characterizing graphs $G$ and $H$ with $\gamma
(G\diamond H)=k$ for every $k\in[3]$ and we completely characterize such
graphs. Several classes of graphs are provided for which the domination number
of the modular product is bounded, and also two classes of graphs for which it
is unbounded. In the last section, we give several conditions on $G$ and $H$
under which $\gamma(G\diamond H)$ is at most $4$ and $5.$ For the upper bound
$\gamma(G\diamond H)\leq5$ of {Proposition \ref{Prop_Izrok}}, we do not have a
pair of graphs attaining it, so we leave it as an open problem.

\begin{problem}
Find{ a pair of graphs $G$ and $H$ satisfying conditions of Proposition
\ref{Prop_Izrok} for which }$\gamma(G\diamond H)=5.$
\end{problem}

Another question of our interest is to determine the magnitude of
$\gamma(G\diamond H)$ when the diameter of both $G$ and $H$ equals two. Recall
the upper bound $\gamma(G\diamond H)\leq\gamma(G)+\gamma(H)-1$ from Theorem
\ref{Cor_basicUpper}. Corollary \ref{Kn} implies that the upper bound reduces
to $\gamma(G\diamond H)\leq\gamma(G)$ if $H$ is of diameter $1$ (the complete
graph), Corollary \ref{Cor_13} implies that it reduces to $\gamma(G\diamond
H)\leq\gamma(G)+2$ if $G$ is of diameter at least $3$, and Corollary
\ref{diam5} reduces to $\gamma(G\diamond H)\leq\gamma(G)$ if $G$ is of
diameter at least $5$. Even more, if both graphs are of diameter at least $3,$
it again reduces to $\gamma(G\diamond H)\leq3$ according to Proposition
\ref{diameter3}. Hence, the question is whether the bound can be similarly
reduced in the case when both $G$ and $H$ are of diameter $2.$ In some
specific situations, such as when $H$ has a universal vertex the bound is
reduced to $\gamma(G\diamond H)\leq\gamma(G)$ according to Corollary
\ref{univ}. However, for the general case when both $G$ and $H$ are graphs of
diameter $2$ we only have the bound of Theorem \ref{Cor_basicUpper}.

\begin{problem}
Find the magnitude of $\gamma(G\diamond H)$ in terms of $\gamma(G)$ and
$\gamma(H)$ when these two graphs are of diameter $2.$
\end{problem}

To simplify, one can consider a special case of the above problem when $G=H$.
Notice that by Theorem \ref{Cor_basicUpper} we have
\[
\min\{\gamma(G),\gamma_{t}(\overline{G})\}\leq\gamma(G\diamond G)\leq
\min\{2\gamma(G)-1,2\gamma_{t}(\overline{G})-1\}.
\]
We have considered this special case and all graphs we have are with the
property $\gamma(G\diamond G)\leq\gamma(G)+1.$ Mind, this can be misleading as
we only considered small graphs. Hence, we are inclined to propose the
following problem.

\begin{problem}
Find{ graphs $G$ satisfying }$\gamma(G\diamond G)\geq\gamma(G)+2.$
\end{problem}

\noindent By the results of this paper, observe that if such a graph exists it
must be of the diameter 2.

As an useful notion in this paper we introduced a simultaneously dominating
and complement total dominating set (SDCTD set for short) of $G$ defined as
any set $D$ which is at a same time a dominating set of $G$ and a total
dominating set of $\overline{G}.$ The minimum cardinality of SDCTD set of $G$
is denoted by $\bar{\gamma}(G)$ and is called the SDCTD number of $G$. We
believe this invariant could be of an independent interest. The ILP model for
determining a smallest SDCTD set $D$ of $G$ is given by%
\[%
\begin{tabular}
[c]{ll}
& $\min\sum\limits_{v\in V(G)}x_{v}\medskip$\\
s.t. & $\sum\limits_{w\in N_{G}(v)}x_{w}+x_{v}\geq1$ for any $v\in V(G),$\\
& $\sum\limits_{w\in N_{\overline{G}}(v)}x_{w}\geq1$ for any $v\in V(G),$\\
& $x_{v}\in\{0,1\}$ for any $v\in V(G),$%
\end{tabular}
\ \ \ \ \
\]
and then $D=\{v\in V(G):x_{v}=1\}$. The behaviour of this invariant to other
graph products such as Cartesian product would be interesting to research.

\bigskip

\bigskip\noindent\textbf{Acknowledgments.}~~S. B. was partly supported by Plan
Nacional I+D+I Grant PID2021-127842NB-I00 and PID2022-139543OB-C41. I. P. was
partially supported by the Slovenian Research Agency program No. P1-0297. J.
S. and R. \v{S}. have been partially supported by the Slovenian Research
Agency ARRS program P1-0383 and ARRS project J1-3002, J. S. also acknowledges
the support of Project KK.01.1.1.02.0027, a project co-financed by the
Croatian Government and the European Union through the European Regional
Development Fund - the Competitiveness and Cohesion Operational Program.

\bigskip\noindent\textbf{Declaration of interests.}~~The authors declare that
they have no known competing financial interests or personal relationships
that could have appeared to influence the work reported in this paper.

\bigskip\noindent\textbf{Data availability.}~~Our manuscript has no associated data.

\end{document}